\documentclass[11pt,a4paper,leqno]{amsart}

\usepackage[latin1]{inputenc}
\usepackage[T1]{fontenc}
\usepackage{amsfonts}
\usepackage{amsmath}
\usepackage{amssymb}
\usepackage{eurosym}
\usepackage{mathrsfs}
\usepackage{palatino}
\usepackage{color}
\usepackage{esint}
\usepackage{url}
\usepackage{verbatim}
\usepackage{enumerate}
\usepackage[dvipsnames]{xcolor}

\usepackage{caption,float}
\usepackage{placeins}
\usepackage{afterpage}

\usepackage{subfig}

\usepackage[pagebackref,hypertexnames=false, colorlinks, citecolor=RoyalPurple, linkcolor=RoyalPurple, urlcolor=RoyalPurple]{hyperref}
\usepackage{amsrefs}
%RoyalPurple
%BlueViolet
%Fuchsia
%JungleGreen
%RedViolet
%WildStrawberry
\newfloat{fig}{tb}{lop}[section]
\DeclareCaptionLabelFormat{Ucase}{Figure~#2}
\captionsetup[fig]{labelformat=Ucase,labelsep=colon}

%\usepackage{graphicx}
%\graphicspath{ {./pics_pbc/} }

\newcommand{\R}{\mathbb{R}}
\newcommand{\C}{\mathbb{C}}

\newcommand{\N}{\mathbb{N}}

\newcommand{\scrS}{\mathscr{S}}

\newcommand{\calK}{\mathcal{K}}

\newcommand{\calQ}{\mathcal{Q}}

\newcommand{\calR}{\mathcal{R}}

\newcommand{\calO}{\mathcal{O}}

\newcommand{\scrD}{\mathscr{D}}

\newcommand{\diam}{\operatorname{diam}}

\newcommand{\SIO}{\textup{SIO}}

\newcommand{\CZO}{\textup{CZO}}

\numberwithin{equation}{section}

%% Operations of analysis %%%%%%%%%%%%%%%%%%%%%%%
\newcommand{\ud}[0]{\,\mathrm{d}}

\newcommand{\dist}[0]{\operatorname{dist}}

\newcommand{\op}[1]{\operatorname{#1}}

%% Norms %%%%%%%%%%%%%%%%%%%%%%%%%%%%%%%%%%%%%%%%
\newcommand{\abs}[1]{|#1|}
\newcommand{\babs}[1]{\big|#1\big|}

\newcommand{\Norm}[2]{\|#1\|_{#2}}
\newcommand{\bNorm}[2]{\big\|#1\big\|_{#2}}
\newcommand{\BNorm}[2]{\Big\|#1\Big\|_{#2}}

\newcommand{\ave}[1]{\langle #1\rangle}
\newcommand{\bave}[1]{\big\langle #1\big\rangle}
\newcommand{\Bave}[1]{\Big\langle #1\Big\rangle}

%% fonts
\newcommand{\script}[1]{\mathscr{#1}}

%% Topology and linear operators %%%%%%%%%%%%%%%

\newcommand{\BMO}[0]{\operatorname{BMO}}

\newcommand{\VMO}[0]{\operatorname{VMO}}

\newcommand{\supp}[0]{\operatorname{spt}}
%{\mathcal{C}ar}
\newcommand{\loc}[0]{\operatorname{loc}}

%% Numerical operations %%%%%%%%%%%%%%%%%%%%%%%%%%

%% Probability and R-boundedness %%%%%%%%%%%%%%%%%%

%% Dyadic rectangles %%%%%%%%%%%%%%%%%%%%%%%%%%%%%%%%z

\newcommand{\ch}[0]{\operatorname{ch}}

\newcommand{\wt}[1]{{\widetilde{#1}}}

\swapnumbers
\theoremstyle{plain}
\newtheorem{thm}[equation]{Theorem}
\newtheorem{lem}[equation]{Lemma}
\newtheorem{prop}[equation]{Proposition}
\newtheorem{cor}[equation]{Corollary}

\theoremstyle{definition}
\newtheorem{defn}[equation]{Definition}

\theoremstyle{remark}

\newtheorem{rem}[equation]{Remark}

\pagestyle{headings}

\addtolength{\hoffset}{-1.15cm}
\addtolength{\textwidth}{2.3cm}
\addtolength{\voffset}{0.45cm}
\addtolength{\textheight}{-0.9cm}

\setcounter{tocdepth}{2}
\setcounter{secnumdepth}{4}

 \title{Fractional Bloom boundedness and compactness of commutators}

\author{Tuomas Hyt\"{o}nen \and Tuomas Oikari \and Jaakko Sinko}

\address{Department of Mathematics and Statistics, P.O.B. 68 (Pietari Kalmin katu 5), FI-00014 University of Helsinki, Finland}
\email{\{tuomas.hytonen,tuomas.v.oikari,jaakko.sinko\}@helsinki.fi}

\makeatletter
\@namedef{subjclassname@2020}{%
	\textup{2020} Mathematics Subject Classification}
\makeatother

\subjclass[2020]{42B20}
\keywords{singular integrals, commutators, boundedness, compactness, two-weight setting}
\thanks{The authors were supported by the Academy of Finland through project Nos.\ 314829 (all), 336323 (Sinko) and 346314 (Hyt\"onen), as well as by the Jenny and Antti Wihuri Foundation (Oikari).}

\thispagestyle{empty}

\begin{document}
\begin{abstract} Let $T$ be a non-degenerate Calder\'on-Zygmund operator 
%	of either variable or rough kernel 
	and let $b:\R^d\to\C$ be locally integrable.  Let $1<p\leq q<\infty$ and let $\mu^p\in A_p$ and $\lambda^q\in A_q,$ where $A_{p}$ denotes the usual class of Muckenhoupt weights. 
We
% provide an extension of the qualitative characterizations of both the boundedness and compactness of commutators to the fractional Bloom setting and 
 show that 
	\begin{align*}
		\Norm{[b,T]}{L^p_{\mu}\to L^q_{\lambda}}\sim \Norm{b}{\BMO_{\nu}^{\alpha}},\qquad 	[b,T]\in \calK(L^p_{\mu}, L^q_{\lambda})\quad\mbox{iff}\quad b\in \VMO_{\nu}^{\alpha},
	\end{align*}
where $L^p_\mu=L^p(\mu^p)$ and $\alpha/d = 1/p-1/q$, the symbol $\calK$ stands for the class of compact operators between the given spaces, and the fractional weighted $\BMO_{\nu}^{\alpha}$ and $\VMO_{\nu}^{\alpha}$ spaces are defined through the following fractional oscillation and Bloom weight
	\begin{align*}
		\calO_{\nu}^{\alpha}(b;Q) = \nu(Q)^{-\alpha/d}\Big(\frac{1}{\nu(Q)}\int_Q\abs{b-\ave{b}_Q}\Big),\qquad \nu 
%		= \big(\frac{\mu}{\lambda}\big)^{\frac{1}{1+\alpha/d}} 
		=  \big(\frac{\mu}{\lambda}\big)^{\beta},\quad \beta = (1+\alpha/d)^{-1}.
	\end{align*}
The key novelty is dealing with the off-diagonal range $p<q$, whereas the case $p=q$ was previously studied by Lacey and Li. However, another novelty in both cases is that our approach allows complex-valued functions $b$, while other arguments based on the median of $b$ on a set are inherently real-valued.
%Additionally, we prove the weighted fractional John-Nirenberg inequality for both $\BMO_{\nu}^{\alpha}$ and $\VMO_{\nu}^{\alpha}$, discuss the validity of $\CMO_{\nu}^{\alpha}(\R^d) = \VMO_{\nu}^{\alpha}(\R^d)$ for data $\mu,\lambda,\nu,d,\alpha,$ and give a new proof of the fact that $\CMO^{\alpha}=\VMO^{\alpha}.$
% when $\alpha\in(0,1),$ thus streamlining parts of classical arguments.
%extending results of Guo et al \cite{GuoHeWuYang2021} and Lacey, Li \cite{LacLi2021}.
\end{abstract}

\maketitle
%	\tableofcontents
\section{Introduction and background}

%\subsection{Singular integrals and commutators} A wide range of important operators of analysis can be written as singular integrals in the form 
%\begin{align}\label{eq:SIO}
%	Tf(x) = \int K(x,y),\qquad x\not\in\supp(f).
%\end{align} Given a singular integral $T$ and a locally integrable symbol $b,$ their commutators is defined pointwise as 
%\begin{align}\label{eq:commutator}
%	[b,T]f = bTf - T(bf).
%\end{align}

\subsection{Unweighted commutator theory}
The study of commutators of singular integrals have their roots in the work of Nehari \cite{Nehari1957}, where the boundedness of the commutator of the Hilbert transform and a multiplication by $b$ (the symbol of the commutator)
was characterized through a connection with Hankel operators. Later, in 1976, Coifman, Rochberg and Weiss \cite{CRW} developed real-analytic methods and extended Nehari's result by providing the following commutator lower- and upper bounds \footnote{For notation see Section \ref{sect:notation} below.}
\begin{equation}\label{eq:commutatorbmo}
	\|b\|_{\BMO(\R^d)} \lesssim \sum_{j=1}^d\|[b,\calR_j]\|_{L^p(\R^d) \to L^p(\R^d)} \lesssim \|b\|_{\BMO(\R^d)},
\end{equation}
where $p\in(1,\infty),$ $\BMO = \BMO^0,$ and we denote
\begin{align}\label{defn:BMO}
	\|b\|_{\BMO^{\alpha}} :=\sup_{Q\in\calQ} \calO^{\alpha}(b;Q),\qquad \calO^{\alpha}(b;Q) = \ell(Q)^{-\alpha}\fint_Q  \abs{b-\ave{b}_Q},
\end{align}
where $\calQ$ stands for the collection of all cubes. When $\alpha = 0,$ we drop the superscript. The upper bound \eqref{eq:commutatorbmo} was proved in \cite{CRW} for a wide class of bounded singular integrals with convolution kernels, while the lower bound especially concerned the Riesz transforms
\begin{align*}
	\calR_jf(x) = p.v.\int_{\R^d}\frac{x_j-y_j}{\abs{x-y}^{d+1}}f(y)\ud y.
\end{align*} 
The lower bound in \eqref{eq:commutatorbmo} was improved separately by both Janson \cite{Janson1978} and Uchiyama \cite{Uch1978} to $\|b\|_{\BMO} \lesssim \Norm{[b,T]}{{L^p(\R^d) \to L^p(\R^d)}}$ for a wider class of singular integrals that includes any single Riesz transform (in contrast with \eqref{eq:commutatorbmo} involving all the $d$ Riesz transforms). Janson's proof also gives the following off-diagonal extension of the boundedness of the commutator in terms of the homogeneous Hölder space; when $1<p<q<\infty,$ there holds that
\begin{align}\label{eq:commutatoralpha}
	\Norm{[b,T]}{L^p(\R^d)\to L^q(\R^d)}\sim \Norm{b}{\dot{C}^{0,\alpha}(\R^d)}:=\sup_{x\not=y}\frac{\abs{b(x)-b(y)}}{\abs{x-y}^{\alpha}}, \qquad \frac{\alpha}{d} = \frac{1}{p}-\frac{1}{q}.
\end{align}
When $\alpha>0,$ an elementary argument shows that $\Norm{b}{\dot{C}^{0,\alpha}}\sim 	\|b\|_{\BMO^{\alpha}},$ and
hence, for $1<p\leq q<\infty,$ we have a unified characterization of the boundedness of the commutator in terms of a local oscillatory testing condition on the symbol of the commutator.
The second off-diagonal case $1<q<p<\infty$ was characterised by one of us \cite{HyLpLq} through a global oscillatory condition as
\begin{align}\label{eq:commutatorS}
	\Norm{[b,T]}{L^p\to L^q}\sim \Norm{b}{\dot L^r} := \inf_{c\in\C}\Norm{b-c}{L^r},\quad \frac{1}{q} = \frac{1}{r} + \frac{1}{p}.
\end{align}
The commutator lower bounds of \eqref{eq:commutatorbmo}, \eqref{eq:commutatoralpha} and \eqref{eq:commutatorS} are currently available by two different methods, the first being the approximate weak factorization (awf) argument, the second being the median method. The advantage of both is that they provide a uniform approach to all of the three lower bounds in \eqref{eq:commutatorbmo}, \eqref{eq:commutatoralpha} and \eqref{eq:commutatorS}. As discovered in \cite{HyLpLq}, at their common core lies a minimal notion of non-degeneracy of kernels of singular integrals. Let us next briefly recall the appropriate definitions, thus also fixing the operators of interest in this article.

\begin{defn}\label{defn:SIO:var}
	A singular integral operator (SIO) $T$ is a linear operator $T:\mathcal{S}\to L^1_{\loc}$ on the class of Schwartz functions that has the off-support representation
	\begin{align*}
		Tf(x) = \int_{\R^d} K(x,y)f(y)\ud y,\qquad x\not\in\supp(f),
	\end{align*}
	where the kernel $K:\R^d\times\R^d\setminus  \{x=y\} \to\C$ satisfies the size estimate
	\begin{align}\label{kernel:size}
		\abs{K(x,y)}\leq C\abs{x-y}^{-d},
	\end{align}
	and the smoothness estimates
	\begin{align*}
		\abs{K(x',y)- K(x,y)} + \abs{K(y,x')- K(y,x)}\leq \omega\Big(\frac{\abs{x-x'}}{\abs{x-y}}\Big)\abs{x-y}^{-d}, 
	\end{align*}
whenever $\abs{x-x'}\leq \frac{1}{2}\abs{x-y},$ and 
	where the modulus $\omega$ is increasing and sub-additive, and satisfies  $\omega(0)=0$ and 
	$$\Norm{\omega}{\op{Dini}} = \int_0^1 \omega(t)\frac{\ud t}{t} < \infty.$$
\end{defn}

%\begin{defn}\label{defn:SIO:rough}
%	Given $\Omega\in L^1(\mathbb{S}^{d-1};\C)$ with $\int_{\mathbb{S}^{d-1}}\Omega = 0,$ a rough singular integral operator $T = T_{\Omega}$ associated to $\Omega$ is the linear operator (defined a priori for example on Schwartz functions) given by the formula
%	\begin{align*}
%		Tf(x) = p.v.\int_{\R^d} K_{\Omega}(x-y)f(y)\ud y,\qquad K_{\Omega}(z) = \frac{\Omega(z/\abs{z})}{\abs{z}^d}.
%	\end{align*}
%\end{defn}
%\begin{rem} 
%	It is easy to see that rough SIOs are well-defined on the class of Schwartz functions if and only if $\int_{\mathbb{S}^{d-1}}\Omega = 0.$
%	Indeed, if $\int_{\mathbb{S}^{d-1}}\Omega\not = 0,$ then $T_{\Omega}1_B(x) = \infty$ for any $x\in B$ and we are lead to a pointwise unbounded operator.
%\end{rem}
\begin{defn} A Calderón-Zygmund operator (CZO) is a singular integral operator that is bounded on $L^2(\R^d).$  	We also denote
	\[
	\Norm{T}{\CZO} = \Norm{T}{L^2\to L^2} + \Norm{\omega}{\op{Dini}} + C,
	\]
	where $C$ is the smallest admissible constant in \eqref{kernel:size}.
\end{defn} 

\begin{defn}\label{defn:nondeg} 
%	\begin{itemize}
%		\item A variable kernel SIO as in Definition \ref{defn:SIO:var} is said to be non-degenerate, if for all $y$ and $r>0$ there exists $x$ such that 
%		$$\abs{x-y}\geq r,\qquad \abs{K(x,y)}\gtrsim r^{-d}.$$
%		\item A rough kernel $K_{\Omega}$ as in Definition \ref{defn:SIO:rough}
%		is said to be non-degenerate if $\Omega$ is not almost everywhere identically zero.	
%		\item An SIO is said to be non-degenerate if its kernel is non-degenerate, and a CZO is said to be non-degenerate if it is a non-degenerate SIO.
%	\end{itemize}
A kernel $K(x,y)$ is said to be non-degenerate, if for all $y$ and $r>0$ there exists $x$ such that 
		$$\abs{x-y}\geq r,\qquad \abs{K(x,y)}\gtrsim r^{-d}.$$
%
%A rough kernel $K_{\Omega}$ as in Definition \ref{defn:SIO:rough}
%is said to be non-degenerate if $\Omega$ is not almost everywhere identically zero.	
\end{defn}

\begin{defn}
	An SIO is said to be non-degenerate if its kernel is non-degenerate, and a CZO is said to be non-degenerate if it is a non-degenerate SIO.
\end{defn}

This notion of non-degeneracy was introduced in \cite{HyLpLq} (Definition 2.1.1). It generalizes similar notions in the earlier literature, in particular one due to Stein \cite{MR1232192} (IV.4.6). See \cite{HyLpLq}, Remark 2.1.2 and Examples 2.1.3 to 2.1.6, for a detailed comparison of these notions of non-degeneracy.

It is typical that commutator upper bounds are valid for all Calder\'{o}n-Zygmund operators, while the lower bounds require some non-degeneracy from the singular integral.
We gather everything from the above into the following theorem that fully characterizes, for $p,q\in(1,\infty),$ the boundedness of the commutator between unweighted spaces.
\begin{thm}[\cites{CRW, Janson1978, HyLpLq}]\label{thm:theory1} Let $1<p,q<\infty,$ $b\in L^1_{\loc}(\R^d;\C)$ and $T$ be a non-degenerate Calderón-Zygmund operator. Then, there holds that 
	\begin{align*}
		\Norm{[b,T]}{L^p(\R^d)\to L^q(\R^d)} \sim	
		\begin{cases}
			\Norm{b}{\BMO(\R^d)}, & q = p,\quad (1976)\quad \text{\cite{CRW}}, \\
			\Norm{b}{\dot C^{0,\alpha}(\R^d)},\quad \frac{\alpha}{d} =\frac{1}{p}-\frac{1}{q}, & q>p,\quad (1978) \quad \text{\cite{Janson1978}}, \\
			\Norm{b}{\dot{L}^s(\R^d)},\quad \frac{1}{q} = \frac{1}{s}+\frac{1}{p},  & q<p,\quad  (2021)\quad \text{\cite{HyLpLq}}.
		\end{cases}
	\end{align*}
\end{thm}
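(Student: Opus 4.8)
The plan is to prove each equivalence by separately establishing an upper bound $\Norm{[b,T]}{L^p\to L^q}\lesssim\Norm{b}{X}$, valid for every Calder\'on--Zygmund operator, and a matching lower bound $\Norm{b}{X}\lesssim\Norm{[b,T]}{L^p\to L^q}$ that uses only the non-degeneracy of the kernel; here $X$ is $\BMO$, $\dot{C}^{0,\alpha}$, or $\dot L^s$ according as $q=p$, $q>p$, or $q<p$. The three upper bounds are the routine part. When $q>p$ the commutator kernel $(b(x)-b(y))K(x,y)$ obeys $\abs{(b(x)-b(y))K(x,y)}\lesssim\Norm{b}{\dot{C}^{0,\alpha}}\abs{x-y}^{\alpha-d}$, which is locally integrable, so $[b,T]f(x)=\int(b(x)-b(y))K(x,y)f(y)\ud y$ for a.e.\ $x$ and $[b,T]$ is dominated pointwise by $\Norm{b}{\dot{C}^{0,\alpha}}$ times the Riesz potential of order $\alpha$; the bound then follows from the Hardy--Littlewood--Sobolev inequality because $\alpha/d=1/p-1/q$. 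When $q<p$, one uses $[b,T]=[b-c,T]$ for a constant $c$, H\"older's inequality with $1/q=1/s+1/p$, and the $L^p$- and $L^q$-boundedness of $T$ to get $\Norm{[b,T]f}{L^q}\lesssim\Norm{b-c}{L^s}\Norm{f}{L^p}$, and takes the infimum over $c$. The diagonal bound $q=p$ is the Coifman--Rochberg--Weiss estimate, obtainable by the Cauchy-integral (conjugation by $e^{zb}$) trick or by decomposing $[b,T]$ into dyadic paraproducts with symbol $b$, which are $L^p$-bounded precisely when $b\in\BMO$.

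The heart of the theorem is the lower bounds, which I would derive from a single approximate weak factorization. Fix a cube $Q$. Non-degeneracy of $K$ gives, for any prescribed (large) separation ratio $R$, a cube $Q'$ with $\ell(Q')\sim\ell(Q)\sim R^{-1}\dist(Q,Q')$ such that $\abs{K(x,y)}\gtrsim\ell(Q)^{-d}$ for all $x\in Q,\ y\in Q'$, and such that $K(x,\cdot)$ varies over $Q'$ by at most $\omega(c)\ell(Q)^{-d}$ with $c=\ell(Q')/\dist(Q,Q')$ small; in particular $\int_{Q'}K(x,y)\ud y$ has magnitude $\gtrsim 1$ and a fixed sign structure. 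Now test the bilinear form $(f,g)\mapsto\pair{[b,T]f}{g}=\iint(b(x)-b(y))K(x,y)f(y)g(x)\ud x\ud y$ at $f=1_{Q'}$ and $g=1_Q\sigma$, where $\sigma$ is the \emph{unimodular} function of $x$ chosen so that $\sigma(x)\,(b(x)-\ave{b}_{Q'})\,\int_{Q'}K(x,y)\ud y=\babs{b(x)-\ave{b}_{Q'}}\cdot\babs{\int_{Q'}K(x,y)\ud y}$. Freezing $b(x)-\ave{b}_{Q'}$ in the inner integral produces a main term $\gtrsim\int_Q\abs{b-\ave{b}_{Q'}}$, while the difference (an error for which $\int_{Q'}(\ave{b}_{Q'}-b(y))\ud y=0$) is estimated, after subtracting $K(x,c_{Q'})$ for the center $c_{Q'}$ of $Q'$ and invoking the smoothness estimate, by $\omega(c)$ times a comparable fractional oscillation of $b$. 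Choosing a unimodular $\sigma$ rather than the sign of a real function is exactly the device that keeps this valid for complex-valued $b$ and bypasses the median method. Since $\babs{\pair{[b,T]1_{Q'}}{g}}\le\Norm{[b,T]}{L^p\to L^q}\abs{Q'}^{1/p}\abs{Q}^{1/q'}$ and $\abs{Q'}\sim\abs{Q}$, and since $1/p+1/q'-1=1/p-1/q$, dividing by $\abs{Q}$ yields the local estimate
\begin{align*}
	\fint_Q\abs{b-\ave{b}_{Q'}}\lesssim\Norm{[b,T]}{L^p\to L^q}\,\ell(Q)^{d(1/p-1/q)}+\omega(c)\cdot(\text{fractional oscillation of }b\text{ on a cube}\sim Q),
\end{align*}
and replacing $\ave{b}_{Q'}$ by $\ave{b}_Q$ costs only a factor $2$.

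It remains to upgrade this to the stated norms. If $q\ge p$ then $d(1/p-1/q)=\alpha\ge 0$, so after dividing by $\ell(Q)^{\alpha}$ one takes the supremum over all cubes $Q$; since the error involves only cubes comparable to $Q$, choosing $R$ large makes $\omega(c)<\tfrac12$ and the $\BMO^\alpha$-norm (equivalent, for $\alpha>0$, to $\Norm{b}{\dot{C}^{0,\alpha}}$, and for $\alpha=0$ just $\BMO$) is absorbed into the left-hand side, after the customary precaution of first running the argument with a truncated norm. If $q<p$ the target $\dot L^s$-norm is global and a direct summation of the $L^1$-estimates fails for scaling reasons, so one instead fixes a constant $c$ nearly realizing $\inf_{c}\Norm{b-c}{L^{s}}$ on a large cube, forms an essentially disjoint family of congruent cubes (partitioned into a bounded number of colors so that the partner cubes $Q'$ are disjoint), and tests $[b,T]$ against a single superposition $f=\sum_j a_j 1_{Q'_j}$ with the scalars $a_j$ dual to the local oscillations; an $\ell^{s}$--$\ell^{s'}$ duality then reconstructs $\Norm{b-c}{L^s}$ up to $\Norm{[b,T]}{L^p\to L^q}$ and an absorbable error, and one lets the large cube exhaust $\R^d$. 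I expect this last step --- the $q<p$ summation, with its $\ell^s$-duality and the uniform bookkeeping of errors over the disjoint family --- to be the main obstacle, whereas the $q\ge p$ absorption and all three upper bounds are comparatively soft.
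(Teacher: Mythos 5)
The paper states Theorem~\ref{thm:theory1} as background and does not prove it; it is quoted from \cite{CRW}, \cite{Janson1978} and \cite{HyLpLq}. The only related machinery the paper itself contains is Proposition~\ref{prop:osc1}, the approximate weak factorization lemma imported from \cite{HyLpLq}, and that is exactly what your lower-bound sketch re-derives: the unimodular phase $\sigma$ chosen pointwise so that $\sigma(x)\,(b(x)-\ave{b}_{Q'})\int_{Q'}K(x,y)\ud y$ is nonnegative, the mean-zero error controlled by the kernel smoothness, and absorption by pushing the separation ratio $R$ large. Your upper-bound outline (H\"older for $q<p$, Hardy--Littlewood--Sobolev domination by $I_\alpha$ for $q>p$, Coifman--Rochberg--Weiss for $q=p$) is the standard one. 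So the proposal follows the same route as the cited sources and is correct in outline. Three points worth tightening. First, the $q>p$ upper bound uses the pointwise identity $[b,T]f(x)=\int(b(x)-b(y))K(x,y)f(y)\ud y$ also for $x\in\supp f$; the off-support representation gives it only off $\supp f$, and extending it (by truncation and dominated convergence, using that the singularity becomes integrable when $b\in\dot C^{0,\alpha}$) is a real, if short, step. Second, the non-degeneracy in Definition~\ref{defn:nondeg} reads ``for all $y$ there exists $x$,'' so the natural construction of the partner cube yields kernel lower bounds on $\wt{Q}\times Q$, whereas your testing configuration $f=1_{Q'}$, $g=\sigma 1_Q$ requires them on $Q\times Q'$; reconciling this orientation (by relabeling, by using both regularity hypotheses, or by passing to the adjoint) is precisely why Proposition~\ref{prop:osc1} carries a two-term right-hand side, and you should not gloss over it. Third, as you flag yourself, the $q<p$ global $\dot L^s$ lower bound is where the substance lies: your disjoint-family, coloring, $\ell^s$--$\ell^{s'}$ duality and exhaustion scheme is the correct blueprint, but the uniform error bookkeeping across the family and the passage to $\R^d$ are genuinely delicate and are only carried out fully in \cite{HyLpLq}.
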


In addition to boundedness, we also study the compactness of commutators. 
\begin{defn} A linear operator $T:X\to Y$ between two Banach spaces is said to be compact, provided that for each bounded set $A\subset X,$ the image $TA\subset Y$ is relatively compact, i.e. the closure $\overline{TA}$ is compact in $Y$. We denote by $\calK(X,Y)$ the collection of all compact operators from $X$ to $Y.$
\end{defn}
Recall that all compact operators are bounded.
 Already Uchiyama \cite{Uch1978} in 1978 provided, for a wide class of singular integrals, a characterization of compactness of their commutators in terms of the symbol belonging to the space $\VMO\subset\BMO$ of vanishing mean oscillation (see Definition \ref{defn:VMOw} and set $\alpha = 0, \nu = 1$).
% where $\VMO\subset \BMO$ is the space of vanishing mean oscillation consisting of functions $b$ satisfying
% \begin{align}\label{defn:VMO}
% 	\lim_{r\to 0}\sup_{\ell(Q)=r}\calO(b;Q),\quad 	\lim_{r\to \infty}\sup_{\ell(Q)=r}\calO(b;Q),\quad 
% 	\sup_{Q\in\calQ}\lim_{\abs{x}\to \infty}\calO(b;Q+x).
% \end{align}
 Without providing the whole background, a special case of Uchiyama's result is the following
 \begin{thm}[\cite{Uch1978}]\label{thm:uchi} Let $b\in L^1_{\loc}$ and $p\in(1,\infty).$ Then, for each $j=1,\dots,d,$ there holds that
 	\begin{align}\label{eq:compact:uch}
 	[b,\calR_j]\in \calK(L^p(\R^d), L^p(\R^d))\quad \mbox{iff} \quad b\in \VMO(\R^d).
 \end{align}
 \end{thm}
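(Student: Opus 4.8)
I would prove the equivalence in Theorem~\ref{thm:uchi} by treating the two implications separately; only the necessity involves anything subtle.

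\emph{Sufficiency} ($b\in\VMO\Rightarrow[b,\calR_j]\in\calK$). The plan is to use that $\VMO(\R^d)$ coincides with the closure of $C_c^\infty(\R^d)$ in the $\BMO$ norm. By the Coifman--Rochberg--Weiss upper bound in \eqref{eq:commutatorbmo}, the map $b\mapsto[b,\calR_j]$ is bounded from $\BMO$ into $\mathcal{L}(L^p,L^p)$, so if $b_n\to b$ in $\BMO$ with $b_n\in C_c^\infty$, then $[b_n,\calR_j]\to[b,\calR_j]$ in operator norm; since the compact operators form a norm-closed subspace, it suffices to treat $b\in C_c^\infty$. For such $b$ the commutator is the integral operator with kernel $L(x,y)=(b(x)-b(y))K(x,y)$, and the Lipschitz bound $\abs{b(x)-b(y)}\lesssim\abs{x-y}$ combined with \eqref{kernel:size} yields $\abs{L(x,y)}\lesssim\abs{x-y}^{1-d}$, i.e.\ only a weak, locally integrable singularity, while the compact support of $b$ forces $L(x,y)$ to decay at infinity. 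I would then verify the Fr\'echet--Kolmogorov criterion for the image of the unit ball of $L^p$ --- uniform boundedness (from \eqref{eq:commutatorbmo}), uniform smallness of the tails over $\R^d\setminus B(0,R)$, and uniform $L^p$-equicontinuity under translations --- or, equivalently, approximate $[b,\calR_j]$ in operator norm by integral operators with bounded, compactly supported kernels (truncating $L$ near the diagonal and at infinity and smoothing), these being compact on $L^p$.

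\emph{Necessity} ($[b,\calR_j]\in\calK\Rightarrow b\in\VMO$). Compactness implies boundedness, so Theorem~\ref{thm:theory1} already gives $b\in\BMO$; it remains to exclude the three ways a $\BMO$ function can fail to lie in $\VMO$, i.e.\ that
\[
	\lim_{a\to0}\sup_{\ell(Q)\leq a}\calO(b;Q)=\lim_{a\to\infty}\sup_{\ell(Q)\geq a}\calO(b;Q)=\lim_{a\to\infty}\sup_{Q\cap B(0,a)=\emptyset}\calO(b;Q)=0
\]
fails. If it does, there is a sequence of cubes $Q_n$ with $\calO(b;Q_n)\gtrsim1$ whose side-lengths shrink to $0$, or grow to $\infty$, or whose centres escape to infinity. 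Exactly as in the lower-bound part of Theorem~\ref{thm:theory1}, non-degeneracy of the Riesz kernel furnishes for each $n$ a twin cube $Q_n'$ with $\ell(Q_n')\sim\ell(Q_n)$ and $\dist(Q_n,Q_n')\sim\ell(Q_n)$ on which the kernel has definite size, and the approximate weak factorization argument then produces normalized $f_n\in L^p$ supported in $Q_n'$ and uniformly bounded $g_n\in L^{p'}$ supported in $Q_n$ with
\[
	\abs{\pair{[b,\calR_j]f_n}{g_n}}\gtrsim\calO(b;Q_n)\gtrsim1,
\]
the error terms being controlled by $\Norm{b}{\BMO}$ and the Dini regularity of $K$. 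In all three regimes $f_n\rightharpoonup0$ weakly in $L^p$: when $\abs{Q_n'}\to0$ this follows from absolute continuity of the integral, and when $\abs{Q_n'}\to\infty$ or the cubes escape to infinity it follows since $\abs{Q_n'}^{-1/p}1_{Q_n'}\to0$ pointwise while $\Norm{f_n}{L^p}=1$. Because a compact operator carries weakly null sequences to norm null sequences, $[b,\calR_j]\in\calK$ would force $\Norm{[b,\calR_j]f_n}{L^p}\to0$, hence $\pair{[b,\calR_j]f_n}{g_n}\to0$, contradicting the displayed lower bound.

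The main obstacle is the necessity direction: the approximate weak factorization must be carried out with enough care that the lower bound $\abs{\pair{[b,\calR_j]f_n}{g_n}}\gtrsim\calO(b;Q_n)$ holds with all error terms absorbed \emph{uniformly} across the three degeneration regimes, and one must check that each family $f_n$ is genuinely weakly null in the corresponding regime. The sufficiency direction is comparatively routine once the density of $C_c^\infty$ in $\VMO$ and the Fr\'echet--Kolmogorov (or kernel-approximation) argument are set up; its only delicate point is the uniform equicontinuity, which rests on combining the improved size bound $\abs{x-y}^{1-d}$ with a H\"ormander-type estimate for the smooth part of $L$.
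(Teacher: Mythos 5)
The paper does not prove Theorem~\ref{thm:uchi} itself; it cites it from Uchiyama \cite{Uch1978} as background, and then derives it later as the special case $\mu=\lambda=1$, $p=q$, $\alpha=0$ of the new Theorem~\ref{thm:main1}. Your argument is essentially sound, but it follows a different route from the one the paper uses to establish the generalization.

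For sufficiency, you invoke the density of $C_c^\infty$ in $\VMO$ under the $\BMO$ norm, linearity and $\BMO$-continuity of $b\mapsto[b,\calR_j]$, and then compactness of $[b,\calR_j]$ for smooth compactly supported $b$ via Fr\'echet--Kolmogorov or kernel truncation. This is the classical Uchiyama-style argument and it works cleanly in the unweighted, $p=q$ setting. The paper instead decomposes $T=T_c+T_\varepsilon$, proving $[b,T_c]$ compact through Arzel\`a--Ascoli applied to the truncated kernel and showing $\Norm{[b,T_\varepsilon]}{L^p_\mu\to L^q_\lambda}\to 0$ by sparse domination (Proposition~\ref{prop:err:sprs}). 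The sparse-domination route is heavier machinery here, but it is precisely what survives the passage to Bloom weights and the off-diagonal $p<q$ range, where the $C_c^\infty$-density argument is not available in the same form. So your approach buys simplicity for the stated unweighted theorem, while the paper's approach buys generality.

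For necessity, both you and the paper use the approximate weak factorization of Proposition~\ref{prop:osc1} to produce, on a degenerating family of cubes, normalized test functions for which the bilinear form $\pair{[b,\calR_j]f_n}{g_n}$ stays bounded below. You then conclude via ``compact operators map weakly null sequences to norm null sequences,'' arguing that $f_n\rightharpoonup 0$ in each of the three degeneration regimes. The paper instead uses Lemma~\ref{lem:seq} (Uchiyama's lemma about \emph{disjointly} supported $u_i$ with $Uu_i\to\Phi\neq 0$), which is why Lemma~\ref{lem:comp:nec1} goes to the trouble of extracting a \emph{sparse} family $\{Q_j,E_j\}$ so that the test functions have genuinely disjoint supports. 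The two criteria are equivalent in effect; the paper's version sidesteps any appeal to pointwise-a.e.\ plus boundedness implying weak nullity in $L^p$, a classical but not entirely automatic fact that you should state more carefully (in particular, ``$\to 0$ pointwise while $\Norm{f_n}{L^p}=1$'' is not by itself a proof of weak nullity; one needs the standard dense-testing/Egorov argument, or equivalently the disjoint-support structure the paper builds in).

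Two further small points. First, the third $\VMO$ condition as stated in the paper is the pointwise limit~\eqref{eq:VMOv3}; the uniform version~\eqref{eq:VMOv33} you quote is equivalent under doubling of $\nu$ by Remark~\ref{rem:vmo3eq}, but it is worth noting that equivalence if you start from the paper's definition. Second, in the kernel-truncation variant of your sufficiency proof, the decay of $L(x,y)=(b(x)-b(y))K(x,y)$ at infinity is only through the $|x-y|^{-d}$ tail of $K$ (with one of $x,y$ forced into $\supp b$), not a genuine compact support of $L$; the tail estimate in $L^p$ does go through for $p>1$, but it is the Fr\'echet--Kolmogorov path that is the cleaner of the two options you list.
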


The main purpose of this article is to provide two-weight extensions of Theorems \ref{thm:theory1} and \ref{thm:uchi} that fully cover the case $1<p\leq q<\infty.$ 

%for commutators of SIOs of both variable and rough kernels. 
%We will now move to review the current weighted theory.

\subsection{Setup for two-weight bounds}
For integrability parameters $1<p,q<\infty$ and weights $\mu,\lambda:\R^d\to\R_+$ (positive locally integrable functions) our goal is to obtain a characterization of both the boundedness and the compactness of the commutator as a mapping
\begin{align*}
	[b,T]:L^p_{\mu}\to L^q_{\lambda},\qquad \Norm{f}{L^s_{\sigma}} = \Big(\int\abs{f\sigma}^s\Big)^{\frac{1}{s}}.
\end{align*}
This convention of treating the weight as a multiplier goes hand in hand with using the following rescaled weight characteristics
	\begin{align}\label{c:interpret2}
		[\sigma,\omega]_{A_{p,q}} = \sup_{Q\in\calQ}\bave{\sigma^{q}}_Q^{\frac{1}{q}}\bave{\omega^{-p'}}_Q^{\frac{1}{p'}},\qquad [\mu]_{A_{p,q}} = [\mu,\mu]_{A_{p,q}},\qquad [\mu^p]_{A_p} = [\mu]_{A_{p,p}}^p.
	\end{align}
 We say that $\mu\in A_{p,p},$ provided that $[\mu]_{A_{p,p}}<\infty,$ and $\mu\in A_{p},$ provided that $[\mu]_{A_{p}}<\infty.$
%Historically weighted estimates have been mainly written with the scaling $[\mu]_{A_p} = [\mu^{1/p}]_{A_{p,p}}^p.$

%\subsection{Two-weight commutator bounds}
 In 1985, under the assumption that $\mu,\lambda\in A_{p,p},$ Bloom \cite{Bloom1985} characterized the two-weight boundedness of the Hilbert commutator on the line by providing the two-sided estimate
\begin{align}\label{eq:Bloom}
 \Norm{b}{\BMO_{\nu}}\lesssim \Norm{[b,H]}{L^p_{\mu}\to L^p_{\lambda}}\lesssim \Norm{b}{\BMO_{\nu}},
% _{[\mu]_{A_{p,p}},[\lambda]_{A_{p,p}}}
\end{align}
where we refer to the left and right estimates as the Bloom lower and upper bounds, respectively, and where \begin{align}\label{eq:weight:bloom}
	\Norm{b}{\BMO_{\nu}}= \sup_{Q\in\calQ}\frac{1}{\nu(Q)}\int_{Q}\abs{b-\ave{b}_Q},\qquad \nu = \mu/\lambda.
\end{align}
Recall that both estimates on the line \eqref{eq:Bloom} depend on the weight characteristics $[\mu]_{A_{p,p}},$ $[\lambda]_{A_{p,p}},$ a fact we will surpress in our notation.
This seminal work of Bloom on the Hilbert transform was extended to general Calder\'on-Zygmund operators by Segovia and Torrea \cite{ST93}.

A renewed interest into such estimates, now commonly called ``of Bloom type'', was sparked by the works of Holmes, Lacey and Wick \cite{Holmes2016,HLW2017} who revisited both results with the recent technology of dyadic representation theorems, due to Petermichl \cite{Petermichl2000} for the Hilbert transform, and due to one of us \cite{Hy1} for general Calder\'on-Zygmund operators.
%After the seminal work of Bloom, it took  over thirty years\footnote{Someone mentioned in El Escorial that there is some relevant intermediate work that should be mentioned here.} until Holmes, Lacey and Wick \cite{HLW2017} in 2017 extended the Bloom upper bound of \eqref{eq:Bloom} to cover all Calder\'on-Zygmund operators with variable kernel by proving the estimate for dyadic shifts from Hyt\"{o}nen's \cite{Hy1} dyadic representation theorem.
Only shortly after, Lerner, Ombrosi and Rivera-Ríos \cite{LOR1} obtained a sparse domination of commutators that yielded a simpler proof and also the sharp upper bound, in the sense that the sharp one-weight estimate was reproduced.
%\[
%\Norm{[b,T]}{L^p_{\mu}\to L^p_{\lambda}}\lesssim \big([\mu]_{A_{p,p}}[\lambda]_{A_{p,p}}\big)^{ \min \{p,p'\}}\Norm{b}{\BMO_{\nu}}.
%\]
At this point, the Bloom lower bound was checked for the vector of the Riesz transform in \cite{HLW2017} (involving all the $d$ Riesz transforms, similarly as above on the line \eqref{eq:commutatorbmo}).
The extension to real-valued homogeneous symbols that do not change sign on some open subset of $\mathbb{S}^{d-1}$ was obtained by Lerner et al. \cite{LOR2}, and the Bloom lower bound for general SIOs of either variable or rough homogeneous kernel, and for complex symbols, was subsequently proved by one of us \cite{HyLpLq}. To complete the picture, recently K.~Li \cite{Li2020multicom} sketched the proof of the Bloom upper bound for rough homogeneous CZOs. All in all, the state-of-the-art qualitative two-weight Bloom boundedness of commutators is recorded as the following

\begin{thm}[\cites{HLW2017, LOR1, HyLpLq, Li2020multicom}]\label{thm:theory2} Let $1<p<\infty$, let $\mu,\lambda\in A_{p,p},$ let $T$ be a non-degenerate Calderón-Zygmund operator and $b\in L^1_{\loc}(\R^d;\C).$ Then, there holds that 
	\begin{align}\label{eq:BloomT}
	 \Norm{[b,T]}{L^p_{\mu}\to L^p_{\lambda}}\sim \Norm{b}{\BMO_{\nu}}.
	\end{align}
\end{thm}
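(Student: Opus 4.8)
We split \eqref{eq:BloomT} into its two one-sided estimates; the upper bound goes back to \cite{HLW2017} and, with the sharp dependence on the weight characteristics, to \cite{LOR1}, while the lower bound is due to \cite{HyLpLq}. One observation is used throughout: since $\mu,\lambda\in A_{p,p}$, i.e.\ $\mu^p,\lambda^p\in A_p$, the Bloom weight $\nu=\mu/\lambda=\big(\mu^p/\lambda^p\big)^{1/p}$ lies in $A_2$ (Holmes--Lacey--Wick), so $\nu$, $\mu^p$ and $\lambda^{-p'}$ are doubling and $\BMO_\nu$ admits a John--Nirenberg self-improvement.

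\emph{Upper bound.} The plan is to invoke the pointwise sparse domination of \cite{LOR1}: for bounded, compactly supported $f$ there is a sparse family $\calS$ with
\begin{equation*}
\abs{[b,T]f}\lesssim\sum_{Q\in\calS}\Big(\abs{b-\ave{b}_Q}\,\ave{\abs f}_Q+\ave{\abs{b-\ave{b}_Q}\abs f}_Q\Big)\mathbf 1_Q .
\end{equation*}
For the second summand, H\"older with an exponent slightly above $1$ and the weighted John--Nirenberg inequality replace $\ave{\abs{b-\ave{b}_Q}\abs f}_Q$ by $\lesssim\Norm{b}{\BMO_\nu}\ave\nu_Q\ave{\abs f^s}_Q^{1/s}$ for some $s>1$; the first summand is handled by a further stopping-time decomposition. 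One is then reduced to estimating weighted bilinear sparse forms against $L^p_\mu\times L^{p'}_{\lambda^{-1}}$ (recall that, under the multiplier convention, $(L^p_\lambda)^{*}=L^{p'}_{\lambda^{-1}}$), which follows from weighted Carleson embedding and the $A_{p,p}$ hypotheses. This gives $\Norm{[b,T]}{L^p_\mu\to L^p_\lambda}\lesssim\Norm{b}{\BMO_\nu}$.

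\emph{Lower bound.} Fix a cube $Q$. Applying the non-degeneracy of $K$ with the separation parameter taken to be a suitable fixed multiple of $\ell(Q)$, and then using the Dini continuity of $K$ off the diagonal, one produces a partner cube $\widehat Q$ with $\ell(\widehat Q)\sim\ell(Q)$ and $\dist(Q,\widehat Q)\sim\ell(Q)$ on which $\abs K\gtrsim\ell(Q)^{-d}$ and $\arg K$ is nearly constant. One tests the bilinear form of $[b,T]$ on $f=\mathbf 1_{\widehat Q}e$, with $e$ a unimodular constant chosen so that $\Re\int_{\widehat Q}K(x,y)e\ud y\gtrsim1$ for $x\in Q$, and on $g=\mathbf 1_Q\,\overline{\sign(b-\ave{b}_Q)}$. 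Writing $b(x)-b(y)=(b(x)-\ave{b}_Q)-(b(y)-\ave{b}_{\widehat Q})-(\ave{b}_Q-\ave{b}_{\widehat Q})$, the first piece yields a term $\gtrsim\int_Q\abs{b-\ave{b}_Q}$, while the other two are oscillations of $b$ over $\widehat Q$ and over a common dilate of $Q$ and $\widehat Q$. Since $\Norm{f}{L^p_\mu}=\mu^p(\widehat Q)^{1/p}$, $\Norm{g}{L^{p'}_{\lambda^{-1}}}=\lambda^{-p'}(Q)^{1/p'}$, and --- by two applications of the $A_{p,p}$ conditions and the trivial bound $\ave{\nu^{-1}}_Q^{-1}\le\ave\nu_Q$ --- one has $\ave{\mu^p}_Q^{1/p}\ave{\lambda^{-p'}}_Q^{1/p'}\lesssim\ave\nu_Q$, the test-function normalization (using doubling of $\mu^p$ and $\lambda^{-p'}$ to pass to the common dilate) is $\lesssim\nu(Q)$, which is the correct normalization for $\BMO_\nu$.

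\emph{The main obstacle.} Dividing by $\nu(Q)$ and taking a supremum over $Q$ does \emph{not} close the argument on its own: the two oscillation remainders above are controlled by $\Norm{b}{\BMO_\nu}$ only with a constant that cannot be made smaller than $1$ by the preceding choices. This is precisely what the approximate weak factorization of \cite{HyLpLq} resolves --- it realizes $\int_Q\abs{b-\ave{b}_Q}$ not through a single partner cube but as a geometrically convergent telescoping sum of commutator pairings across scales, in which the remainder at each stage is absorbed into the next, so that the final estimate charges only a fixed multiple of $\Norm{[b,T]}{L^p_\mu\to L^p_\lambda}$. A related point, also dealt with there, is the upgrade of the pointwise non-degeneracy of $K$ to a whole cube with control of $\arg K$: this renders the method insensitive to the sign of the kernel, lets it reproduce $\int_Q\abs{b-\ave{b}_Q}$ rather than only $\babs{\int_Q(b-\ave{b}_Q)}$, and in particular handles complex-valued $b$ without recourse to the real-valued median. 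For all details we refer to \cite{HyLpLq}.
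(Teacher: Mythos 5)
Theorem \ref{thm:theory2} is stated in the paper as a survey result and is also reproved as the $p=q$, $\alpha=0$ case of Theorem \ref{thm:main1}; your sketch is essentially correct and cites the right sources, but the route differs from the paper's own in one place. For the lower bound you and the paper both invoke the approximate weak factorization of \cite{HyLpLq} (Proposition \ref{prop:osc1}), and your normalization bookkeeping $\mu^p(Q)^{1/p}\lambda^{-p'}(Q)^{1/p'}\sim\nu(Q)$ matches Proposition \ref{prop:bmoeq}; one small inaccuracy is the claim that the awf replaces the single partner cube by ``a telescoping sum of commutator pairings across scales'' --- in fact Proposition \ref{prop:osc1} still produces a \emph{single} partner cube $\widetilde Q$, and the geometric iteration lives inside the construction of the auxiliary function $h$, not in the pairing itself. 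For the upper bound you follow the classical \cite{LOR1} route: after sparse domination, apply H\"older with a small exponent $s>1$ and the weighted John--Nirenberg inequality to peel off $\Norm{b}{\BMO_\nu}\ave{\nu}_Q$, then estimate the resulting $\nu$-weighted bilinear sparse form. The paper's Theorem \ref{thm:ub} instead uses the augmentation Lemma \ref{lem:aug} to replace $\abs{b-\ave{b}_Q}\mathbf 1_Q$ by a sum of first-order oscillations over a refined sparse family, and then controls the fractional sparse operator $\mathcal A^{p,q}_{\mu,\lambda}$ by a direct $\ell^p$-duality argument (Lemma \ref{lem:prePyth}) --- this avoids any use of John--Nirenberg in the boundedness part, reserving that machinery for the appendix. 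Both routes are valid; the paper's has the advantage of transferring verbatim to the off-diagonal case $p<q$ and to the $\VMO$ argument (Proposition \ref{prop:err:sprs}), which is why it is organized that way.
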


The corresponding characterization of compactness for CZOs of variable kernel was recently obtained by Lacey and J.~Li \cite{LacLi2021}, where they proved the following

\begin{thm}[\cite{LacLi2021}]\label{thm:LacLi2021} Let $1<p<\infty$ and $\mu,\lambda\in A_{p,p},$ let $T$ be a non-degenerate Calderón-Zygmund operator with variable kernel, as in Definition \ref{defn:SIO:var}, and let $b\in L^1_{\loc}(\R^d;\R).$ Then, there holds that 
\begin{equation}\label{eq:LacLi}
	[b,T]\in\calK(L^p_{\mu}, L^p_{\lambda}) \quad \mbox{iff} \quad b\in \VMO_{\nu}.
\end{equation}
\end{thm}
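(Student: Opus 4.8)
The plan is to prove the two implications separately, using throughout the two-weight \emph{boundedness} statement of Theorem~\ref{thm:theory2}. \textbf{Sufficiency.} Reading Definition~\ref{defn:VMOw} at $\alpha=0$, the space $\VMO_\nu$ coincides with the closure of $C_c^\infty(\R^d)$ in the $\BMO_\nu$ norm. Thus, given $b\in\VMO_\nu$, I would choose $b_k\in C_c^\infty$ with $\Norm{b-b_k}{\BMO_\nu}\to0$; the Bloom upper bound of Theorem~\ref{thm:theory2} then gives $\Norm{[b-b_k,T]}{L^p_\mu\to L^p_\lambda}\lesssim\Norm{b-b_k}{\BMO_\nu}\to0$, and since $\calK(L^p_\mu,L^p_\lambda)$ is closed in operator norm it remains to prove $[b,T]\in\calK(L^p_\mu,L^p_\lambda)$ for a single $b\in C_c^\infty$. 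For this I would use a two-weight Fr\'echet--Kolmogorov criterion --- uniform boundedness, uniform decay outside large balls, and uniform $L^p_\lambda$-equicontinuity under translations --- or, equivalently, note that for Lipschitz compactly supported $b$ the commutator is the integral operator with kernel $(b(x)-b(y))K(x,y)$, approximate this kernel by a mollified truncation to $\delta<|x-y|<\delta^{-1}$ (which has a continuous, compactly supported kernel and is hence compact between the weighted spaces by a Hille--Tamarkin argument), and estimate the error by \eqref{kernel:size} and the Dini smoothness of $K$ near the diagonal, and by \eqref{kernel:size} together with the compact support of $b$ far from it, against both Muckenhoupt weights $\mu^p,\lambda^p\in A_p$; this shows the truncated operators converge to $[b,T]$ in $L^p_\mu\to L^p_\lambda$ operator norm, so $[b,T]\in\calK$.

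\textbf{Necessity.} If $[b,T]\in\calK(L^p_\mu,L^p_\lambda)$ then it is bounded, so the Bloom lower bound of Theorem~\ref{thm:theory2} already gives $b\in\BMO_\nu$. To upgrade this to $b\in\VMO_\nu$ I would argue by contradiction: if $b\notin\VMO_\nu$, one of the three defining vanishing limits fails, producing $\delta>0$ and cubes $Q_n$ with $\calO_\nu(b;Q_n)\ge\delta$ that degenerate in one of the ways $\ell(Q_n)\to0$, $\ell(Q_n)\to\infty$, or $\dist(0,Q_n)\to\infty$. I would then invoke the \emph{localized} form of the Bloom lower bound --- either the median method of \cite{LacLi2021} (precisely where real-valuedness of $b$ is used) or the approximate weak factorization of \cite{HyLpLq}, both resting on the non-degeneracy of $K$ (Definition~\ref{defn:nondeg}) --- to obtain, for each cube $Q$, functions $f_Q$ and $g_Q$ supported in a fixed dilate of $Q$ with $\Norm{f_Q}{L^p_\mu}=\Norm{g_Q}{L^{p'}_{\lambda^{-1}}}=1$ and $\babs{\pair{g_Q}{[b,T]f_Q}}\gtrsim\calO_\nu(b;Q)$. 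Applying this with $Q=Q_n$ gives $\babs{\pair{g_{Q_n}}{[b,T]f_{Q_n}}}\gtrsim\delta$. On the other hand $f_{Q_n}\to0$ weakly in $L^p_\mu$ and $g_{Q_n}\to0$ weakly in $L^{p'}_{\lambda^{-1}}$: when $\dist(0,Q_n)\to\infty$ or $\ell(Q_n)\to0$ this follows by testing against compactly supported functions and a routine absolute-continuity argument, while for $\ell(Q_n)\to\infty$ it follows because the normalizing constants tend to $0$, since $\mu^p,\lambda^p\in A_p$ force the corresponding dual weights to have infinite mass on $\R^d$. Because a compact operator sends weakly null sequences to norm null ones, $\Norm{[b,T]f_{Q_n}}{L^p_\lambda}\to0$, hence $\babs{\pair{g_{Q_n}}{[b,T]f_{Q_n}}}\le\Norm{g_{Q_n}}{(L^p_\lambda)^{*}}\Norm{[b,T]f_{Q_n}}{L^p_\lambda}\to0$, contradicting $\gtrsim\delta$. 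Therefore $b\in\VMO_\nu$.

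\textbf{Main obstacle.} The two genuinely technical points are: on the sufficiency side, the two-weight near-diagonal/far-diagonal estimates needed to run the Fr\'echet--Kolmogorov verification (or bound the kernel-truncation error), where every error term must be dominated --- uniformly --- by fixed operators bounded from $L^p(\mu^p)$ to $L^p(\lambda^p)$; and on the necessity side, extracting the localized lower bound $\babs{\pair{g_Q}{[b,T]f_Q}}\gtrsim\calO_\nu(b;Q)$ with test functions that are simultaneously normalized in the two weighted spaces, paired with $[b,T]$ from below by the full Bloom oscillation, and weakly null in all three degeneration regimes --- the regime $\ell(Q_n)\to\infty$ being the most delicate, since there weak nullity must be read off the weight characteristics rather than from shrinking supports. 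Carrying out the localized lower bound via the median of $b$ over $Q$ is exactly what confines Theorem~\ref{thm:LacLi2021} to real-valued $b$, and removing this restriction (together with reaching the off-diagonal range $p<q$) is part of what the present paper does.
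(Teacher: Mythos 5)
Your plan diverges from the paper's (and from Lacey--Li's) in both directions, and the sufficiency half contains a genuine gap. You open by asserting that ``reading Definition~\ref{defn:VMOw} at $\alpha=0$, the space $\VMO_\nu$ coincides with the closure of $C_c^\infty(\R^d)$ in the $\BMO_\nu$ norm.'' That is \emph{not} something one can read off the definition: even in the unweighted case $\nu\equiv 1$ the identity $\VMO=\CMO:=\overline{C_c^\infty}^{\BMO}$ is a theorem (Uchiyama, Neri), and the weighted Bloom analogue for a general $\nu\in A_\infty$ is a nontrivial claim that the paper never proves and never uses. Everything downstream of it --- the reduction to Lipschitz compactly supported $b$, the Hille--Tamarkin/Fr\'echet--Kolmogorov argument --- is conditional on this unsupported density. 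The paper instead decomposes the \emph{operator} $T=T_c+T_\varepsilon$ (no approximation of $b$ at all): $[b,T_c]$ is handled by an Arzel\`a--Ascoli factorization through $C(Q;\|\cdot\|_\infty)$, while $\|[b,T_\varepsilon]\|_{L^p_\mu\to L^q_\lambda}\to 0$ is obtained from the sparse domination of Lemma~\ref{lem:sdom} combined with Proposition~\ref{prop:err:sprs}, which exploits the three $\VMO_\nu^\alpha$ limits \eqref{eq:VMOv1}--\eqref{eq:VMOv3} directly on the level of the sparse form. This avoids both the unproven density and the two-weight Fr\'echet--Kolmogorov subtleties you flag as the ``main obstacle.''

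For necessity, your ``compact sends weakly null to norm null'' argument is a valid alternative, but it forces you into a three-way case analysis (small cubes, large cubes, distant cubes) to prove $f_{Q_n}\rightharpoonup 0$ in $L^p_\mu$, and you yourself note that the $\ell(Q_n)\to\infty$ regime is delicate and only sketch it. The paper takes a different route that avoids weak convergence entirely: Lemma~\ref{lem:comp:nec1} upgrades the failing oscillation to a \emph{sparse}, hence disjointly supported, family $\{E_j\}$ with $\calO_\nu^\alpha(b;E_j)\gtrsim 1$ (the extra work is precisely in arranging disjointness, via the stability estimate \eqref{eq:calOEhalf} and a telescoping argument), and Lemma~\ref{lem:seq} then shows that no \emph{bounded} operator can send a disjointly supported, uniformly normalized sequence to a sequence converging in norm to a nonzero limit --- a statement that uses only $\ell^p$ versus $\ell^1$ summability of scalar coefficients, not weak compactness. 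You should either fill in the weak-nullity claims in each regime (including arranging that the cubes do not stagnate), or replace that portion with the Lemma~\ref{lem:seq} mechanism. Finally, your remark that the median method confines the theorem to real-valued $b$ while the awf argument of Proposition~\ref{prop:osc1} lifts this restriction is accurate and is indeed the paper's point.
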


Their proof of the ``if'' part refers to ``the proof of the upper bound for the commutators in \cite{HLW2017}'', which in turn is based on the dyadic representation theorem \cite{Hy1}. In another paper of the same authors with Chen and Vempati \cite{CLLV}, they reprove this result via sparse domination and extend it to spaces of homogeneous type in place of $\R^d$. The ``if'' part is also partially recovered via an abstract method of extrapolation of compactness by Liu, Wu and Yang \cite{LWY}; however, this method leads to assumptions that appear to be stronger than $b\in\VMO_{\nu}$ in dimensions $d\geq 2$, although they can be shown to equivalent in the special dimension $d=1$.

Both \cite{CLLV,LacLi2021} prove the ``only if'' direction of Theorem \ref{thm:LacLi2021} via the median method, which imposes the restriction to real-valued symbols $b$. (See also \cite{LLW}, where the containment of the commutator $[b,H]$ in the still smaller Schatten class $\mathcal{S}_2(L^2_\mu,L^2_\lambda)\subsetneq\mathcal{K}(L^2_\mu,L^2_\lambda)$ is characterized in terms of the membership of the symbol in a suitable Besov space of Bloom type; however, we will not pursue this interesting direction any further in the present work.)

In the fractional setting thus far only one-weight commutator estimates have been studied and only for homogeneous SIOs
\begin{equation*}
  T_\Omega f(x)=\int_{\R^d}\frac{\Omega(x-y)}{\abs{x-y}^d}f(y)\ud y,\qquad\Omega(u)=\Omega(\frac{u}{\abs{u}}),
\end{equation*}
for which Guo, He, Wu and Yang \cite{GuoHeWuYang2021} proved the following

\begin{thm}[\cite{GuoHeWuYang2021}]\label{thm:Guoetal2021} Let $T_{\Omega}$ be a homogeneous $\SIO$ with $\Omega\in L^\infty(\mathbb{S}^{d-1};\C)$ that does not change sign on some open subset of the unit sphere $\mathbb{S}^{d-1}$. Let $b\in L^1_{\loc}(\R^d;\R).$ Let $1<p<q<\infty$ with $\alpha/d = 1/p-1/q$ and suppose that $[w]_{A_{p,q}} < \infty.$ Then, there holds that 
	\begin{align}\label{eq:Guo}
		[b,T_{\Omega}]\in \calK(L^{p}_w, L^q_{w})\quad\mbox{iff}\quad b\in\VMO^{\alpha},
	\end{align}
where $\VMO^{\alpha} = \VMO_{1}^{\alpha},$ see Definition \ref{defn:VMOw} below.
\end{thm}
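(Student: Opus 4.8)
Here $\mu=\lambda=w$, so $\nu\equiv1$ and the oscillation is $\calO^{\alpha}(b;Q)=\ell(Q)^{-\alpha}\fint_Q\abs{b-\ave{b}_Q}$; we may and do assume $0<\alpha<1$ (when $d\geq2$ the Lipschitz endpoint $\alpha=1$ needs a routine variant, and for $\alpha>1$ the space $\BMO^{\alpha}$ reduces to the constants and the claim is vacuous). Two facts are used throughout: the off-diagonal one-weight upper bound $\Norm{[b,T_{\Omega}]}{L^p_w\to L^q_w}\lesssim\Norm{b}{\BMO^{\alpha}}$ (a consequence of the fractional sparse domination of commutators), and the weighted boundedness $\Norm{M_{\alpha}f}{L^q_w}\lesssim\Norm{f}{L^p_w}$ of the fractional maximal operator; both hold because $[w]_{A_{p,q}}<\infty$.

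\textbf{The ``if'' direction.} By the upper bound, $b\mapsto[b,T_{\Omega}]$ is a bounded linear map $\BMO^{\alpha}\to\bddlin(L^p_w,L^q_w)$, and $\calK(L^p_w,L^q_w)$ is norm-closed in $\bddlin(L^p_w,L^q_w)$; since $\VMO^{\alpha}$ is the $\BMO^{\alpha}$-closure of $C^{\infty}_c(\R^d)$, it suffices to prove $[b,T_{\Omega}]\in\calK(L^p_w,L^q_w)$ for $b\in C^{\infty}_c(\R^d)$. Fix such a $b$ and split the kernel of the homogeneous operator $T_{\Omega}$ as $K=K\mathbf{1}_{\{\abs{x-y}<\varepsilon\}}+K\mathbf{1}_{\{\varepsilon\leq\abs{x-y}\leq\varepsilon^{-1}\}}+K\mathbf{1}_{\{\abs{x-y}>\varepsilon^{-1}\}}$, inducing $[b,T_{\Omega}]=C^{\mathrm{in}}_{\varepsilon}+C^{\mathrm{mid}}_{\varepsilon}+C^{\mathrm{out}}_{\varepsilon}$. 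The middle operator $C^{\mathrm{mid}}_{\varepsilon}$ has kernel $(b(x)-b(y))K(x,y)\mathbf{1}_{\{\varepsilon\leq\abs{x-y}\leq\varepsilon^{-1}\}}$, which is bounded, smooth and --- since $\supp b$ is compact --- supported in a bounded subset of $\R^d\times\R^d$; hence it maps the unit ball of $L^p_w$ into a family of functions supported in one fixed ball, uniformly bounded and uniformly Lipschitz, and such a family is precompact in $L^q_w$ by a standard criterion (e.g.\ Fr\'echet--Kolmogorov for the uniform norm followed by the embedding $C(\overline B)\hookrightarrow L^q_w$). For $C^{\mathrm{in}}_{\varepsilon}$ one Taylor-expands $b(x)-b(y)=-(y-x)\cdot\nabla b(x)+O(\abs{x-y}^2)$: the first term contributes $\nabla b(x)$ times an operator with kernel of size $O(\abs{x-y}^{1-d}\mathbf{1}_{\{\abs{x-y}<\varepsilon\}})$, the remainder an operator with kernel of size $O(\abs{x-y}^{2-d}\mathbf{1}_{\{\abs{x-y}<\varepsilon\}})$, and pointwise domination by $M_{\alpha}$ (legitimate since $1-\alpha>0$ and $2-\alpha>0$) yields $\Norm{C^{\mathrm{in}}_{\varepsilon}}{L^p_w\to L^q_w}\lesssim\varepsilon^{1-\alpha}\Norm{\nabla b}{\infty}+\varepsilon^{2-\alpha}\Norm{D^2 b}{\infty}$. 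A similar domination gives that the outer truncation of $T_{\Omega}$ has $L^p_w\to L^q_w$-norm $\lesssim\varepsilon^{\alpha}$, so $\Norm{C^{\mathrm{out}}_{\varepsilon}}{L^p_w\to L^q_w}\lesssim\Norm{b}{\infty}\varepsilon^{\alpha}$. Letting $\varepsilon\to0$, $[b,T_{\Omega}]$ is the operator-norm limit of the compact operators $C^{\mathrm{mid}}_{\varepsilon}$, hence compact; for general $b\in\VMO^{\alpha}$ approximate by $C^{\infty}_c$ functions in $\BMO^{\alpha}$ and use the upper bound.

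\textbf{The ``only if'' direction.} We argue by contraposition. Compactness implies boundedness, hence $b\in\BMO^{\alpha}=\dot C^{0,\alpha}$, so only the vanishing conditions characterizing $\VMO^{\alpha}$ can fail: there exist $\delta>0$ and cubes $Q_n$ with $\calO^{\alpha}(b;Q_n)\geq\delta$ escaping in one of three ways --- $\ell(Q_n)\to0$, $\ell(Q_n)\to\infty$, or $\ell(Q_n)$ comparable to a fixed length with centres tending to infinity. In each case, using that $\Omega$ does not change sign on some open cap of $\mathbb{S}^{d-1}$ (equivalently, non-degeneracy of $K$), pick a companion cube $\wt Q_n$ with $\ell(\wt Q_n)\sim\ell(Q_n)$ and $\dist(Q_n,\wt Q_n)\sim\ell(Q_n)$ on which $K$ has a fixed sign and size $\sim\ell(Q_n)^{-d}$, and a normalized $f_n\in L^p_w$ with $\Norm{f_n}{L^p_w}=1$, supported on $Q_n$ and of the form $(\mathrm{const})\cdot\sign(b-m_{Q_n})\mathbf{1}_{Q_n}$ with $m_{Q_n}$ a median of $b$ on $Q_n$; one then checks $\Norm{\mathbf{1}_{\wt Q_n}[b,T_{\Omega}]f_n}{L^q_w}\gtrsim\delta$. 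After passing to a subsequence the $\wt Q_n$ may be taken pairwise disjoint, and a far-kernel estimate gives $\Norm{\mathbf{1}_{\wt Q_n}[b,T_{\Omega}]f_m}{L^q_w}\to0$ for fixed $n$ as $m\to\infty$ (and likewise with $n,m$ swapped), so $\Norm{[b,T_{\Omega}]f_n-[b,T_{\Omega}]f_m}{L^q_w}\gtrsim\delta$ for distinct large $n,m$. Thus $\{[b,T_{\Omega}]f:\Norm{f}{L^p_w}\leq1\}$ is not relatively compact in $L^q_w$, i.e.\ $[b,T_{\Omega}]\notin\calK(L^p_w,L^q_w)$. The appeal to the median is precisely what restricts this argument to real-valued $b$.

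\textbf{Expected main obstacle.} The delicate direction is ``only if'': one must run the non-degeneracy-and-median construction uniformly over all three escape regimes and keep every estimate quantitative in $[w]_{A_{p,q}}$ --- in particular comparing $\ave{w^q}_{\wt Q_n}$ with $\ave{w^{-p'}}_{Q_m}$ as the cubes $Q_n,\wt Q_n$ drift apart or to infinity, which is where the off-diagonal relation $\alpha/d=1/p-1/q$ genuinely couples to the weight. On the ``if'' side the only real care lies in the weighted forms of the inner and outer estimates, obtained by replacing pointwise kernel bounds with pointwise domination through $M_{\alpha}$ and then invoking $w\in A_{p,q}$.
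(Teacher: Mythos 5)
This theorem is not proved in the paper: it is stated as a citation to \cite{GuoHeWuYang2021}, and the paper's own contribution is the generalization in Theorem~\ref{thm:main1} (specialized in Corollary~\ref{cor:oneweight}), which \emph{covers} it by an entirely different route. Your sketch is, in essence, a reconstruction of the strategy of \cite{GuoHeWuYang2021} rather than of the present paper's method, and it is worth contrasting the two: for sufficiency you reduce to $b\in C^\infty_c$ via the claim that $\VMO^\alpha$ is the $\BMO^\alpha$-closure of $C^\infty_c$, whereas the paper deliberately avoids any density argument and instead estimates $[b,T_\varepsilon]$ directly using the sparse domination of Lemma~\ref{lem:sdom} together with the three vanishing conditions of $\VMO_\nu^\alpha$ --- this is what makes the paper's proof survive in the two-weight Bloom setting, where the density of $C^\infty_c$ in $\VMO_\nu^\alpha$ is far from clear; for necessity you use the median of $b$, which forces $b$ real-valued, while the paper uses the approximate-weak-factorization lower bound (Proposition~\ref{prop:osc1}) and the disjoint-support contradiction of Lemma~\ref{lem:seq}, which allows complex $b$.

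Two places in your sketch have genuine gaps for the statement as written. First, for $\Omega\in L^\infty(\mathbb{S}^{d-1})$ the truncated kernel $(b(x)-b(y))K(x,y)\mathbf 1_{\{\varepsilon\leq|x-y|\leq\varepsilon^{-1}\}}$ is merely bounded, not continuous (let alone Lipschitz) in $x$, so $C^{\mathrm{mid}}_\varepsilon$ does not map into $C(\overline B;\|\cdot\|_\infty)$, and your appeal to Arzel\`a--Ascoli via uniform Lipschitz bounds breaks down; you would instead have to verify the Fr\'echet--Kolmogorov criterion for precompactness directly in $L^q_w$, using only that the kernel is a bounded, compactly supported function of $(x,y)$ (or prove a Hilbert--Schmidt-type compactness and then transfer). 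This is precisely why the paper's own Proposition proving $[b,T_c]\in\calK$ first verifies that the \emph{truncated} kernel of a Dini-smooth $T$ is again a CZ kernel --- an option not available for rough $\Omega$. Second, in the ``only if'' direction your ``far-kernel'' estimate $\|\mathbf 1_{\wt Q_n}[b,T]f_m\|_{L^q_w}\to0$ is not justified in the small-scale regime $\ell(Q_n)\to0$, where disjoint cubes can still cluster arbitrarily close to one another; the paper sidesteps this entirely, since Lemma~\ref{lem:seq} needs only disjoint supports of the \emph{inputs} $u_i$ (never a decay estimate between $[b,T]u_n$ and $\wt Q_m$) plus Lemma~\ref{lem:comp:nec1}'s careful extraction of pairwise-disjoint major subsets. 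Finally, the density claim $\VMO^\alpha=\overline{C^\infty_c}^{\,\BMO^\alpha}$ is asserted without proof; it is true in this unweighted fractional setting, but it is a nontrivial Uchiyama-type lemma that deserves a citation.
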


(The paper \cite{GuoHeWuYang2021} also deals with $\Omega\in L^r(\mathbb{S}^{d-1};\C)$ for finite $r$, but in this case the range of admissible exponent $p,q$ and weights $w$ need to be further restricted; see \cite[Theorem 1.8]{GuoHeWuYang2021} for details.)

The weighted compactness of commutators has also been considered for bilinear operators, see e.g Wang, Xue \cite{WangXue2022}. It would be interesting to know if our methods can be used to extend the results of \cite{WangXue2022} to the multilinear Bloom-type setting, see e.g. \cite{Li2020multicom}.

The recent work of Wen \cite{Wen} also addresses the $L^p_\mu$-to-$L^q_\lambda$ compactness of certain commutators, but the fractional nature of these commutators is in the operator (a fractional integral instead of a CZO) rather than in the symbol; in \cite{Wen}, the characterizing condition for the said compactness is simply $b\in\VMO_\nu$, with the usual Bloom-type $\VMO_\nu$ space. In contrast, we will deal with $L^p_\mu$-to-$L^q_\lambda$ boundedness and compactness of $[b,T]$, where $T$ is a (non-degenerate) CZO, and the characterizing condition will be the containment of $b$ in a suitable fractional $\BMO_\nu^\alpha$ (for boundedness) or $\VMO_\nu^\alpha$ (for compactness). We will give the precise statement in the following section after introducing the necessary notation.

\section{Main definitions and results}
% Next, we give the appropriate definitions, and frame our results.
% as extensions of Theorems \ref{thm:theory2}, \ref{thm:LacLi2021} and \ref{thm:Guoetal2021} to the fractional Bloom setting.

\begin{defn}\label{defn:VMOw}For a weight $\nu$ and a parameter $\alpha\in\R,$ define the space of weighted fractional bounded mean oscillation $\BMO_{\nu}^{\alpha}$  through
	\begin{align}
		\Norm{b}{\BMO_{\nu}^{\alpha}} = \sup_{Q\in\calQ}\calO_{\nu}^{\alpha}(b;Q),\qquad \calO_{\nu}^{\alpha}(b;Q) = \nu(Q)^{-\alpha/d}\Big(\frac{1}{\nu(Q)}\int_Q\abs{b-\ave{b}_Q}\Big).
	\end{align}
%\end{defn}
%
%Similarly, we set
%\begin{defn} For a weight $\nu$ and $\alpha>0$ 
	Similarly, we define the space $\VMO_{\nu}^{\alpha}\subset \BMO_{\nu}^{\alpha}$ of weighted fractional vanishing mean oscillation through
\begin{align}\label{eq:VMOv1}
	\lim_{r\to 0}\sup_{\ell(Q)\leq r}\calO_{\nu}^{\alpha}(b;Q) = 0
\end{align}
\begin{align}\label{eq:VMOv2}
\lim_{r\to \infty}\sup_{\ell(Q)\geq r}\calO_{\nu}^{\alpha}(b;Q) = 0
\end{align}
\begin{align}\label{eq:VMOv3}
\sup_{Q\in\calQ}\lim_{\abs{x}\to \infty}\calO_{\nu}^{\alpha}(b;Q+x) = 0.
\end{align}
%\begin{align}\label{eq:VMOv2}
%	\lim_{r\to \infty}\sup_{\ell(Q)=r}\calO_{\nu}^{\alpha}(b;Q) = 0,
%\end{align}
%\begin{align}\label{eq:VMOv3}
%	\sup_{Q\in\calQ}\lim_{\abs{x}\to \infty}\calO_{\nu}^{\alpha}(b;Q) = 0.
%\end{align}
\end{defn}
This particular form of the oscillation $\calO_{\nu}^{\alpha}(b;Q)$ is related to the fact that it is notationally amenable to a weighted fractional John-Nirenberg inequality, see Appendix \ref{sect:appB}.

\begin{rem}\label{rem:vmo3eq} Sometimes \eqref{eq:VMOv3} is replaced with the following a priori stronger uniform condition
\begin{align}\label{eq:VMOv33}
\lim_{r\to \infty}\sup_{Q: \dist(Q,0)> r}\calO_{\nu}^{\alpha}(b;Q) = 0.
\end{align}
In the presence of the conditions \eqref{eq:VMOv1}, \eqref{eq:VMOv2} and under the mild assumption that $\nu$ is doubling, the conditions \eqref{eq:VMOv3} and \eqref{eq:VMOv33} are in fact equivalent. 

Indeed, by the conditions \eqref{eq:VMOv1} and \eqref{eq:VMOv2}, there exists $k>0$ such that $ \ell(Q)\in\R_+\setminus[2^{-k},2^k]$ implies $\calO_{\nu}^{\alpha}(b;Q)\leq \varepsilon.$ Hence it is enough to check \eqref{eq:VMOv33} for cubes with $\ell(Q)\in (2^{-k},2^k),$ see the left-hand side of the estimate \eqref{eq:VMOv3333} below.
% and hence to conclude \eqref{eq:VMOv33}, it is enough to show 
%\begin{align}\label{eq:VMOv333}
%	\lim_{r\to \infty}\sup_{\substack{Q\in\calQ \\ \dist(Q,0)> r \\ \ell(Q)\in[2^{-k},2^k] }}\calO_{\nu}^{\alpha}(b;Q) = 0.
%\end{align}
We now invoke the method of adjacent dyadic systems, also known as the $\frac13$-trick: there exist $3^d$ dyadic grids $\scrD_i$, where $i=1,\dots,3^d$, such that each cube $Q$ is contained in some $Q'\in\bigcup_{i=1}^{3^d}\scrD_i$ with $\ell(Q')\leq 3\ell(Q)$; see e.g. \cite[Lemma 3.2.26]{HNVW1}. From this and the fact that $\nu$ is doubling, for each $Q$ with $\ell(Q)\in (2^{-k},2^k)$ there exist $\widehat{Q}\in\cup_{j=1}^{3^d}\scrD_i^{k+10}$ such that
\begin{align}\label{eq:x}
	Q\subset \widehat{Q},\qquad \abs{Q}\sim_k\abs{\widehat{Q}},\qquad \calO_{\nu}^{\alpha}(b;Q)\lesssim_{\nu,\alpha,k} \calO_{\nu}^{\alpha}(b;\widehat{Q}).
\end{align}
Let $\{c_j\}$ enumerate the centre points of $\cup_{i=1}^{3^d}\scrD_i^{k+10}$ and denote $Q_k = [-2^{k+9},2^{k+9}]^d.$ Then, by the right-most estimate on line \eqref{eq:x} and \eqref{eq:VMOv3}, we find
\begin{equation}\label{eq:VMOv3333}
\begin{split}
  \lim_{r\to\infty}\sup_{\substack{Q: \dist(Q,0)> r \\ \ell(Q)\in (2^{-k},2^k)}}\calO_{\nu}^{\alpha}(b;Q) 
  &\lesssim_{\nu,\alpha,k} \lim_{r\to\infty}\sup_{\substack{Q: \dist(Q,0)> r \\ \ell(Q)\in (2^{-k},2^k)}}\calO_{\nu}^{\alpha}(b;\widehat{Q})    \\
	&	=\lim_{j\to \infty}\calO_{\nu}^{\alpha}(b;Q_k+c_j) = 0,
\end{split}
\end{equation}
and this concludes the proof that \eqref{eq:VMOv3} implies \eqref{eq:VMOv33}.
\end{rem}

%
%
%Currently what we can prove is the following. 
%
%
%\begin{prop} Let $T$ be a non-degenerate Calderón-Zygmund with rough kernel, let $1<p<\infty$ and $\mu,\lambda\in A_{p,p}.$ Then, there holds that 
%	\[
%\Norm{[b,T]}{L^p_{\mu}\to L^p_{\lambda}}	\sim_{[\mu]_{A_{p,p}},[\lambda]_{A_{p,p}}}\Norm{b}{\BMO_{\nu}^{0}}
%	\]
%\end{prop}
%
%\begin{prop} Let $T$ be a non-degenerate Calderón-Zygmund with variable kernel, let $1<p\leq q<\infty$ and set $\alpha/d = 1/p-1/q,$ let $\mu\in A_{p,p}$ and $\lambda\in A_{q,q}.$ Then, there holds that 
%	\[
%\Norm{[b,T]}{L^p_{\mu}\to L^q_{\lambda}}\sim_{[\mu]_{A_{p,p}},[\lambda]_{A_{q,q}}}	\Norm{b}{\BMO_{\nu}^{\alpha}}
%	\]
%\end{prop}
%
%
%\begin{prop} Suppose that $1<p<\infty$ and  $\mu,\lambda\in A_{p,p}.$ Let $T$ be a non-degenerate Calderón-Zygmund operator with \textbf{either a variable or rough kernel} and let $b\in L^1_{\loc}.$ Then, there holds that 
%	\begin{align*}
%		[b,T]\in \calK(L^p_{\mu},L^p_{\lambda})\quad\mbox{iff}\quad b\in\VMO_{\nu}.
%	\end{align*}
%\end{prop}
%
%\begin{prop} Suppose that $1<p\leq q<\infty$ and  $\mu\in A_{p,p},$ $\lambda\in A_{q,q},$ Let $T$ be a non-degenerate Calderón-Zygmund operator with \textbf{a variable kernel} and let $b\in L^1_{\loc}.$ Then, there holds that 
%	\begin{align*}
%		[b,T]\in \calK(L^p_{\mu},L^q_{\lambda})\quad\mbox{iff}\quad b\in\VMO_{\nu}^{p,q}.
%	\end{align*}
%\end{prop}
%
%What we want is to unify the above propositions into the following two 

\begin{defn}\label{defn:bloom:weight} Given two weights $\mu,\lambda$ and exponents $1<p,q<\infty,$ we define the Bloom weight 
	\begin{align}
		\nu =\nu_{p,q} = \big(\mu/\lambda\big)^{\frac{1}{1/p+1/q'}} =  \big(\mu/\lambda\big)^{\frac{1}{1+\alpha/d}}\qquad \alpha/d= 1/p-1/q.
	\end{align}
\end{defn}

Our main result is the following.
\begin{thm}\label{thm:main1} Let $T$ be a non-degenerate Calderón-Zygmund operator 
%	with \textbf{either a variable or rough kernel},  
	let $b\in L^1_{\loc}(\R^d;\C).$ Let $1<p\leq q<\infty,$ let $\alpha/d = 1/p-1/q,$ let $\mu\in A_{p,p}$ and $\lambda\in A_{q,q}$ and let $\nu = \nu_{p,q}$ be the fractional Bloom weight as in Definition \ref{defn:bloom:weight}. 	
Then, there holds that 
	\begin{align}\label{eq:main1}
			\Norm{[b,T]}{L^p_{\mu}\to L^q_{\lambda}}\sim \Norm{b}{\BMO_{\nu}^{\alpha}}
	\end{align}
%\end{thm}
%
%\begin{thm}  Let $T$ be a non-degenerate Calderón-Zygmund operator with  \textbf{either a variable or rough kernel}  and let $b\in L^1_{\loc}(\R^d;\C).$ Let $1<p\leq q<\infty,$ let $\alpha/d = 1/p-1/q,$ let $\mu\in A_{p,p},$ $\lambda\in A_{q,q}$ and let $\nu = \nu_{p,q}$ be the fractional Bloom weight as in Definition \ref{defn:bloom:weight}. Then, there holds that 
 and
	\begin{align}\label{eq:main2}
			[b,T]\in \calK(L^p_{\mu}, L^p_{\lambda})\quad\mbox{iff}\quad b\in\VMO_{\nu}^{\alpha}.
	\end{align}
\end{thm}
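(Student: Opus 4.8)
\textbf{Proof strategy for Theorem \ref{thm:main1}.}
The plan is to split the proof into the boundedness equivalence \eqref{eq:main1} and the compactness characterization \eqref{eq:main2}, and within each of these into an upper bound (the ``$\lesssim$'' / ``if'' direction) and a lower bound (the ``$\gtrsim$'' / ``only if'' direction). For the \emph{upper bound} in \eqref{eq:main1}, I would first reduce $T$ to a finite sum of dyadic shifts and paraproducts via the representation theorem of \cite{Hy1}, so that it suffices to bound $[b,\mathbb{S}]$ for a dyadic shift $\mathbb{S}$ with the norm growing at most polynomially in the complexity; then decompose $[b,\mathbb{S}]$ into paraproduct-type pieces in the style of \cite{HLW2017,LOR1}, but carrying the two \emph{different} weight characteristics $[\mu]_{A_{p,p}}$, $[\lambda]_{A_{q,q}}$ and the fractional exponent $\alpha$. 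The key analytic input at this step is a two-weight off-diagonal estimate for sparse (fractional) operators and for the relevant dyadic paraproducts with symbol in $\BMO_\nu^\alpha$; the choice $\beta=(1+\alpha/d)^{-1}$ in the Bloom weight $\nu=(\mu/\lambda)^\beta$ is exactly what makes H\"older's inequality close, so I expect this to come out cleanly once the bookkeeping is set up. Alternatively, and perhaps more cleanly, one can run a sparse-domination argument directly: dominate $[b,T]f$ pointwise by sparse forms $\sum_{Q\in\mathcal S}\langle |b-\ave{b}_Q|\,|f|\rangle_Q 1_Q$ and $\sum_Q |b-\ave{b}_Q|\langle|f|\rangle_Q 1_Q$, and then estimate these bilinear forms in $L^p_\mu\times L^{q'}_{\lambda^{-1}}$ using the fractional John--Nirenberg inequality of Appendix \ref{sect:appB} to convert the $\BMO_\nu^\alpha$ norm into the needed averaged bound.

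For the \emph{lower bound} in \eqref{eq:main1}, I would use the approximate weak factorization (awf) method as in \cite{HyLpLq}, exploiting only the non-degeneracy of the kernel: given a cube $Q$, choose (using Definition \ref{defn:nondeg}) a companion cube $Q'$ at controlled distance and comparable size on which $K(x,y)$ has a fixed sign and size $\sim \ell(Q)^{-d}$, build test functions $f$ supported near $Q'$ and $g$ near $Q$, and produce the estimate
\[
  \nu(Q)^{-\alpha/d}\frac{1}{\nu(Q)}\int_Q |b-\ave{b}_Q| \;\lesssim\; \Norm{[b,T]}{L^p_\mu\to L^q_\lambda},
\]
uniformly in $Q$; the two-weight and fractional features enter through the normalizations $\Norm{f}{L^p_\mu}$ and $\Norm{g}{L^{q'}_{\lambda^{-1}}}$, and again the precise exponent $\beta$ in $\nu$ is dictated by making these normalizations balance. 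Because the awf argument is pointwise/real-variable and never invokes a median, it applies verbatim to complex-valued $b$, which is the stated novelty; I would emphasize this point. Passing from the local estimate on all cubes to $\Norm{b}{\BMO_\nu^\alpha}$ is then immediate from the definition.

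For the compactness statement \eqref{eq:main2}, the ``only if'' direction is obtained by testing $[b,T]$ against the same normalized bumps $(f_Q,g_Q)$ used in the lower bound but now organized into sequences: if $b\notin\VMO_\nu^\alpha$, then one of the three conditions \eqref{eq:VMOv1}--\eqref{eq:VMOv3} fails, giving cubes $Q_k$ that either shrink, grow, or escape to infinity with $\calO_\nu^\alpha(b;Q_k)\gtrsim \delta$; the corresponding test functions $f_{Q_k}$ form a bounded sequence in $L^p_\mu$ whose images $[b,T]f_{Q_k}$ are ``spread out'' (their supports/scales diverge), hence have no convergent subsequence, contradicting compactness. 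The ``if'' direction is the more technical half: assuming $b\in\VMO_\nu^\alpha$, I would first prove it for a dense class of nice symbols (smooth, compactly supported, or piecewise constant) where compactness of $[b,T]$ follows from a Fr\'echet--Kolmogorov / Arzel\`a--Ascoli type criterion adapted to the weighted off-diagonal setting (equicontinuity of translations, uniform smallness at spatial infinity, and uniform smallness at small and large scales), and then approximate a general $b\in\VMO_\nu^\alpha$ in the $\BMO_\nu^\alpha$ norm by such nice symbols, using the boundedness estimate \eqref{eq:main1} to transfer: $\Norm{[b-b_n,T]}{L^p_\mu\to L^q_\lambda}\sim\Norm{b-b_n}{\BMO_\nu^\alpha}\to0$, so $[b,T]$ is a norm-limit of compact operators and hence compact. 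The density of smooth symbols in $\VMO_\nu^\alpha$ under the doubling assumption on $\nu$ (which holds since $\mu^p\in A_p$, $\lambda^q\in A_q$) is the natural fractional/Bloom analogue of the classical $C_c^\infty$-density in $\VMO$, and I would prove it by mollification plus truncation, checking that each of \eqref{eq:VMOv1}--\eqref{eq:VMOv3} is preserved. The main obstacle I anticipate is precisely the ``if'' direction: establishing the three-fold equicontinuity estimates for $[b,T]$ with $b$ a fixed nice symbol but in the \emph{two-weight off-diagonal} setting, where one cannot simply freeze the weight, requires genuinely new quantitative estimates (e.g. controlling $\Norm{(\tau_h - \mathrm{id})[b,T]f}{L^q_\lambda}$ by a gain $|h|^\eta$ times $\Norm{f}{L^p_\mu}$), and it is here that the interplay of $A_{p,p}$, $A_{q,q}$, the kernel smoothness modulus $\omega$, and the fractional scaling has to be handled with care; everything else is, I expect, a careful but routine adaptation of the $p=q$ arguments of \cite{HLW2017,LacLi2021,LOR1} together with the awf lower bound of \cite{HyLpLq}.
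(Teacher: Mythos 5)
Your outline of the boundedness equivalence \eqref{eq:main1} matches the paper: the upper bound is obtained from the Lerner--Ombrosi--Rivera-R\'{\i}os sparse domination of $[b,T]$ together with the augmentation lemma (your ``alternatively, and perhaps more cleanly, sparse domination'' is precisely the route taken; the dyadic-representation detour is not needed and not used), and the lower bound is exactly the approximate weak factorization argument applied cube by cube, balanced via Proposition~\ref{prop:bmoeq} so that the $A_{p,p}$- and $A_{q,q}$-normalizations produce the $\BMO_{\nu}^{\alpha}$ oscillation. The awf side is indeed what handles complex $b$.

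For the compactness direction ``$[b,T]\in\calK\Rightarrow b\in\VMO_{\nu}^{\alpha}$'', your plan has a gap in the mechanism. When \eqref{eq:VMOv1} or \eqref{eq:VMOv2} fails, the witnessing cubes $Q_k$ shrink or grow inside a bounded region and their supports overlap; ``supports/scales diverge'' is simply not available, so the intuitive claim that the images $[b,T]f_{Q_k}$ have no convergent subsequence does not follow. What the paper does instead is (i) pass to a sparse subcollection $\{Q_j',E_j\}$ with pairwise \emph{disjoint} major subsets $E_j$ on which $\calO_\nu^\alpha(b;E_j)\gtrsim1$ still holds (this requires the stability estimate \eqref{eq:calOEhalf} and the greedy selection in Lemma~\ref{lem:comp:nec1}), and then (ii) combine the awf lower bound with the abstract Lemma~\ref{lem:seq}: a bounded, disjointly supported sequence $u_i$ cannot have $U(u_i)\to\Phi\neq0$ if $U$ is bounded. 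The contradiction comes from the convergence given by compactness and the positive lower bound on $\|\Phi\|$, not from the nonexistence of a convergent subsequence.

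For the ``$b\in\VMO_{\nu}^{\alpha}\Rightarrow[b,T]$ compact'' direction, you propose a genuinely different route: approximate the \emph{symbol} $b$ by nice symbols in $\BMO_\nu^\alpha$-norm, show compactness for nice symbols via a weighted Fr\'echet--Kolmogorov criterion, and pass to the limit using \eqref{eq:main1}. This requires, as a lemma, that $\VMO_\nu^\alpha$ (defined by \eqref{eq:VMOv1}--\eqref{eq:VMOv3}) coincides with the $\BMO_\nu^\alpha$-closure of, say, $C_c^\infty$. That is a theorem (Uchiyama-type) in the unweighted case $\nu=1$, $\alpha=0$, but it is not established in the weighted fractional Bloom setting and is not at all immediate; this is the missing ingredient in your plan, and the weighted Fr\'echet--Kolmogorov step you flag as ``the main obstacle'' is in fact a second nontrivial ingredient. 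The paper avoids both by working on the \emph{operator} side, following Lacey--Li: write $T=T_c+T_\varepsilon$ by smooth spatial and frequency/scale truncations, prove $[b,T_c]$ compact via Arzel\`a--Ascoli on the truncated kernel operator $T_L\colon L^1(Q)\to C(Q)$, and control $\|[b,T_\varepsilon]\|$ by a sparse estimate (Proposition~\ref{prop:err:sprs}) in which the three $\VMO_\nu^\alpha$ conditions are fed in \emph{directly} to make the sparse form small on cubes that are too small, too large, or too far away. This bypasses the density question entirely. Your approach is not wrong in spirit, but as stated it rests on an unproven density theorem, whereas the paper's decomposition makes the argument self-contained.
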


\subsection{Interpretation in the one weight setting} Since we are assuming $1<p\leq q,$ it especially follows directly from Hölder's inequality that $[w]_{A_{q,q}}, [w]_{A_{p,p}}\leq [w]_{A_{p,q}},$ hence $A_{p,q}\subset A_{p,p}\cap A_{q,q},$ and we extend Theorem \ref{thm:Guoetal2021} to Calderón-Zygmund operators with variable kernel.

\begin{cor}\label{cor:oneweight} Let $T$ be a non-degenerate Calderón-Zygmund operator and $b\in L^1_{\loc}(\R^d;\C).$ Let $1<p\leq q<\infty$ with $\alpha/d = 1/p-1/q$ and suppose that $w\in A_{p,p}\cap A_{q,q}.$ Then, there holds that 
	\begin{align*}
		\Norm{[b,T]}{L^p_{w}\to L^q_{w}}\sim \Norm{b}{\BMO^{\alpha}}
	\end{align*}
and
	\begin{align*}
		[b,T]\in \calK(L^{p}_w, L^q_{w})\quad\mbox{iff}\quad b\in\VMO^{\alpha}.
	\end{align*}
\end{cor}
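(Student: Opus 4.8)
The plan is to deduce Corollary~\ref{cor:oneweight} from Theorem~\ref{thm:main1} by specializing the two weights $\mu$ and $\lambda$ to a single weight $w$ and checking that all the hypotheses of the theorem are met. First I would observe that the hypothesis $w\in A_{p,p}\cap A_{q,q}$ gives exactly $\mu:=w\in A_{p,p}$ and $\lambda:=w\in A_{q,q}$, which are precisely the weight assumptions in Theorem~\ref{thm:main1}; the remaining hypotheses ($T$ a non-degenerate CZO, $b\in L^1_{\loc}(\R^d;\C)$, $1<p\le q<\infty$, $\alpha/d=1/p-1/q$) are unchanged. The only point requiring an actual computation is the Bloom weight: with $\mu=\lambda=w$ one has
\begin{align*}
	\nu = \nu_{p,q} = \big(\mu/\lambda\big)^{1/(1+\alpha/d)} = 1^{1/(1+\alpha/d)} = 1,
\end{align*}
so the fractional oscillation $\calO_{\nu}^{\alpha}(b;Q)$ with $\nu\equiv 1$ becomes
\begin{align*}
	\calO_{1}^{\alpha}(b;Q) = \abs{Q}^{-\alpha/d}\Big(\frac{1}{\abs{Q}}\int_Q\abs{b-\ave{b}_Q}\Big) = \ell(Q)^{-\alpha}\fint_Q\abs{b-\ave{b}_Q} = \calO^{\alpha}(b;Q),
\end{align*}
matching the unweighted fractional oscillation from \eqref{defn:BMO}, since $\abs{Q}=\ell(Q)^d$. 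Consequently $\BMO_{\nu}^{\alpha}=\BMO^{\alpha}$ and $\VMO_{\nu}^{\alpha}=\VMO^{\alpha}$ (the three conditions \eqref{eq:VMOv1}--\eqref{eq:VMOv3} defining $\VMO_{1}^{\alpha}$ collapse to the classical ones, and indeed $\nu\equiv 1$ is trivially doubling).

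Plugging these identifications into \eqref{eq:main1} and \eqref{eq:main2} of Theorem~\ref{thm:main1} yields $\Norm{[b,T]}{L^p_w\to L^q_w}\sim\Norm{b}{\BMO^{\alpha}}$ and $[b,T]\in\calK(L^p_w,L^q_w)$ iff $b\in\VMO^{\alpha}$, which is the assertion of the corollary. The inclusion $A_{p,q}\subset A_{p,p}\cap A_{q,q}$ noted in the preceding paragraph (a consequence of H\"older's inequality, since $1<p\le q$ forces the relevant exponents to decrease) shows that this corollary does genuinely extend Theorem~\ref{thm:Guoetal2021}: every weight admissible there is admissible here, and here $T$ may be any non-degenerate CZO with variable kernel rather than a homogeneous SIO, and $b$ may be complex-valued.

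There is essentially no obstacle here: the corollary is a direct specialization, and the only thing to be careful about is the bookkeeping that $\nu_{p,q}=1$ when $\mu=\lambda$ regardless of the value of the exponent $1/(1+\alpha/d)$, together with the remark that $\nu\equiv 1$ satisfies the doubling hypothesis implicit in the equivalence of the two forms of the third $\VMO$ condition (Remark~\ref{rem:vmo3eq}). If one wanted to be completely self-contained one could spell out that $\nu(Q)=\abs{Q}$ for $\nu\equiv 1$ and that $\nu(Q)^{-\alpha/d}=\abs{Q}^{-\alpha/d}=\ell(Q)^{-\alpha}$, but this is routine. Thus the proof is just: set $\mu=\lambda=w$, invoke Theorem~\ref{thm:main1}, and simplify.
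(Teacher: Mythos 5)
Your argument is correct and is exactly the specialization the paper has in mind: with $\mu=\lambda=w$ one has $\nu\equiv 1$, so $\nu(Q)=\abs{Q}$, $\calO_{\nu}^{\alpha}(b;Q)=\ell(Q)^{-\alpha}\fint_Q\abs{b-\ave{b}_Q}=\calO^{\alpha}(b;Q)$, hence $\BMO_{\nu}^{\alpha}=\BMO^{\alpha}$ and $\VMO_{\nu}^{\alpha}=\VMO^{\alpha}$, and Theorem~\ref{thm:main1} gives both conclusions. The paper states the corollary without a written proof (only noting the inclusion $A_{p,q}\subset A_{p,p}\cap A_{q,q}$ to justify the comparison with Theorem~\ref{thm:Guoetal2021}), so your proposal fills in precisely the bookkeeping the authors left implicit.
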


%\subsection{On the proofs}\label{sect:method1}
%

\subsection{Notation}\label{sect:notation}
\begin{itemize}
%	\item Whenever a singular integral is not specified to be either of variable or rough kernel, we include both types, and if a specific argument applies only to one type, then we always mention this.
%	
	\item Whenever we have fixed exponents $p,q$ and weights $\mu,\lambda,$ we will always without exception denote $\alpha/d = 1/p-1/q$ and $\nu = \nu_{p,q},$ as in Definition \ref{defn:bloom:weight}.
	
	\item When $p=q$ and $\alpha = 0$ we drop the superscript $\alpha.$
	
	\item When $w = 1$ is the Lebesgue measure, we do not mark it.
	\item We denote $L^1_{\loc}(\R^d;\C) = L^1_{\loc},$ $\int_{\R^d} = \int,$ and so on, mostly leaving out the ambient space whenever this information is obvious or unimportant.
	\item We denote averages with
	$
	\langle f \rangle_A = \fint_A f= \frac{1}{|A|} \int_A f,
	$
	where $|A|$ denotes the Lebesgue measure of the set $A$. The indicator function of a set $A$ is denoted by $1_A$. 
	
	\item We denote various dyadic grids with the symbol $\mathscr{D}.$
	\item We denote various sparse collections with the symbol $\mathscr{S}.$
	\item We denote $A \lesssim B$, if $A \leq C B$ for some constant $C>0$ depending only on the dimension of the underlying space, on integration exponents, on sparse constants, on constants from the kernel estimates, on constants depending on the weights $\mu,\lambda,$ and on other absolute constants appearing in the assumptions that we do not care about.
	 Then  $A \sim B$, if $A \lesssim B$ and $B \lesssim  A.$ Moreover, subscripts on constants ($C_{a,b,c,...}$) and quantifiers ($\lesssim_{a,b,c,...}$) signify their dependence on those subscripts. 	
	 \item 	
	 We again emphasize that the implicit constants in the previous point are allowed to depend on the weights $\mu$ and $\lambda,$ hence our theorem statements appear e.g. as Theorem \ref{thm:main1} and Corollary \ref{cor:oneweight} do. Sometimes in order to make the proofs easier to follow we indicate the weight dependence in the implicit constant as $\lesssim_{[\mu]_{A_{p,p}},[\lambda]_{A_{q,q}}},$ for example.
\end{itemize}

\section{On the Bloom weight}
%\begin{defn}For a pair of weights $\mu,\lambda$ and exponents $1<p,q<\infty$ define the Bloom  weight as
%	\begin{align}
%	 	\nu = \big(\frac{\mu}{\lambda}\big)^{\frac{1}{1/p+1/q'}},
%	\end{align}
%where we note that $1/p+1/q' = 1+\alpha/d,$ provided that $\alpha/d = 1/p-1/q.$
%\end{defn}
In the following Proposition \ref{prop:bloom1} we gather the basic properties of the fractional Bloom weight of Definition \ref{defn:bloom:weight}.

\begin{prop}\label{prop:bloom1} Suppose that $1<p,q<\infty,$ $\mu\in A_{p,p}$ and $\lambda\in A_{q,q}.$ Then, there holds that 
	\begin{align}\label{eq:bloom1}
		1\leq \frac{\mu^p(Q)^{\frac{1}{p}}\lambda^{-q'}(Q)^{\frac{1}{q'}}}{\nu(Q)^{\frac{1}{p}+\frac{1}{q'}}}\leq [\mu]_{A_{p,p}} [\lambda]_{A_{q,q}}
	\end{align}
	and 
	\begin{align}\label{eq:bloom111}
		[\nu]_{A_{s(p,q)}}^{2/s(p,q)} \leq  [\mu]_{A_{p,p}}[\lambda]_{A_{q,q}},\qquad \nu^{1/s(p,q)}=(\mu/\lambda)^{\frac{1}{2}},
		% = [(\mu/\lambda)^{\frac{1}{2}}]_{A_{s(p,q),s(p,q)}}^2
	\end{align} 
	where
	\begin{align}\label{eq:bloom1111}
		s(p,q) = 1 + \frac{1/q+1/p'}{1/p+1/q'} = \frac{2}{1+\alpha/d}.
%	 = 2\frac{pq'}{p+q'}	,\qquad s'(p,q) = 2\frac{p'q}{p'+q} = \frac{2}{1-\alpha/d}.
	\end{align}
\end{prop}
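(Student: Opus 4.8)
The plan is to prove the two displays \eqref{eq:bloom1} and \eqref{eq:bloom111}--\eqref{eq:bloom1111} essentially by unwinding the definitions and applying H\"older's inequality carefully, so the whole proposition is really a bookkeeping exercise. First I would establish \eqref{eq:bloom1}. Recall $\nu = (\mu/\lambda)^\beta$ with $\beta = (1/p+1/q')^{-1}$, so that $\nu^{1/p+1/q'} = \mu/\lambda$ pointwise. The left inequality $1\le \mu^p(Q)^{1/p}\lambda^{-q'}(Q)^{1/q'}/\nu(Q)^{1/p+1/q'}$ should follow from H\"older's inequality applied to the product $\nu = \mu^{\beta/p}\cdot\mu^{\beta/q'}\cdot\lambda^{-\beta/p}\cdot\lambda^{-\beta/q'}$ or, more cleanly, by writing $\nu(Q) = \int_Q \mu^{\beta/p+\beta/q'}\lambda^{-\beta/p-\beta/q'}\,\fint$... actually the cleanest route: since $1/p+1/q'$ may be written as $\theta\cdot p' \cdot (1/p) + (1-\theta)\cdots$; I will instead use the multiplicative form of H\"older, namely $\fint_Q fg \le (\fint_Q f^{a})^{1/a}(\fint_Q g^{a'})^{1/a'}$, with $f = \mu^{?}$, $g = \lambda^{-?}$ and exponents chosen so that the powers match $\mu^p$ on $Q$ with total exponent $1/p$ and $\lambda^{-q'}$ with total exponent $1/q'$. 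The point is that $\nu(Q)^{1/p+1/q'} = \left(\fint_Q (\mu/\lambda)^\beta \right)^{1/p+1/q'}|Q|^{1/p+1/q'}$ and one wants to compare this with $\left(\fint_Q \mu^p\right)^{1/p}\left(\fint_Q\lambda^{-q'}\right)^{1/q'}|Q|^{1/p+1/q'}$; the $|Q|$ powers cancel and what remains is exactly an application of H\"older with the two exponents $p/\beta$-type and $q'/\beta$-type, which one checks are conjugate after the identity $\beta(1/p+1/q')=1$. The right inequality is then immediate: $\bigl(\fint_Q\mu^p\bigr)^{1/p}\bigl(\fint_Q\lambda^{-q'}\bigr)^{1/q'}$ factors as $\bigl[\bigl(\fint_Q\mu^p\bigr)^{1/p}\bigl(\fint_Q\mu^{-p'}\bigr)^{1/p'}\bigr]\cdot\bigl[\bigl(\fint_Q\lambda^q\bigr)^{1/q}\bigl(\fint_Q\lambda^{-q'}\bigr)^{1/q'}\bigr]$ divided by the cross terms $\bigl(\fint_Q\mu^{-p'}\bigr)^{1/p'}\bigl(\fint_Q\lambda^{q}\bigr)^{1/q}$, and bounding the two bracketed factors by $[\mu]_{A_{p,p}}$ and $[\lambda]_{A_{q,q}}$ respectively, while bounding the cross term below by $1$ via H\"older, gives the claimed $[\mu]_{A_{p,p}}[\lambda]_{A_{q,q}}$ upper bound — but one must be careful that the cross term that appears is $\ge \nu(Q)^{1/p+1/q'}$-compatible. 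I would organize this by dividing through by $\nu(Q)^{1/p+1/q'}$ from the start and tracking the three H\"older applications.

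Next, for \eqref{eq:bloom111} and \eqref{eq:bloom1111}: first verify the arithmetic identity $s(p,q) = 1 + \frac{1/q+1/p'}{1/p+1/q'} = \frac{2}{1+\alpha/d}$. Writing $1/p+1/q' = 1 - \alpha/d$ (using $\alpha/d = 1/p - 1/q = 1/p + 1/q' - 1$) and $1/q + 1/p' = 1 + \alpha/d$ by the symmetric computation, the sum is $\frac{(1-\alpha/d)+(1+\alpha/d)}{1-\alpha/d} = \frac{2}{1-\alpha/d}$ — so I should double-check the sign convention; with $\alpha/d = 1/p - 1/q \ge 0$ we have $1/p + 1/q' = 1/p + 1 - 1/q = 1 + \alpha/d$ and $1/q + 1/p' = 1/q + 1 - 1/p = 1 - \alpha/d$, hence $s(p,q) = 1 + \frac{1-\alpha/d}{1+\alpha/d} = \frac{2}{1+\alpha/d}$, matching the paper. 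Then $\nu^{1/s} = (\mu/\lambda)^{\beta/s}$ and since $\beta = 1/(1+\alpha/d) = s/2$ we get $\beta/s = 1/2$, confirming $\nu^{1/s} = (\mu/\lambda)^{1/2}$ as stated. For the $A_{s}$-characteristic bound, I would write, for a cube $Q$,
\[
\bave{\nu}_Q^{1/s}\bave{\nu^{-s'/s}}_Q^{1/s'}
= \Bigl(\bave{(\mu/\lambda)^{s/2}}_Q^{1/s}\Bigr)\Bigl(\bave{(\mu/\lambda)^{-s/2}}_Q^{1/s'}\Bigr)^{?}
\]
— wait, $[\nu]_{A_s}$ requires $\bave{\nu}_Q\bave{\nu^{1-s'}}_Q^{s-1}$, and raising to $2/s$ and using $\nu = (\mu/\lambda)^{s/2}$, together with $A_{p,p}$ applied to $\mu$ and $A_{q,q}$ applied to $\lambda$ (noting $\mu^{s/2}$ relates to $\mu^p$ and $\lambda^{-s/2}$ to $\lambda^{-q'}$ after checking exponent compatibility), should produce the bound $[\mu]_{A_{p,p}}[\lambda]_{A_{q,q}}$. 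The key sub-step is the factorization $\bave{(\mu/\lambda)^{s/2}}_Q \le \bave{\mu^{s/2\cdot\tau}}_Q^{1/\tau}\bave{\lambda^{-s/2\cdot\tau'}}_Q^{1/\tau'}$ for an appropriately chosen exponent $\tau$, chosen so that $s\tau/2 = p$ (thus $\tau = 2p/s = p(1+\alpha/d)$) and then checking $s\tau'/2 = q'$, i.e. that the H\"older conjugate works out — this is the only genuine computation and amounts to verifying $\frac{1}{2p/s} + \frac{1}{(2p/s)'} = 1$ is consistent with the $q'$ target, which is where the precise value of $s(p,q)$ is used.

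The main obstacle, such as it is, will be purely organizational: choosing the H\"older exponents in each of the two parts so that (i) the resulting integrands are exactly $\mu^p$, $\mu^{-p'}$, $\lambda^q$, $\lambda^{-q'}$ matching the two given characteristics $[\mu]_{A_{p,p}}$ and $[\lambda]_{A_{q,q}}$, and (ii) the conjugacy relations among those exponents are automatically satisfied by the algebraic identities $\beta(1/p+1/q')=1$ and $s = 2\beta$. I would therefore front-load the proof with a short lemma-free paragraph recording these two identities and the consequences $1/p+1/q' = 1+\alpha/d$, $1/q+1/p' = 1-\alpha/d$, $\beta = s/2$, $\tau := 2p/s$, $\tau' = 2q'/s$ (after verification), and then the two displays fall out by mechanical substitution. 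Because the proposition is invoked later only through these inequalities, I would not optimize constants; any bound of the form $[\mu]_{A_{p,p}}[\lambda]_{A_{q,q}}$ (or a fixed power thereof) suffices, which gives some slack in the exponent juggling.
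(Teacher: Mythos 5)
Your overall strategy is the same as the paper's -- H\"older plus the weight characteristics plus exponent bookkeeping -- and the left inequality of \eqref{eq:bloom1}, the identity \eqref{eq:bloom1111}, and the pointwise verification $\nu^{1/s}=(\mu/\lambda)^{1/2}$ are all handled correctly (after your mid-paragraph sign correction). The genuine gap is in the right inequality of \eqref{eq:bloom1}. You factor out the cross term $\ave{\mu^{-p'}}_Q^{1/p'}\ave{\lambda^q}_Q^{1/q}$ and propose ``bounding it below by $1$ via H\"older,'' but that cannot work: it only yields $\ave{\mu^p}_Q^{1/p}\ave{\lambda^{-q'}}_Q^{1/q'}\le[\mu]_{A_{p,p}}[\lambda]_{A_{q,q}}$, without the factor $\ave{\nu}_Q^{1/p+1/q'}$, which may be arbitrarily small. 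You do flag that the cross term must be ``$\nu(Q)^{1/p+1/q'}$-compatible,'' but ``dividing through from the start'' does not describe a mechanism for producing the needed bound.

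What is actually required -- and what the paper does -- is two distinct steps. First, a second H\"older in the dual direction applied to $(\lambda/\mu)^{\gamma}$ with $\gamma=(1/q+1/p')^{-1}$, the mirror of your first H\"older, gives $\ave{\mu^{-p'}}_Q^{1/p'}\ave{\lambda^q}_Q^{1/q}\ge\bave{\nu^{-t}}_Q^{(1/p+1/q')/t}$ where $t=\tfrac{1/p+1/q'}{1/q+1/p'}$; this produces an average of a \emph{negative} power of $\nu$. Second -- and this ingredient is entirely absent from your proposal -- Jensen's inequality applied to the convex map $x\mapsto x^{-t}$ converts $\bave{\nu^{-t}}_Q^{-1/t}\le\ave{\nu}_Q$, which is what finally places $\ave{\nu}_Q^{1/p+1/q'}$ on the right side. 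Once both H\"older estimates are in hand, the paper gets \eqref{eq:bloom111} essentially for free by multiplying them: $\ave{\nu}_Q\bave{\nu^{-t}}_Q^{1/t}\le([\mu]_{A_{p,p}}[\lambda]_{A_{q,q}})^{(1/p+1/q')^{-1}}$, which is precisely the $A_s$ condition since $t=s'/s$ for $s=1+t^{-1}=s(p,q)$. Your separate exponent-matching argument for the $A_s$ characteristic would eventually work but re-derives the same two H\"older inequalities; and without the Jensen step neither \eqref{eq:bloom1} nor the route through it to \eqref{eq:bloom111} closes.
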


\begin{rem} There holds that 
	%	$s(p,q) = 2,$ when $p=q;$ that $s(p,q)>2,$ when $p<q;$ that $s(p,q)<2,$ when $p>q.$ 
	\begin{align*}
		s(p,q) < 2,\quad if\quad p<q;\qquad
		s(p,q) = 2,\quad if\quad p=q; \qquad
		s(p,q) > 2,\quad if\quad p>q,
	\end{align*}
and especially for each $1<p,q<\infty$ that $\nu = \nu_{p,q}\in A_{\infty}.$ 
\end{rem}
\begin{proof}  The first identity on line \eqref{eq:bloom1111} will be automatically checked while verifying the left claim on line \eqref{eq:bloom111}, while the second is simple algebra. Next, we check \eqref{eq:bloom1}.
	By Hölders inequality applied with the exponents
	\[
	\frac{1/p}{1/p+1/q'}+\frac{1/q'}{1/p+1/q'} = 1
	\] we have 
	\begin{align*}
		\ave{\nu}_Q^{1/p+1/q'} &= \Bave{\big(\frac{\mu}{\lambda}\big)^{(1/p+1/q')^{-1}}}_Q^{1/p+1/q'} \\ 
		&\leq \Big(\bave{\mu^{p}}_Q^{\frac{1/p}{1/p+1/q'}}\bave{\lambda^{-q'}}_Q^{\frac{1/q'}{1/p+1/q'}}\Big)^{1/p+1/q'} = \ave{\mu^p}_Q^{\frac{1}{p}}\ave{\lambda^{-q'}}_Q^{\frac{1}{q'}}.
	\end{align*}	
	For the other direction, suppose that $\mu\in A_{p,p},$ $\lambda\in A_{q,q}.$  Then, there holds that 
	\begin{align*}
		\ave{\mu^p}_Q^{\frac{1}{p}}\ave{\lambda^{-q'}}_Q^{\frac{1}{q'}} \leq [\mu]_{A_{p,p}} [\lambda]_{A_{q,q}}
		\big( 	\ave{\mu^{-p'}}_Q^{\frac{1}{p'}}\ave{\lambda^{q}}_Q^{\frac{1}{q}} \big)^{-1},
	\end{align*}
	and by Hölder's inequality
	\begin{align*}
		\Big( 	\ave{\mu^{-p'}}_Q^{\frac{1}{p'}}\ave{\lambda^{q}}_Q^{\frac{1}{q}} \Big)^{-1} \leq \Bave{\big(\frac{\lambda}{\mu}\big)^{\frac{1}{1/q+1/p'}}}_Q^{-(1/q+ 1/p')} = \Big( \bave{\nu^{-t(p,q)}}_Q^{-\frac{1}{t(p,q)}} \Big)^{1/p+1/q'},
	\end{align*} 
	where we denote $t(p,q) = \frac{1/p+1/q'}{1/q+1/p'}.$
	Since $x\mapsto x^{-\beta}$ is convex, for any $\beta\geq 0,$ by Jensen's inequality
	\begin{align*}
		\big( \bave{\nu^{-\frac{1}{t(p,q)}}}_Q^{-t(p,q)} \big)^{\frac{1}{p}+\frac{1}{q'}}\leq  \ave{\nu}_Q^{\frac{1}{p}+\frac{1}{q'}}
	\end{align*}
	and we conclude that 
	\begin{align*}
		\ave{\mu^p}_Q^{\frac{1}{p}}\ave{\lambda^{-q'}}_Q^{\frac{1}{q'}} \leq [\mu]_{A_{p,p}} [\lambda]_{A_{q,q}}\ave{\nu}_Q^{\frac{1}{p}+\frac{1}{q'}}.
	\end{align*}
	Hence, we have checked both sides of \eqref{eq:bloom1}.
	%To check the second claim, simply note that 
	%\[
	%\frac{	\ave{\mu^p}_Q^{\frac{1}{p}}\ave{\lambda^{-q'}}_Q^{\frac{1}{q'}} }{\ave{\nu}_Q^{\frac{1}{p}+\frac{1}{q'}}} = \frac{\mu^p(Q)^{\frac{1}{p}}\lambda^{-q'}(Q)^{\frac{1}{q'}}}{\nu(Q)^{\frac{1}{p}+\frac{1}{q'}}}
	%\]
	%\end{proof}
	
	%\begin{prop}  Suppose that $1<p,q<\infty$ and $\mu\in A_{p,p}$ and $\lambda\in A_{q,q}.$ Then,
	%there holds that 
	%\begin{align*}
	%	\nu\in A_{s(p,q)}, \qquad 	s(p,q) = 1 + \frac{1/q+1/p'}{1/p+1/q'}.
	%\end{align*}
	%and  especially that
	%$\nu \in A_{\infty}.$
	%\end{prop}
	%\begin{proof} 
	We already saw above that 
	\begin{align*}
		\ave{\nu}_Q^{1/p+1/q'} \leq \ave{\mu^p}_Q^{\frac{1}{p}}\ave{\lambda^{-q'}}_Q^{\frac{1}{q'}},\qquad 	\bave{\nu^{-t(p,q)}}_Q^{\frac{1/p+1/q'}{t(p,q)}}\leq 	\ave{\mu^{-p'}}_Q^{\frac{1}{p'}}\ave{\lambda^{q}}_Q^{\frac{1}{q}}
	\end{align*}
	and multiplying these estimates together, then raising to the power $(1/p+1/q')^{-1},$ gives
	\begin{align}\label{eq:nu3}
		\bave{\nu}_Q 	\bave{\nu^{-t(p,q)}}_Q^{\frac{1}{t(p,q)}} \leq  \big([\mu]_{A_{p,p}}[\lambda]_{A_{q,q}}\big)^{(1/p+1/q')^{-1}}< \infty. 
	\end{align}
	Since $(s\mapsto s'/s):(1,\infty)\to (0,\infty)$ is bijective, \eqref{eq:nu3} shows that $\nu\in A_{s(p,q)}$ for the unique $s(p,q)\in (1,\infty)$ such that $\frac{s(p,q)'}{s(p,q)} := t(p,q).$ Solving for $s(p,q),$ we find
	\begin{align}
		s(p,q) = 1 + t(p,q)^{-1} = 1 + \frac{1/q+1/p'}{1/p+1/q'},
	\end{align}
	and by \eqref{eq:nu3} that $[\nu]_{A_{s(p,q)}} \leq ([\mu]_{A_{p,p}}[\lambda]_{A_{q,q}})^{1/(1/p+1/q')}.$
	By $s(p,q)(1/p+1/q') = 2,$ we have
	\begin{align}
		(\mu/\lambda)^{\frac{1}{2}} = (\mu/\lambda)^{(1/p+1/q')^{-1}s(p,q)^{-1}} = \nu^{1/s(p,q)}.
	\end{align}
	Denoting $s = s(p,q)$ and using that $[\nu]_{A_{s}} = [\nu^{1/s}]_{A_{s,s}}^{s}$, we find that 
	\begin{align*}
		[(\mu/\lambda)^{\frac{1}{2}}]_{A_{s,s}} = [\nu^{\frac{1}{s}}]_{A_{s,s}} = [\nu]_{A_s}^{\frac{1}{s}}\leq \big([\mu]_{A_{p,p}}[\lambda]_{A_{q,q}}\big)^{(1/p+1/q')^{-1}s^{-1}} = \big([\mu]_{A_{p,p}}[\lambda]_{A_{q,q}}\big)^{\frac{1}{2}},
	\end{align*} 
	which gives \eqref{eq:bloom111}.
\end{proof}

\begin{defn} Let $\mu,\lambda$ be weights and $b\in L^1_{\loc}(\R^d;\C)$ and define 
	\begin{align}\label{norm:2wBMOsub}
		\Norm{b}{\BMO^{p,q}_{\mu,\lambda}} = \sup_{Q\in\calQ}\frac{1}{\mu^p(Q)^{\frac{1}{p}}\lambda^{-q'}(Q)^{\frac{1}{q'}}}\int_Q\abs{b-\ave{b}_Q}
	\end{align}
\end{defn}
As an immediate corollary of line \eqref{eq:bloom1} we get the following
\begin{prop}\label{prop:bmoeq} Suppose that $1<p,q<\infty$ and $\mu\in A_{p,p},$ $\lambda\in A_{q,q}$ and let $\alpha/d = 1/p-1/q.$ Let $b\in L^1_{\loc}(\R^d;\C).$ Then, there holds that
\begin{align}
	\Norm{b}{\BMO_{\nu}^{\alpha}}\sim	\Norm{b}{\BMO^{p,q}_{\mu,\lambda}}.
\end{align}
\end{prop}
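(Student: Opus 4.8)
The plan is to derive Proposition \ref{prop:bmoeq} as an immediate consequence of the two-sided pointwise comparison \eqref{eq:bloom1} from Proposition \ref{prop:bloom1}, which I may assume. The two $\BMO$-type quantities $\Norm{b}{\BMO_{\nu}^{\alpha}}$ and $\Norm{b}{\BMO^{p,q}_{\mu,\lambda}}$ differ only in the normalizing factor that multiplies the raw mean oscillation $\int_Q\abs{b-\ave{b}_Q}$: the former uses $\nu(Q)^{\alpha/d}\cdot\nu(Q) = \nu(Q)^{1+\alpha/d} = \nu(Q)^{1/p+1/q'}$, while the latter uses $\mu^p(Q)^{1/p}\lambda^{-q'}(Q)^{1/q'}$. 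So the whole statement reduces to showing that these two normalizing factors are comparable, uniformly over cubes $Q\in\calQ$, with constants depending only on $p,q$ and the weight characteristics.

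First I would unwind the definitions. By Definition \ref{defn:VMOw}, $\calO_\nu^\alpha(b;Q) = \nu(Q)^{-\alpha/d}\nu(Q)^{-1}\int_Q\abs{b-\ave{b}_Q}$; using $1+\alpha/d = 1/p+1/q'$ (recorded in Definition \ref{defn:bloom:weight}), this is exactly $\nu(Q)^{-(1/p+1/q')}\int_Q\abs{b-\ave{b}_Q}$. The quantity defining $\BMO^{p,q}_{\mu,\lambda}$ in \eqref{norm:2wBMOsub} is $\big(\mu^p(Q)^{1/p}\lambda^{-q'}(Q)^{1/q'}\big)^{-1}\int_Q\abs{b-\ave{b}_Q}$. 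Hence
\begin{align*}
	\frac{\calO_\nu^\alpha(b;Q)}{\big(\mu^p(Q)^{1/p}\lambda^{-q'}(Q)^{1/q'}\big)^{-1}\int_Q\abs{b-\ave{b}_Q}} = \frac{\mu^p(Q)^{1/p}\lambda^{-q'}(Q)^{1/q'}}{\nu(Q)^{1/p+1/q'}},
\end{align*}
and by \eqref{eq:bloom1} this ratio lies in the interval $[1,\,[\mu]_{A_{p,p}}[\lambda]_{A_{q,q}}]$ for every $Q$. Taking the supremum over $Q\in\calQ$ of both $\calO_\nu^\alpha(b;Q)$ and the corresponding quantity for $\BMO^{p,q}_{\mu,\lambda}$ then yields
\begin{align*}
	\Norm{b}{\BMO^{p,q}_{\mu,\lambda}} \leq \Norm{b}{\BMO_\nu^\alpha} \leq [\mu]_{A_{p,p}}[\lambda]_{A_{q,q}}\Norm{b}{\BMO^{p,q}_{\mu,\lambda}},
\end{align*}
which is the asserted equivalence $\Norm{b}{\BMO_\nu^\alpha}\sim\Norm{b}{\BMO^{p,q}_{\mu,\lambda}}$ (recall that implicit constants are permitted to depend on the weights).

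There is essentially no obstacle here: the entire content is packaged in Proposition \ref{prop:bloom1}, and the remaining work is bookkeeping with the exponent identities $\beta = (1+\alpha/d)^{-1}$ and $1/p+1/q' = 1+\alpha/d$. The one point worth stating carefully is that the comparison of the normalizing factors is \emph{pointwise in $Q$} with uniform constants, so that passing to the supremum is legitimate and preserves the two-sided bound; this is exactly what \eqref{eq:bloom1} provides, since its bounds are independent of $Q$. Consequently the proof is just the display chain above together with the remark that the supremum is monotone.
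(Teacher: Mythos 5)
Your proof is correct and is precisely the argument the paper intends: the paper states Proposition \ref{prop:bmoeq} as an immediate corollary of the two-sided estimate \eqref{eq:bloom1}, and you have simply written out the bookkeeping with the exponent identity $1+\alpha/d=1/p+1/q'$ that the paper leaves implicit. Nothing to add.
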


\section{Boundedness} 

\subsection{Lower bound}
Different versions of the following Proposition \ref{prop:osc1} appear throughout the literature, the one with complex valued symbols attained through the approximate weak factorization argument is from \cite{HyLpLq}. Before formulating it, let us fix the following convention. If a function $\psi$ has support on $Q,$ then we write $\psi = \psi_Q$, in particular if $\psi$ is supported on a major subset $E = E_Q\subset Q,$ we write $\psi = \psi_{E} = \psi_{E_Q}.$

\begin{defn} A collection of sets $\mathscr{S}$ is said to be $\gamma$-sparse, for $\gamma\in(0,1),$ if there exists a pairwise disjoint collection $\mathscr{S}_E = \big\{U_E : U_E\subset U \in\mathscr{S},\,\abs{U_E}>\gamma \abs{U}\big\}.$ 
Furthermore, when $\abs{U_E}>\gamma \abs{U},$  we speak of $\gamma$-major subsets. When the parameter $\gamma$ is of no consequence, we speak of major subsets and sparse collections.
\end{defn}

\begin{prop}[\cite{HyLpLq}]\label{prop:osc1} Let $T$ be a non-degenerate singular integral and $b\in L^1_{\loc}(\R^d;\C).$  Let $Q$ be a fixed cube. Then, there exists a cube $\wt{Q}$ such that  $\dist(Q,\wt{Q})\sim \ell(Q) = \ell(\wt{Q})$ and for any $\gamma$-major subsets $E\subset Q$ and $\wt{E}\subset \wt{Q}$ we have
	\begin{align}\label{osc1}
		\int_{E}\abs{b-\ave{b}_{E}} \lesssim \babs{\bave{[b,T]g_{\wt{E}},h_{E}}} + \babs{\bave{[b,T]h_{\wt{E}},g_E}},
	\end{align}
where the auxiliary functions satisfy
	\begin{align*}
		g_{E}= 1_{E},\qquad g_{\wt{E}} = 1_{\wt{E}},\qquad 	\abs{h_{E}}\lesssim 1_{E},\qquad \abs{h_{\wt{E}}}\lesssim 1_{\wt{E}},
	\end{align*}
	and all the above implicit constants depend only on the kernel of $T$ and the parameter $\gamma.$
\end{prop}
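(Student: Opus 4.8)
The plan is to reduce the claimed oscillation lower bound to a quantitative, testing-type estimate obtained from the non-degeneracy of the kernel, following the approximate weak factorization scheme. First I would fix the cube $Q$ and use Definition \ref{defn:nondeg}: applying non-degeneracy at the centre $y_Q$ of $Q$ with radius $r\sim\ell(Q)$ produces a point $x$ with $\abs{x-y_Q}\sim\ell(Q)$ and $\abs{K(x,y_Q)}\gtrsim\ell(Q)^{-d}$; by the Dini-continuity of $K$ in its first variable this lower bound persists, $\abs{K(x',y')}\gtrsim\ell(Q)^{-d}$, for $x'$ in a cube $\wt Q$ of sidelength $c\ell(Q)$ centred near $x$ and $y'$ throughout $Q$, provided the geometric constant $c$ is chosen small enough relative to the Dini modulus $\omega$. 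This $\wt Q$ is the cube in the statement, and by construction $\dist(Q,\wt Q)\sim\ell(Q)=\ell(\wt Q)$, with a definite sign of (the real or imaginary part of) $K$ on $\wt Q\times Q$ after possibly multiplying by a unimodular constant.

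Next I would set up the auxiliary functions. Take $g_E=1_E$ and $g_{\wt E}=1_{\wt E}$ as required. To extract the oscillation of $b$ we choose $h_E$ to be a unimodular multiple of $1_E$ adapted to the phase of $b-\ave{b}_E$, i.e. essentially $h_E=\overline{\sign(b-\ave{b}_E)}\,1_E$ (truncated so that $\abs{h_E}\le 1_E$), and similarly a bounded $h_{\wt E}$ supported on $\wt E$; the point of allowing complex $h$'s here — and the reason the argument avoids the median — is precisely that $\int_E\abs{b-\ave b_E}=\Re\int_E (b-\ave b_E)\overline{h_E}$ can be produced by pairing against such an $h_E$. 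Then I would expand the two commutator pairings on the right of \eqref{osc1} using the off-support kernel representation, which is legitimate because $E\subset Q$ and $\wt E\subset\wt Q$ are separated. Writing $\bave{[b,T]g_{\wt E},h_E}=\int_E\int_{\wt E}(b(x)-b(y))K(x,y)\ud y\,\overline{h_E(x)}\ud x$ and symmetrically for the other term, the sum telescopes: the cross terms involving $b(x)$ against $g$ and $b(y)$ against $h$ (and vice versa) combine so that, after subtracting the constants $\ave{b}_E$ and $\ave{b}_{\wt E}$, the dominant contribution is $\big(\text{something}\sim\ell(Q)^{-d}\abs{\wt E}\big)\cdot\int_E\abs{b-\ave b_E}$ coming from the region where $K$ has a fixed sign, while the terms carrying the oscillation of $b$ over $\wt Q$ appear with a free constant that can be chosen to make them vanish or be absorbed.

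The main obstacle, and where the care is needed, is controlling the error terms: one must show that the contribution of $\abs{b(y)-\ave b_{\wt E}}$ over $\wt E$ can be discarded, and that replacing the true kernel by its lower bound is legitimate, i.e. that the part of the double integral where $\Re(cK)$ is small or of the wrong sign is genuinely lower-order. This is handled by the standard awf trick of running the same construction with the roles of $E$ and $\wt E$ swapped — the cube $\wt{\wt Q}$ associated to $\wt Q$ can be taken to be (a translate/dilate of) $Q$ itself — and adding the resulting two inequalities, so that the unwanted $\int_{\wt E}\abs{b-\ave b_{\wt E}}$ term on one side is absorbed into the left-hand side of the other; the sizes $\abs{E}>\gamma\abs{Q}$, $\abs{\wt E}>\gamma\abs{\wt Q}$ of the major subsets enter here to guarantee the absorption constant is a true constant depending only on $\gamma$ and the kernel. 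The smoothness (Dini) estimate is what makes the sign of $K$ stable across $\wt Q\times Q$ and thus keeps all implicit constants depending only on $\omega$ and $\gamma$, as claimed. Since Proposition \ref{prop:osc1} is quoted from \cite{HyLpLq}, in the paper itself this would be cited rather than reproved; the sketch above is how one would reconstruct it.
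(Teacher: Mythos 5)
The paper does not prove Proposition \ref{prop:osc1}; it is imported verbatim from \cite{HyLpLq} (note the bracket citation in the theorem environment), so there is no in-paper proof to compare against. Your sketch is asked to reconstruct the approximate-weak-factorization (awf) argument from that reference, and at the level of the opening moves it does so correctly: non-degeneracy of $K$ at scale $\ell(Q)$ produces a point, the Dini modulus stabilizes $K$ on a whole cube $\wt Q\times Q$ with $\ell(\wt Q)\sim\ell(Q)$ and $\dist(Q,\wt Q)\sim\ell(Q)$, and (crucially, as you note) one pairs against $h_E$ carrying the complex phase of $b-\ave{b}_E$, which is exactly what replaces the median device and handles $\C$-valued $b$.

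The gap is in the error control, which is the actual heart of the lemma, and your description of it contains two inaccuracies. First, there is no region ``where $\Re(cK)$ is small or of the wrong sign'' to discard: by taking the constant $c$ in the definition of $\wt Q$ small enough in terms of the Dini modulus, $\Re(\zeta K)\gtrsim\ell(Q)^{-d}$ holds uniformly on $\wt Q\times Q$ after multiplication by a fixed unimodular $\zeta$, so the kernel never needs to be ``replaced by its lower bound''; this worry is not part of the argument. Second, and more seriously, the ``swap $E\leftrightarrow\wt E$ and add, then absorb'' step does not close. After the first pairing $\langle[b,T]1_{\wt E},h_E\rangle$ one is left (using the free constant $c$ in $[b,T]=[b-c,T]$) with an error of size $\int_{\wt E}\abs{b-c}$, which is \emph{not} $\int_{\wt E}\abs{b-\ave{b}_{\wt E}}$ unless $c=\ave{b}_{\wt E}$ is chosen, and the analogous swapped inequality produces an error $\int_E\abs{b-c'}$ with a generally different $c'$; there is no a priori comparison between, say, $\int_E\abs{b-c'}$ and the left-hand side $\int_E\abs{b-\ave{b}_E}$ with a constant smaller than $1$, so the claimed absorption is not justified. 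What the awf argument in \cite{HyLpLq} actually does at this point is exploit the \emph{second} pairing $\langle[b,T]h_{\wt E},g_E\rangle$ in a non-symmetric way: $h_{\wt E}$ is chosen adaptively (with a phase tied to $T^*h_E$ on $\wt E$, using once more the uniform lower bound and the freedom in $c$) so that the $\wt E$-error from the first pairing is directly estimated by the second commutator pairing rather than reabsorbed by symmetry. Your sketch names both pairings but leaves $h_{\wt E}$ essentially unused, which is exactly the missing ingredient.
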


\begin{prop}\label{prop:bdd:lb}Let $1<p,q<\infty$, let $\mu\in A_{p,p},\lambda\in A_{q,q},$ let $\alpha/d = 1/p-1/q,$ let $T$ be a non-degenerate singular integral and $b\in L^1_{\loc}(\R^d;\C).$  Then, there holds that 
	\begin{align}
		\Norm{b}{\BMO_{\nu}^{\alpha}}	\lesssim \Norm{[b,T]}{L^p_{\mu}\to L^q_{\lambda}},
	\end{align}
	where the implicit constant depends only on the weights and the kernel of $T.$
\end{prop}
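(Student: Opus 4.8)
The plan is to combine Proposition~\ref{prop:osc1} with the equivalence $\Norm{b}{\BMO_\nu^\alpha}\sim\Norm{b}{\BMO^{p,q}_{\mu,\lambda}}$ from Proposition~\ref{prop:bmoeq}: it then suffices to bound, for an arbitrary cube $Q$, the normalized oscillation $\mu^p(Q)^{-1/p}\lambda^{-q'}(Q)^{-1/q'}\int_Q\abs{b-\ave{b}_Q}$ by $\Norm{[b,T]}{L^p_\mu\to L^q_\lambda}$, which we may assume finite. Fix $Q$ and apply Proposition~\ref{prop:osc1} with the trivial major subsets $E=Q$ and $\wt E=\wt Q$ (each is a $\gamma$-major subset of itself once $\gamma<1$, say $\gamma=1/2$). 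This produces a cube $\wt Q$ with $\dist(Q,\wt Q)\sim\ell(Q)=\ell(\wt Q)$ and functions $g_{\wt Q}=1_{\wt Q}$, $\abs{h_{\wt Q}}\lesssim 1_{\wt Q}$, $\abs{h_Q}\lesssim 1_Q$ such that
\begin{equation*}
  \int_Q\abs{b-\ave{b}_Q}\lesssim \babs{\bave{[b,T]1_{\wt Q},h_Q}}+\babs{\bave{[b,T]h_{\wt Q},1_Q}}.
\end{equation*}

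Next I would estimate the two pairings by duality. With respect to the integral pairing, the dual of $L^q_\lambda=L^q(\lambda^q)$ is $L^{q'}_{\lambda^{-1}}=L^{q'}(\lambda^{-q'})$, so H\"older's inequality together with the assumed boundedness of $[b,T]$ gives
\begin{equation*}
  \babs{\bave{[b,T]1_{\wt Q},h_Q}}\leq \Norm{[b,T]1_{\wt Q}}{L^q_\lambda}\Norm{h_Q}{L^{q'}_{\lambda^{-1}}}\lesssim \Norm{[b,T]}{L^p_\mu\to L^q_\lambda}\,\mu^p(\wt Q)^{1/p}\lambda^{-q'}(Q)^{1/q'},
\end{equation*}
where we used $\Norm{1_{\wt Q}}{L^p_\mu}=\mu^p(\wt Q)^{1/p}$ and $\Norm{h_Q}{L^{q'}_{\lambda^{-1}}}\lesssim\lambda^{-q'}(Q)^{1/q'}$; the second pairing $\babs{\bave{[b,T]h_{\wt Q},1_Q}}$ is estimated in exactly the same way and obeys the same bound. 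No definitional subtlety arises in \eqref{osc1}: since $1_{\wt Q},h_{\wt Q}$ and $h_Q,1_Q$ have disjoint supports separated by a positive distance comparable to $\ell(Q)$, the pairings are absolutely convergent integrals expressible through the off-support kernel representation, and the only input from the boundedness of $[b,T]$ is the displayed estimate.

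Finally I would remove the mismatch between $\wt Q$ and $Q$ in the factor $\mu^p(\wt Q)^{1/p}$. Since $\mu\in A_{p,p}$ we have $\mu^p\in A_p$ with $[\mu^p]_{A_p}=[\mu]_{A_{p,p}}^p$, hence $\mu^p$ is doubling; and because $\dist(Q,\wt Q)\sim\ell(Q)=\ell(\wt Q)$, both $Q$ and $\wt Q$ lie inside a common cube $Q^*$ with $\ell(Q^*)\sim\ell(Q)$, so $\mu^p(\wt Q)\leq\mu^p(Q^*)\lesssim\mu^p(Q)$. Combining the displays, dividing by $\mu^p(Q)^{1/p}\lambda^{-q'}(Q)^{1/q'}$, and taking the supremum over $Q$ yields $\Norm{b}{\BMO^{p,q}_{\mu,\lambda}}\lesssim\Norm{[b,T]}{L^p_\mu\to L^q_\lambda}$, and Proposition~\ref{prop:bmoeq} then gives the claim, with the implicit constant depending only on the kernel of $T$ and on $[\mu]_{A_{p,p}},[\lambda]_{A_{q,q}}$. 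This is the easy direction and there is no genuine obstacle; the only points requiring care are selecting the correct dual space $L^{q'}_{\lambda^{-1}}$ so that the weight powers match the normalization defining $\BMO^{p,q}_{\mu,\lambda}$, and the doubling step passing from $\wt Q$ back to $Q$. Note that $p\leq q$ is never used, consistently with Proposition~\ref{prop:bdd:lb} being stated for all $1<p,q<\infty$.
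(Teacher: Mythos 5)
Your proof is correct and follows the same route as the paper: reduce to $\BMO^{p,q}_{\mu,\lambda}$ via Proposition~\ref{prop:bmoeq}, apply Proposition~\ref{prop:osc1} with $E=Q$, $\wt E=\wt Q$, estimate the pairings by H\"older duality $L^q_\lambda$--$L^{q'}_{\lambda^{-1}}$ together with the assumed boundedness of $[b,T]$, and finally use doubling of $\mu^p$ to replace $\mu^p(\wt Q)$ by $\mu^p(Q)$. No gaps.
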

\begin{proof} By Proposition \ref{prop:bmoeq} it is enough to check the lower bound with $\Norm{b}{\BMO^{p,q}_{\mu,\lambda}}$ in place of $\Norm{b}{\BMO^{\alpha}_{\nu}}.$ In Proposition \ref{prop:osc1} we let $E = Q$ and $\wt{E} = \wt{Q}$ and then estimate the right-hand side of \eqref{osc1} as
	\begin{align*}
		\babs{\bave{[b,T]g_{\wt{Q}},h_{Q}}} &= \babs{\bave{\lambda[b,T]g_{\wt{Q}},h_{Q}\lambda^{-1}}} 
		\lesssim \bNorm{[b,T]g_{\wt{Q}}}{L^q_{\lambda}}\bNorm{h_Q}{L^{q'}_{\lambda^{-1}}} \\
		&\leq \bNorm{[b,T]}{L^p_{\mu}\to L^q_{\lambda}}\bNorm{g_{\wt{Q}}}{L^p_{\mu}}\bNorm{h_Q}{L^{q'}_{\lambda^{-1}}}  \lesssim \bNorm{[b,T]}{L^p_{\mu}\to L^q_{\lambda}} \big( \mu^p(\wt{Q})\big)^{\frac{1}{p}}\big(\lambda^{-q'}(Q)\big)^{\frac{1}{q'}}.
	\end{align*}
	The second term is estimated identically.
	To conclude it remains to show that $$\mu^p(\wt{Q})\lesssim_{ [\mu]_{A_{p,p}}}\mu^p(Q),$$ which follows easily from the definition of $A_{p,p}$ weights, or from the doubling property.
	%Let $\widehat{Q}\supset Q\cup\wt{Q}$ be such that $\ell(\widehat{Q})\sim\ell(Q).$  Then, by a standard procedure there holds that 
	%\begin{align*}
	% \big( \mu^p(\wt{Q})\big)^{\frac{1}{p}}&\leq \big( \mu^p(\widehat{Q})\big)^{\frac{1}{p}} = \bave{\mu^p}_{\widehat{Q}}^{\frac{1}{p}}\bave{\mu^{-p'}}_{\widehat{Q}}^{\frac{1}{p'}} \bave{\mu^{-p'}}_{\widehat{Q}}^{-\frac{1}{p'}} \abs{\widehat{Q}}^{1-1/p'} \\ 
	% &\leq [\mu]_{A_{p,p}}\bave{\mu^{-p'}}_{\widehat{Q}}^{-\frac{1}{p'}} \abs{\widehat{Q}}^{1/p} \leq [\mu]_{A_{p,p}}\bave{\mu^{-p'}}_{Q}^{-\frac{1}{p'}} \Big(\frac{\abs{\widehat{Q}}}{\abs{Q}}\Big)^{\frac{1}{p}} \abs{\widehat{Q}}^{1/p}  \\ &\leq  [\mu]_{A_{p,p}}\bave{\mu^{p}}_{Q}^{\frac{1}{p}} \Big(\frac{\abs{\widehat{Q}}}{\abs{Q}}\Big)^{\frac{1}{p}} \abs{\widehat{Q}}^{1/p}  \lesssim [\mu]_{A_{p,p}}\mu^p(Q)^{\frac{1}{p}}.
	%\end{align*}
	%The second term is estimated identically.
\end{proof}

\subsection{Upper bound}
We turn to the upper bound. 
We will involve the unweighted sparse operator and the two-weight fractional sparse operator
\begin{align*}
	\mathcal{A}_{\mathscr{S}}f =  	\mathcal{A}(f;\mathscr{S}) =\sum_{Q\in\mathscr{S}}\ave{f}_Q1_Q,\qquad 	\mathcal{A}_{\mu,\lambda}^{p,q}(f;\mathscr{S}) = \sum_{P\in\mathscr{S}}\frac{\mu^p(P)^{\frac{1}{p}}\lambda^{-q'}(P)^{\frac{1}{q'}}}{\abs{P}}\ave{f}_P1_P.
\end{align*}

%The sharp weighted estimate for sparse operators is by now classical, see for example Lerner \cite{Lerner2015}.
%
%\begin{lem}\label{lem:sharps} Let $\mathscr{S}\subset\mathscr{D}$ be a sparse collection. Then, there holds that
%	\begin{align}\label{eq:sharp:sparse}
%	\Norm{\mathcal{A}_{\mathscr{S}}}{L^p_w\to L^p_w}\lesssim [w]_{A_{p,p}}^{\min\{p,p'\}}.
%	\end{align}
%\end{lem}

The following sparse domination of commutators was first obtained by Lerner, Ombrosi, Rivera-Ríos \cite{LOR1} as a step towards a Bloom type upper bound for commutators.
\begin{lem}\label{lem:sdom} Let $T$ be a Calderón-Zygmund operator. Suppose that $f\in L^1_{\loc}.$ Then, there exist sparse collections $\mathscr{S}_j\subset\mathscr{D}_j$, where $j=1,\dots,3^d$, such that 
	\begin{align}
		\abs{[b,T]f} \lesssim \sum_{j=1}^{3^d}\Big(\mathcal{A}_{b,\mathscr{S}_j}\abs{f} + \mathcal{A}^*_{b,\mathscr{S}_j}\abs{f}\Big),\qquad  \mathcal{A}^*_{b,\mathscr{S}_j}f=  \sum_{Q\in\script{S}_j} \bave{\abs{b-\ave{b}_Q}f}_Q1_Q.
	\end{align}
	where $\mathcal{A}_{b,\mathscr{S}_j}$ is the adjoint of $\mathcal{A}_{b,\mathscr{S}_j}^*.$ Sometimes we denote $\mathcal{A}^*_{b,\mathscr{S}_j}f = \mathcal{A}^*_{b}(f;\mathscr{S}_j).$
%	the sparse operators are
%	\begin{align}
%		\mathcal{A}_{b,\mathscr{S}_j}f = \sum_{Q\in\script{S}_j}\abs{b-\ave{b}_Q}\ave{f}_Q1_Q,\qquad \mathcal{A}^*_{b,\mathscr{S}_j}f =  \sum_{Q\in\script{S}_j} \ave{\abs{b-\ave{b}_Q}f}_Q1_Q.
%	\end{align}	
%	Furthermore, for each $Q\in\script{S}_j$ there holds that
%	\begin{align}\label{sdom:osc}
%		\abs{b-\ave{b}_Q}1_Q \lesssim \sum_{\substack{P\in\mathscr{S}_j \\ P\subset Q}}\ave{\abs{b-\ave{b}_P}}_P1_P.
%		%	,\qquad 	\ave{\abs{f}}_{P} \gtrsim \frac{1}{2}\ave{\abs{f}}_{\Pi_{\mathscr{S}}P} 
%	\end{align}
%	%where $\Pi_{\mathscr{S}}P$ denotes the parent of $P.$
\end{lem}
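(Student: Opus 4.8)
This is the commutator sparse domination of Lerner--Ombrosi--Rivera-R\'ios \cite{LOR1}, and I would recall its proof via Lerner's stopping-time / grand-maximal-truncated-operator method. By a routine density argument it is enough to prove the pointwise inequality for $f$ bounded with compact support. The heart of the matter is the following \emph{local} estimate: for every cube $Q_0$ there is a $\tfrac12$-sparse collection $\mathscr{S}(Q_0)$ of dyadic subcubes of $Q_0$ such that for a.e.\ $x\in Q_0$
\begin{equation*}
	\babs{[b,T](f1_{3Q_0})(x)}\lesssim \sum_{Q\in\mathscr{S}(Q_0)}\Big(\babs{b(x)-\ave{b}_{3Q}}\ave{\abs f}_{3Q}+\bave{\abs{b-\ave{b}_{3Q}}\abs f}_{3Q}\Big)1_Q(x),
\end{equation*}
with implicit constant depending only on $\Norm{T}{\CZO}$ and $d$.

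To prove the local estimate I would start from the elementary identity $[b,T](f1_{3Q_0})=(b-\ave{b}_{3Q_0})T(f1_{3Q_0})-T\big((b-\ave{b}_{3Q_0})f1_{3Q_0}\big)$, legitimate since $\ave{b}_{3Q_0}$ is a constant, and then run Lerner's stopping construction simultaneously for $g=f1_{3Q_0}$ and $h=(b-\ave{b}_{3Q_0})f1_{3Q_0}$. Writing $\mathcal{M}_{T,Q_0}$ for the grand maximal truncated operator associated with $T$ on $Q_0$, one selects the maximal dyadic cubes $P\subsetneq Q_0$ (of sidelength $\le\ell(Q_0)/3$, a harmless normalization ensuring $3P\subseteq3Q_0$) on which one of $\ave{\abs g}_{3P}$, $\ave{\abs h}_{3P}$, $Mg$, $Mh$, $\mathcal{M}_{T,Q_0}g$, $\mathcal{M}_{T,Q_0}h$ exceeds a large multiple $\Lambda$ of the corresponding $3Q_0$-average. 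Since $M$ and $\mathcal{M}_{T,Q_0}$ are of weak type $(1,1)$ with constants controlled by $\Norm{T}{\CZO}$ --- the bound for $\mathcal{M}_{T,Q_0}$ being Lerner's key lemma, and precisely where the size and Dini smoothness hypotheses of Definition \ref{defn:SIO:var} are used --- a sufficiently large $\Lambda$ forces the selected cubes $\{P_j\}$ to satisfy $\sum_j\abs{P_j}\le\tfrac12\abs{Q_0}$. On the good set $Q_0\setminus\bigcup_j P_j$ one controls $\abs{Tg(x)}$ and $\abs{Th(x)}$ by $\Lambda\ave{\abs f}_{3Q_0}$ and $\Lambda\bave{\abs{b-\ave{b}_{3Q_0}}\abs f}_{3Q_0}$ respectively, using $\abs{Tu}\lesssim \mathcal{M}_{T,Q_0}u+Mu$ a.e.\ on $Q_0$ for $u$ supported in $3Q_0$; this gives the $Q=Q_0$ summand. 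On each $P_j$ one writes $f1_{3Q_0}=f1_{3P_j}+f1_{3Q_0\setminus3P_j}$: the first piece equals $f1_{3P_j}$, so $[b,T](f1_{3P_j})$ is handled recursively by the same local estimate on $P_j$; for the second piece the kernel size estimate together with the fact that the dyadic parent of $P_j$ is \emph{not} a stopping cube again bounds the relevant quantities by $\Lambda$ times the $3Q_0$-averages, contributing once more to the $Q=Q_0$ summand. Iterating over the generations $\{P_j\},\{P_{j,k}\},\dots$ builds $\mathscr{S}(Q_0)$, which is $\tfrac12$-sparse because the major subsets $Q\setminus\bigcup\{\text{children of }Q\}$ are pairwise disjoint of measure $\ge\tfrac12\abs Q$; the recursion terminates a.e.\ since the $n$-th generation has total measure $\le 2^{-n}\abs{Q_0}$.

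Finally I would globalize and pass to dyadic grids. Exhausting $\R^d$ by a disjoint countable family of cubes large enough to localize $f$ correctly (or taking an increasing limit of such cubes) glues the local families into one sparse collection $\mathscr{S}$ for which the displayed bound holds on all of $\R^d$. Then the $\tfrac13$-trick (\cite[Lemma 3.2.26]{HNVW1}) provides $3^d$ shifted dyadic grids $\mathscr{D}_1,\dots,\mathscr{D}_{3^d}$ such that each $3Q$, $Q\in\mathscr{S}$, lies inside some $\widehat Q\in\mathscr{D}_{j(Q)}$ with $\ell(\widehat Q)\le 3\ell(Q)$; replacing $3Q$ by $\widehat Q$ and $1_Q$ by $1_{\widehat Q}$, using $\ave{\abs f}_{3Q}\lesssim\ave{\abs f}_{\widehat Q}$ together with $\babs{\ave b_{3Q}-\ave b_{\widehat Q}}\lesssim\bave{\abs{b-\ave b_{\widehat Q}}}_{\widehat Q}$, and grouping the $\widehat Q$ by their grid yields sparse families $\mathscr{S}_j\subset\mathscr{D}_j$ (sparseness survives with a worse constant). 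One then reads off $\sum_{Q\in\mathscr{S}_j}\babs{b(\cdot)-\ave b_Q}\ave{\abs f}_Q1_Q=\mathcal{A}_{b,\mathscr{S}_j}\abs f$ and $\sum_{Q\in\mathscr{S}_j}\bave{\abs{b-\ave b_Q}\abs f}_Q1_Q=\mathcal{A}^*_{b,\mathscr{S}_j}\abs f$, with the asserted adjointness $\mathcal{A}_{b,\mathscr{S}}=(\mathcal{A}^*_{b,\mathscr{S}})^*$ an immediate Fubini computation, and the lemma follows.

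The main obstacle is the stopping step: establishing the weak-type $(1,1)$ bound for the grand maximal truncated operator $\mathcal{M}_{T,Q_0}$ with constant $\lesssim\Norm{T}{\CZO}$ (the genuine use of the kernel hypotheses), and keeping the bookkeeping of the various $3$-fold dilates straight through both the recursion and the passage to adjacent dyadic grids --- in particular verifying that the ``far'' pieces $T(g1_{3Q_0\setminus3P_j})$ are truly dominated by the stopping thresholds and that the commutator form survives the switch from $3Q$-averages to $\widehat Q$-averages. The remaining ingredients --- the commutator splitting, the recursion, and the sparseness bookkeeping --- are routine.
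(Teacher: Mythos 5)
Your proposal is a correct reconstruction of the argument; the paper does not prove this lemma itself but simply cites \cite{LOR1}, and the route you describe --- the commutator splitting $[b,T]g=(b-\ave{b}_{3Q_0})Tg-T((b-\ave{b}_{3Q_0})g)$, Lerner's stopping-time construction via the grand maximal truncated operator with its weak $(1,1)$ bound, the resulting $\tfrac12$-sparse local estimate and recursion, and the globalization through the $\tfrac13$-trick with the average-switching bookkeeping --- is precisely the method of \cite{LOR1}. Nothing further is needed.
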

 The sparse collections in Lemma \ref{lem:sdom} depend on the initial function $f,$ but the implicit constants are independent of $f.$
Furthermore, Lerner et al. \cite{LOR1} implicitly provided the following augmentation.
\begin{lem}\label{lem:aug} Let $\mathscr{S}\subset\scrD$ be an arbitrary sparse collection of cubes. Then, for each function $b\in L^1_{\loc},$ there exists a sparse collection sandwiched as $\scrD\supset \widehat{\mathscr{S}} \supset \mathscr{S}$ such that for each $Q\in\mathscr{S}$
	\begin{align*}
	\abs{b-\ave{b}_{Q}}1_{Q}\lesssim \sum_{\substack{P\in \widehat{\mathscr{S}} \\ P\subset Q}}\bave{\abs{b-\ave{b}_P}}_P1_P.
\end{align*}
\end{lem}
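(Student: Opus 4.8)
The plan is to carry out a Calder\'on--Zygmund stopping-time construction inside each cube of $\mathscr{S}$, recentering the function at every generation, exactly in the spirit of \cite{LOR1}. For a fixed $Q\in\mathscr{S}$ I would grow a tree $\calT(Q)\subset\scrD$ with root $Q$ as follows: given a cube $L$ already in the tree, its children are the maximal dyadic cubes $P\subsetneq L$ that either satisfy $\ave{\abs{b-\ave{b}_L}}_P>2\ave{\abs{b-\ave{b}_L}}_L$ or already belong to $\mathscr{S}$; one recurses only into the children of the first type, while children lying in $\mathscr{S}$ are kept as leaves, since their interiors will be handled by their own trees. Maximality of the stopping cubes gives, for every $L$, that the union of its children of the first type has measure $<\tfrac12\abs{L}$, so $E_L:=L\setminus\bigcup\{P:P\text{ child of }L\}$ satisfies $\abs{E_L}>\tfrac12\abs{L}$; moreover the dyadic parent of any child $P$ of $L$ is not itself a stopping cube, which yields $\ave{\abs{b-\ave{b}_L}}_P\le 2^{d+1}\ave{\abs{b-\ave{b}_L}}_L$. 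Finally set $\widehat{\mathscr{S}}:=\bigcup_{Q\in\mathscr{S}}\calT(Q)$, which visibly satisfies $\scrD\supset\widehat{\mathscr{S}}\supset\mathscr{S}$.

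For the pointwise bound, fix $Q\in\mathscr{S}$ and a Lebesgue point $x\in Q$, and follow the chain $Q=L_0\supsetneq L_1\supsetneq\cdots$ of cubes of $\widehat{\mathscr{S}}$ containing $x$, where at each step $L_{i+1}$ is the child of $L_i$ containing $x$ in the relevant tree, passing, whenever $L_i\in\mathscr{S}$, to the tree $\calT(L_i)$. One checks that for a.e.\ $x$ this chain is finite, terminating at a cube $L_N$ with $x\in E_{L_N}$; there, since no stopping cube of $L_N$ contains $x$, the Lebesgue differentiation theorem gives $\abs{b(x)-\ave{b}_{L_N}}\le 2\ave{\abs{b-\ave{b}_{L_N}}}_{L_N}$. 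Telescoping,
\begin{align*}
\abs{b(x)-\ave{b}_Q}\le \abs{b(x)-\ave{b}_{L_N}}+\sum_{i=1}^N\abs{\ave{b}_{L_i}-\ave{b}_{L_{i-1}}}\le 2\ave{\abs{b-\ave{b}_{L_N}}}_{L_N}+\sum_{i=1}^N\ave{\abs{b-\ave{b}_{L_{i-1}}}}_{L_i},
\end{align*}
and combining with the parent estimate $\ave{\abs{b-\ave{b}_{L_{i-1}}}}_{L_i}\le 2^{d+1}\ave{\abs{b-\ave{b}_{L_{i-1}}}}_{L_{i-1}}$ one arrives at $\abs{b(x)-\ave{b}_Q}\lesssim\sum_{i=0}^N\ave{\abs{b-\ave{b}_{L_i}}}_{L_i}\le\sum_{P\in\widehat{\mathscr{S}},\,P\subseteq Q}\ave{\abs{b-\ave{b}_P}}_P1_P(x)$, which is the claimed inequality once we pass from a.e.\ $x\in Q$ to functions.

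It remains to see that $\widehat{\mathscr{S}}$ is sparse. I would assign to each $P\in\widehat{\mathscr{S}}$ the major subset $E_P$ coming from the tree in which $P$ is the root if $P\in\mathscr{S}$, and from the unique tree $\calT(R^\ast)$ containing $P$, with $R^\ast$ the minimal cube of $\mathscr{S}$ containing $P$, if $P\notin\mathscr{S}$; in both cases $\abs{E_P}>\tfrac12\abs{P}$. The point of truncating each tree as soon as it meets $\mathscr{S}$ is precisely that these sets are pairwise disjoint: for nested $P'\subsetneq P$ in $\widehat{\mathscr{S}}$ one checks, splitting on whether $P$ and $P'$ lie in $\mathscr{S}$, that $P'$ is always contained in one of the children of $P$ in the tree relevant to $P$, whence $E_{P'}\subseteq P\setminus E_P$. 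Thus $\widehat{\mathscr{S}}$ is $\tfrac12$-sparse.

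The main obstacle is this last bookkeeping step: a naive union of recentered stopping trees over an arbitrary sparse $\mathscr{S}$ need not be sparse, since nested chains in $\mathscr{S}$ can make the packing constant blow up, and the remedy of stopping each tree at cubes of $\mathscr{S}$ and attaching each produced cube to the single tree of its minimal $\mathscr{S}$-ancestor must be set up so that it is simultaneously compatible with the telescoping above. Verifying the disjointness of the sets $E_P$ and the a.e.\ finiteness of the chains are the two points requiring care; both are routine once the construction is pinned down and are implicit in \cite{LOR1}.
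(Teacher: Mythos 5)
Your construction of $\widehat{\mathscr{S}}$, the chaining/telescoping estimate, and the observation that one must truncate each stopping tree at the cubes of $\mathscr{S}$ (to prevent nested $\mathscr{S}$-chains from wrecking the packing constant) are all sound and essentially reproduce the augmentation implicit in \cite{LOR1}. In particular, your parent estimate $\ave{\abs{b-\ave{b}_{L_{i-1}}}}_{L_i}\le 2^{d+1}\ave{\abs{b-\ave{b}_{L_{i-1}}}}_{L_{i-1}}$ does hold for children of either type, since in both cases the dyadic parent of $L_i$ is either $L_{i-1}$ itself or a non-stopping cube.

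The gap is in the sparsity verification. You claim that $E_L:=L\setminus\bigcup\{P:P\ \text{child of}\ L\}$ satisfies $\abs{E_L}>\tfrac12\abs{L}$ because the first-type children cover measure $<\tfrac12\abs{L}$; but $E_L$ also excises all second-type children, namely the maximal $\mathscr{S}$-cubes strictly below $L$, and these are in no way constrained to have small total measure inside $L$. Concretely, in $d=1$ take $\mathscr{S}=\{[0,1],[0,\tfrac12],[\tfrac12,1]\}$, which is sparse (Carleson constant $2$), and any $b$ oscillating so little on $[0,1]$ that no first-type stopping occurs at the root; then both dyadic children of $[0,1]$ are second-type children, so $E_{[0,1]}=\emptyset$. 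Hence the sets $E_P$ you define need not be $\gamma$-major for any $\gamma>0$, and "$\widehat{\mathscr{S}}$ is $\tfrac12$-sparse" does not follow. The conclusion that $\widehat{\mathscr{S}}$ is sparse \emph{is} true, but should be proved via the Carleson-packing characterization: $\widehat{\mathscr{S}}\setminus\mathscr{S}=\bigcup_{Q\in\mathscr{S}}\big(\calT(Q)\setminus\mathscr{S}\big)$, and within each tree the first-type cubes decay geometrically, $\sum_{P\in\calT(Q)\setminus\mathscr{S}}\abs{P}\le\abs{Q}$; combining this with the Carleson condition for $\mathscr{S}$ itself (applied to arbitrary dyadic $R$) and the fact that, for $P\in\calT(Q)\setminus\mathscr{S}$ with $P\subseteq R\subsetneq Q$, the cube $Q$ is forced to be the \emph{minimal} $\mathscr{S}$-cube containing $R$, one gets $\sum_{P\in\widehat{\mathscr{S}},\ P\subseteq R}\abs{P}\lesssim\abs{R}$ for all $R\in\scrD$. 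The disjoint major subsets then exist by the standard equivalence of the two notions of sparseness, but they are not your $E_P$.
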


\begin{thm}\label{thm:ub} Let $T$ be a Calderón-Zygmund operator, let $1<p\leq q<\infty$ and $\mu\in A_{p,p}$ and $\lambda\in A_{q,q}.$ Then, there holds that 
	\[
\Norm{[b,T]}{L^p_{\mu}\to L^q_{\lambda}} \lesssim	\Norm{b}{\BMO_{\nu}^{\alpha}}.
	\]
\end{thm}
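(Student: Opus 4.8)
The plan is to reduce the operator norm bound to a weighted estimate for the sparse forms appearing in Lemmas \ref{lem:sdom} and \ref{lem:aug}, and then to bound those sparse forms by $\Norm{b}{\BMO_\nu^\alpha}$ using Proposition \ref{prop:bmoeq}. By Lemma \ref{lem:sdom} it suffices to prove, uniformly over sparse collections $\mathscr{S}$, that
\[
\Norm{\mathcal{A}_b^*(f;\mathscr{S})}{L^q_\lambda}\lesssim \Norm{b}{\BMO_\nu^\alpha}\Norm{f}{L^p_\mu}
\]
together with the dual estimate $\Norm{\mathcal{A}_b(f;\mathscr{S})}{L^q_\lambda}\lesssim \Norm{b}{\BMO_\nu^\alpha}\Norm{f}{L^p_\mu}$; since $\mathcal{A}_b$ is the adjoint of $\mathcal{A}_b^*$ with respect to the unweighted pairing, the second estimate is equivalent by duality to a bound for $\mathcal{A}_{b}^*$ mapping $L^{q'}_{\lambda^{-1}}\to L^{p'}_{\mu^{-1}}$, which has the same structural form with the roles of the weights and exponents interchanged. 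So it is enough to treat $\mathcal{A}_b^*$.

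\medskip

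First I would apply Lemma \ref{lem:aug} to pass from $\mathcal{A}_b^*(f;\mathscr{S})=\sum_{Q\in\mathscr{S}}\bave{\abs{b-\ave{b}_Q}f}_Q 1_Q$ to a form controlled by $\sum_{P\in\widehat{\mathscr{S}}}\bave{\abs{b-\ave{b}_P}}_P\,\langle |f|\rangle_P 1_P$ — more precisely, using that $\widehat{\mathscr{S}}$ contains $\mathscr{S}$ and the pointwise domination $\abs{b-\ave{b}_Q}1_Q\lesssim\sum_{P\subset Q,\,P\in\widehat{\mathscr{S}}}\bave{\abs{b-\ave{b}_P}}_P 1_P$, one gets
\[
\mathcal{A}_b^*(f;\mathscr{S})\lesssim \sum_{P\in\widehat{\mathscr{S}}}\bave{\abs{b-\ave{b}_P}}_P\,\langle\abs{f}\rangle_P\,1_P.
\]
Now by Proposition \ref{prop:bmoeq} and the definition \eqref{norm:2wBMOsub} of $\BMO^{p,q}_{\mu,\lambda}$, we have $\bave{\abs{b-\ave{b}_P}}_P = \abs{P}^{-1}\int_P\abs{b-\ave{b}_P}\lesssim \Norm{b}{\BMO_\nu^\alpha}\,\mu^p(P)^{1/p}\lambda^{-q'}(P)^{1/q'}\abs{P}^{-1}$. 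Substituting this, the claim reduces to the two-weight bound for the fractional sparse operator,
\[
\BNorm{\sum_{P\in\widehat{\mathscr{S}}}\frac{\mu^p(P)^{1/p}\lambda^{-q'}(P)^{1/q'}}{\abs{P}}\langle\abs{f}\rangle_P 1_P}{L^q_\lambda}=\Norm{\mathcal{A}^{p,q}_{\mu,\lambda}(\abs{f};\widehat{\mathscr{S}})}{L^q_\lambda}\lesssim \Norm{f}{L^p_\mu},
\]
i.e. to the statement that $\mathcal{A}^{p,q}_{\mu,\lambda}:L^p_\mu\to L^q_\lambda$ boundedly whenever $\mu\in A_{p,p}$, $\lambda\in A_{q,q}$, $1<p\le q<\infty$, $\alpha/d=1/p-1/q$.

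\medskip

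This last estimate is the heart of the matter and I expect it to be the main obstacle. It is a genuinely off-diagonal ($p\le q$) two-weight bound for a fractional-type sparse operator, and the natural route is to test against a nonnegative $g\in L^{q'}_{\lambda^{-1}}$ and estimate $\sum_P \frac{\mu^p(P)^{1/p}\lambda^{-q'}(P)^{1/q'}}{\abs P}\langle\abs f\rangle_P\langle g\lambda^{-1}\rangle_P\abs P$ by a stopping-time / principal-cube decomposition adapted simultaneously to $\abs f\,d(\mu^p)$ on $L^p$ and $g\,d(\lambda^{-q'})$ on $L^{q'}$. On the principal cubes one uses the $A_{p,p}$ and $A_{q,q}$ conditions to bound $\mu^p(P)^{1/p}\lambda^{-q'}(P)^{1/q'}/\abs P$ by a product of the relevant Muckenhoupt averages; the sparsity of $\widehat{\mathscr{S}}$ supplies the geometric decay that makes the resulting double sum summable, and Hölder with exponents matched to $1/p+1/q'$ (noting $p\le q$ is exactly what makes the arithmetic close) finishes it. One could alternatively invoke known two-weight bounds for fractional sparse operators from the literature, but since the paper's self-contained spirit and the precise hypotheses ($\mu\in A_{p,p}$, $\lambda\in A_{q,q}$ rather than a single joint $A_{p,q}$ condition) may not match an off-the-shelf statement, I would likely carry out the stopping-time argument directly, using Proposition \ref{prop:bloom1} where convenient to relate the $A_{p,p}$, $A_{q,q}$ data to the $A_{s(p,q)}$ control on $\nu$.
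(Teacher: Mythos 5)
Your structural reduction matches the paper's proof exactly: sparse domination (Lemma \ref{lem:sdom}), duality to handle $\mathcal{A}_{b,\mathscr{S}}$, Lemma \ref{lem:aug} to insert the augmented sparse collection $\widehat{\mathscr{S}}$, and Proposition \ref{prop:bmoeq} to convert $\bave{\abs{b-\ave{b}_P}}_P$ into $\Norm{b}{\BMO_\nu^\alpha}\,\mu^p(P)^{1/p}\lambda^{-q'}(P)^{1/q'}/\abs{P}$, reducing everything to $\mathcal{A}_{\mu,\lambda}^{p,q}(\cdot;\widehat{\mathscr{S}}):L^p_\mu\to L^q_\lambda$.

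Two remarks on the details. First, a small imprecision: after Lemma \ref{lem:aug} what one actually gets is
\[
\mathcal{A}^*_b(f;\mathscr{S})\lesssim \sum_{Q\in\mathscr{S}}1_Q\,\Bave{\sum_{\substack{P\in\widehat{\mathscr{S}}\\ P\subset Q}}\bave{\abs{b-\ave{b}_P}}_P\,\langle\abs{f}\rangle_P\,1_P}_Q
= \mathcal{A}_{\mathscr{S}}\big(\mathcal{A}^{p,q}_{\mu,\lambda}(\abs{f};\widehat{\mathscr{S}})\big)\cdot\Norm{b}{\BMO_\nu^\alpha}\text{-type bound},
\]
rather than the pointwise bound $\mathcal{A}^*_b(f;\mathscr{S})\lesssim\sum_{P\in\widehat{\mathscr{S}}}\bave{\abs{b-\ave{b}_P}}_P\ave{\abs f}_P 1_P$ you wrote. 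The extra outer layer $\mathcal{A}_{\mathscr{S}}$ is harmless — it is bounded on $L^q_\lambda$ since $\lambda\in A_{q,q}$ — and this is precisely what the paper absorbs by citing the standard sparse bound \cite{Lerner2015}; but as stated your pointwise inequality isn't quite true.

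Second, and more to the point, the final step does not require a stopping-time or principal-cube decomposition, and the paper's argument here is substantially simpler than what you sketch. Testing $\mathcal{A}^{p,q}_{\mu,\lambda}(f;\widehat{\mathscr{S}})$ against $g$ gives $\sum_{Q}\mu^p(Q)^{1/p}\ave{\abs f}_Q\,\lambda^{-q'}(Q)^{1/q'}\ave{\abs g}_Q$; discrete H\"older with exponents $q,q'$ separates the $f$-factor from the $g$-factor; the inclusion $\ell^p\hookrightarrow\ell^q$ (here is exactly where $p\le q$ is used) converts $(\sum\ave{\abs f}_Q^q\mu^p(Q)^{q/p})^{1/q}$ into $(\sum\ave{\abs f}_Q^p\mu^p(Q))^{1/p}$; and each of the resulting sums is controlled by $\Norm{f}{L^p_\mu}$, resp.\ $\Norm{g}{L^{q'}_{\lambda^{-1}}}$, via the sparse $A_p$-Carleson embedding of Lemma \ref{lem:prePyth}. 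So the ``heart of the matter'' is a four-line computation, and the $A_{p,p}$ and $A_{q,q}$ hypotheses enter only through Lemma \ref{lem:prePyth} (which is where sparsity is used), not through any stopping-time combinatorics. Your proposed route would likely also work, but it overcomplicates a step that closes by elementary means; and since you leave it as a sketch rather than carrying it out, that step is the one genuine gap in your write-up.
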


\begin{proof}  By Lemma \ref{lem:sdom} 
	\begin{align*}
		\Norm{[b,T]}{L^p_{\mu}\to L^q_{\lambda}} &\lesssim 	\Norm{\mathcal{A}_{b,\mathscr{S}}^*}{L^p_{\mu}\to L^q_{\lambda}} + \Norm{\mathcal{A}_{b,\mathscr{S}}}{L^p_{\mu}\to L^q_{\lambda}}
	\end{align*}
and by  $ \Norm{\mathcal{A}_{b,\mathscr{S}}}{L^p_{\mu}\to L^q_{\lambda}} = \Norm{\mathcal{A}_{b,\mathscr{S}}^*}{L^{q'}_{\lambda^{-1}}\to L^{p'}_{\mu^{-1}}},$ it is enough to show the upper bound for $\mathcal{A}_{b,\mathscr{S}}^*.$
By Lemma \ref{lem:aug} and Proposition \ref{prop:bmoeq} we have 
	\begin{equation}\label{eq:sub:sharp1}
		\begin{split}
			\bave{\abs{b-\ave{b}_Q}\abs{f}}_Q&\lesssim \Bave{\sum_{\substack{P\in\widehat{\mathscr{S}}\\ P\subset Q}}\bave{\abs{b-\ave{b}_P}}_P\abs{f}1_P}_Q = \Bave{\sum_{\substack{P\in\widehat{\mathscr{S}}\\ P\subset Q}}\bave{\abs{b-\ave{b}_P}}_P\ave{\abs{f}}_P1_P}_Q\\
			& \lesssim_{[\mu]_{A_{p,p}},[\lambda]_{A_{q,q}}}  	\Norm{b}{\BMO_{\nu}^{\alpha}}\Bave{\sum_{\substack{P\in\widehat{\mathscr{S}}\\ P\subset Q}}\frac{\mu^p(P)^{\frac{1}{p}}\lambda^{-q'}(P)^{\frac{1}{q'}}}{\abs{P}}\ave{\abs{f}}_P1_P}_Q
		\end{split}
	\end{equation}	
	and hence by the boundedness of the sparse operator (see e.g. \cite{Lerner2015}) we find
	\begin{align*}
		\Norm{\mathcal{A}_{b,\mathscr{S}}^*}{L^p_{\mu}\to L^q_{\lambda}}  \lesssim_{[\mu]_{A_{p,p}},[\lambda]_{A_{q,q}}} 	\Norm{b}{\BMO_{\nu}^{\alpha}}\bNorm{f\mapsto\mathcal{A}_{\mu,\lambda
			}^{p,q}(f;\widehat{\mathscr{S}})}{L^p_{\mu}\to L^q_{\lambda}}.
	\end{align*}
	Now what remains is to estimate as follows
	\begin{align*}
		\babs{	\bave{	\mathcal{A}_{\mu,\lambda}^{p,q}(f;\widehat{\mathscr{S}}) , g}} &\leq \sum_{Q\in\widehat{\mathscr{S}}} \mu^p(Q)^{\frac{1}{p}}\ave{\abs{f}}_Q \lambda^{-q'}(Q)^{\frac{1}{q'}}\ave{\abs{g}}_Q \\
		&\leq \big( \sum_{Q\in\widehat{\mathscr{S}}} \ave{\abs{f}}_Q^q\mu^p(Q)^{\frac{q}{p}}\big)^{\frac{1}{q}}\big(\sum_{Q\in\widehat{\mathscr{S}}}\ave{\abs{g}}_Q^{q'}\lambda^{-q'}(Q)\big)^{\frac{1}{q'}} \\
		&\leq  \big( \sum_{Q\in\widehat{\mathscr{S}}} \ave{\abs{f}}_Q^p\mu^p(Q)\big)^{\frac{1}{p}}\big(\sum_{Q\in\widehat{\mathscr{S}}}\ave{\abs{g}}_Q^{q'}\lambda^{-q'}(Q)\big)^{\frac{1}{q'}} \\
		&\lesssim [\mu]_{A_{p,p}}^{p'}[\lambda^{-1}]_{A_{q',q'}}^{q}\Norm{f}{L^p_{\mu}}\Norm{g}{L^{q'}_{\lambda^{-1}}} =  [\mu]_{A_{p,p}}^{p'}[\lambda]_{A_{q,q}}^q\Norm{f}{L^p_{\mu}}\Norm{g}{L^{q'}_{\lambda^{-1}}},
	\end{align*}
	where we used $\Norm{\cdot}{\ell^q}\leq\Norm{\cdot}{\ell^p}$ (by $p\leq q$) and Lemma \ref{lem:prePyth}.
\end{proof}

\section{Compactness}
\subsection{Sufficiency}
	Let $T$ be a Calderón-Zygmund operator. In this section we show that  $b\in\VMO_{\nu}^{\alpha}$ implies $[b,T]\in\calK(L^p_{\mu},L^q_{\lambda}).$ We mimic the proof from Lacey and Li in \cite{LacLi2021} and we provide all details, some of which are different. Formally the idea is to show that
	\begin{align}\label{eq:goal}
			T = T_c + T_{\varepsilon},\qquad [b,T_c]\in \calK(L^p_{\mu},L^q_{\lambda}),\qquad \lim_{\varepsilon\to 0}\Norm{	[b,T_{\varepsilon}]}{L^p_{\mu}\to L^q_{\lambda}} = 0.
	\end{align}
	Then, we would be done by the fact that compact linear operators form a closed subspace of all bounded linear operators.
	
%	 We start decomposing the operator $T$ into the compact part and the error part.
	
We select a bump such that
	\begin{align*}
	\varphi\in C^{\infty}_c(\R^d;[0,1]),\qquad 	\varphi(x) = \begin{cases}
			1,\quad \abs{x} \leq\frac{1}{2}, \\ 
			0,\quad \abs{x}\geq 1.
		\end{cases}
	\end{align*}
We define
\begin{align*} 
	\varphi_x^{0,r}(y) = \varphi\big(\frac{x-y}{r}\big)
\end{align*}
and with $0<r<R<\infty$ partition unity as 
\begin{align*}
	1 = \varphi_x^{0,r} +  \varphi_x^{r,R} +  \varphi_x^{R,\infty},\qquad  \varphi_x^{r,R} =  \varphi_x^{0,R}-  \varphi_x^{0,r},\qquad \varphi_x^{R,\infty} = 1 - \varphi_x^{0,R}.
\end{align*}
We also denote 
\[
	\varphi^{a,b} = \varphi_0^{a,b},\qquad a,b\in[0,\infty].
\]
Then, we decompose
\begin{align*}
	T &= \big(\varphi^{0,R}+\varphi^{R,\infty}\big)T \big(\varphi^{0,R}+\varphi^{R,\infty}\big) \\ 
	&= \varphi^{0,R}T \varphi^{0,R} 
	+ \Big[  \varphi^{R,\infty}T \varphi^{R,\infty}
	+ \varphi^{R,\infty}T \varphi^{0,R}
	+ \varphi^{0,R}T \varphi^{R,\infty} \Big] =  T^{0,R} + T^{R,\infty}.
\end{align*}
The term $T^{R,\infty}$ is good as it is and the other we decompose further:
\begin{align*}
	T^{0,R} = 	T^{0,R}\big( \varphi_x^{0,r} + 	\varphi_x^{r,10R} +	\varphi_x^{10R,\infty} \big) = 	T^{0,R}\varphi_x^{0,r} + 	T^{0,R}\varphi_x^{r,10R},
\end{align*}
where we noted that $T^{0,R}\varphi_x^{10R,\infty}  = 0.$ In total, our decomposition is the following
\begin{align}
 	T =  T_c + T_{\varepsilon},\qquad  T_c = 	T^{0,R}\varphi_x^{r,10R},\qquad T_{\varepsilon} = 	T^{0,R}\varphi_x^{0,r} + T^{R,\infty},
\end{align}
where we now take the convention of writing $\varepsilon = r = R^{-1}.$

The rest of this section is devoted to proving the two claims on line \eqref{eq:goal}, and we begin with
\begin{prop} Let $1<p\leq q<\infty$, $\mu\in A_{p,p}$ and $\lambda\in A_{q,q}$. Suppose that $b\in\BMO_{\nu}^{\alpha}.$ Then
	\[
	[b,	T_c] \in \calK(L^p_{\mu},L^q_{\lambda}).
	\]
\end{prop}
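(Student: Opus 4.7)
The plan is to isolate the compactness of $[b,T_c]$ from that of $T_c$ itself via a standard finite-rank approximation, and then handle the possibly unbounded symbol $b$ by truncation. First I would read off the kernel of $T_c$ as
\[
K_c(x,y) = \varphi^{0,R}(x)\,\varphi^{0,R}(y)\,\varphi^{r,10R}(x-y)\,K(x,y),
\]
which is supported in the compact set $S = \{(x,y) : |x|,|y|\leq R,\ r/2\leq|x-y|\leq 10R\}$ and, by the size bound on $K$, satisfies $\|K_c\|_\infty \lesssim r^{-d}$ there. The Dini condition forces $\omega(t)\to 0$ as $t\to 0$, which together with the smoothness of the cutoffs makes $K_c$ uniformly continuous on $S$; in particular $T_cf$ is supported in $\{|x|\leq R\}$ for every $f$.

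With this in hand, I would first show that $T_c$ itself is compact from $L^p_\mu$ to $L^q_\lambda$. Uniform continuity lets me approximate $K_c$ on $S$ in $L^\infty$ by simple functions $K_\eta = \sum_{i,j} c_{ij}\, 1_{Q_i}(x)\,1_{R_j}(y)$ with $\|K_c - K_\eta\|_\infty \leq \eta$; the corresponding operator $T_\eta$ is finite-rank, and by H\"older
\[
\|(T_c - T_\eta)f\|_{L^q_\lambda} \lesssim \eta\,\|1_{\{|y|\le R\}}\|_{L^{p'}_{\mu^{-1}}}\,\|1_{\{|x|\le R\}}\|_{L^q_\lambda}\,\|f\|_{L^p_\mu},
\]
where both local weight masses are finite because $\mu\in A_{p,p}$ and $\lambda\in A_{q,q}$. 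Letting $\eta\to 0$ shows $T_c$ is compact as a norm-limit of finite-rank operators. Next, I would truncate $b$ by $b_M := \operatorname{sign}(b)\min(|b|,M)\in L^\infty$; then $M_{b_M}$ is bounded on both $L^p_\mu$ and $L^q_\lambda$, so $[b_M,T_c] = M_{b_M}T_c - T_c M_{b_M}$ is compact as a composition of a compact and a bounded operator on each side.

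To close the argument I would show $\|[b - b_M, T_c]\|_{L^p_\mu\to L^q_\lambda}\to 0$ as $M\to\infty$. Splitting $[b-b_M,T_c]f = (b-b_M)\,T_cf - T_c((b-b_M)f)$ and using $\|K_c\|_\infty\lesssim r^{-d}$ together with the compact support in $\{|y|\le R\}$, each piece is bounded via H\"older by $C_{r,R,\mu,\lambda}\,\|(b - b_M)\,1_{\{|x|\le R\}}\|_{L^s_\sigma}\,\|f\|_{L^p_\mu}$ for suitable $(s,\sigma)$ (namely $L^q_\lambda$ for the first term and $L^{p'}_{\mu^{-1}}$ for the second). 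The crucial input is that $b\in\BMO_\nu^\alpha$ lies in $L^s_\sigma$ locally for every finite $s$ --- this is the weighted fractional John--Nirenberg inequality of Appendix \ref{sect:appB} --- so dominated convergence forces $\|(b - b_M)\,1_{\{|x|\le R\}}\|_{L^s_\sigma}\to 0$. Since compact operators form a closed subspace of $\mathcal{L}(L^p_\mu, L^q_\lambda)$, $[b, T_c] = \lim_M [b_M, T_c]$ is compact. The main obstacle is precisely the unboundedness of $b$; once weighted John--Nirenberg supplies the local $L^s_\sigma$ integrability, the rest of the argument reduces to the routine compactness of integral operators with bounded, compactly supported kernels.
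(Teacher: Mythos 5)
Your overall strategy is genuinely different from the paper's and is in principle workable, but it has a gap at the key analytic input, as explained below.

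The paper writes $T_c=\varphi^{0,R}\circ T_L\circ\varphi^{0,R}$ with $L(x,y)=K(x,y)\varphi_x^{r,10R}(y)$, splits $[b,T_c]$ into the two sandwich operators $(b-\ave{b}_Q)\varphi^{0,R}\circ T_L\circ\varphi^{0,R}$ and $\varphi^{0,R}\circ T_L\circ(b-\ave{b}_Q)\varphi^{0,R}$, and in each case factors through $L^p_\mu\to L^1(Q)\to C(Q)\to L^q_\lambda$, getting compactness of the middle map $T_L$ from Arzel\`a--Ascoli. You instead first prove compactness of $T_c$ itself by uniformly approximating the compactly supported, (uniformly) continuous kernel $K_c$ by separable simple kernels, then truncate $b$ to $b_M=\operatorname{sgn}(b)\min(\abs{b},M)\in L^\infty$ and let $M\to\infty$; this is a legitimate alternative route, and it has the advantage of not introducing the auxiliary space $C(Q)$. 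The finite-rank step and the observation that $[b_M,T_c]$ is compact are both fine.

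The gap is in how you justify $\Norm{(b-b_M)1_{B_R}}{L^q_\lambda}\to0$ and $\Norm{(b-b_M)1_{B_R}}{L^{p'}_{\mu^{-1}}}\to0$. You appeal to the weighted fractional John--Nirenberg inequality of Appendix~\ref{sect:appB} and assert that $b\in\BMO^\alpha_\nu$ gives local $L^s_\sigma$ integrability \emph{for every finite $s$}. This is not what Theorem~\ref{appBthm0} says: that result controls the oscillation $\calO_\nu^{r,\alpha}$ with respect to the single weight $\nu$, and only up to the finite exponent range $r\le p'$ determined by the $A_p$ class of $\nu$ (with a slight self-improvement by reverse H\"older); it does not directly yield $b-\ave{b}_Q\in L^{q}_\lambda(Q)$ or $b-\ave{b}_Q\in L^{p'}_{\mu^{-1}}(Q)$, which involve the two separate weights $\lambda$ and $\mu^{-1}$. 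In fact the paper never invokes Appendix~\ref{sect:appB} for this purpose: it proves the needed local integrability directly, via the sparse augmentation of Lemma~\ref{lem:aug} applied to the single cube $Q=[-R,R]^d$, followed by the almost-orthogonality Lemma~\ref{lem:Pyth} in $L^{p'}(\mu^{-p'})$ (resp.\ $L^q(\lambda^q)$), using $b\in\BMO^{p,q}_{\mu,\lambda}$ and the sparse Carleson packing together with $\Norm{\cdot}{\ell^{p'/q'}}\le\Norm{\cdot}{\ell^1}$. Your argument will close if you replace the loose reference to Appendix~\ref{sect:appB} by exactly this computation, which is indeed a John--Nirenberg--type self-improvement but run with the correct pair of weights rather than with $\nu$ alone.
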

\begin{proof}
%	Let us start with some remarks. The symbol $L_c^\infty$ will denote the space of bounded, compactly supported functions.  
%	We will study the operator $T_c$ defined by
%	\[
%	T_cf(x):=\varphi^{0,R}(x) \int K(x,y)\varphi^{0,R}(y)\varphi_x^{r,S}(y)f(y)\ud y.
%	\]
%	Then $T_cf(x)$ is well defined e.g. for all $f\in L^1(\R^d)$ and $x\in \R^d$. Namely, for $f\in L^1(\R^d)$ and $x\in \R^d$,
%	\begin{align*}
%		\int |K(x,y)\varphi^{0,R}(y)\varphi_x^{r,S}(y)f(y)|\ud y&\leq \int_{|y|\leq R,|y-x|>r/2} |K(x,y)||f(y)|\ud y \\
%		&\lesssim_d c_Kr^{-d}\int_{|y|\leq R}|f(y)|\ud y \\
%		&\leq c_Kr^{-d}\|f\|_{L_1}.
%	\end{align*}
%	
	Let us denote $Q=[-R,R]^d,$ $S=10R$ and 
	\[
	T_Lf(x)=\int L(x,y)f(y)\ud y,\qquad L(x,y) = K(x,y)\varphi_x^{r,S}(y).
	\]
%	This is well defined, since for all $x\in \R^d$,
%	\begin{equation}\label{eq:tlisdefinedforl1}
%		|T_Lf(x)|\leq\int_{y\in Q, |y-x|>r/2} |L(x,y)||f(y)|\ud y\leq \|f\|_{L^1}\sup_{|y-x|>r/2} |K(x,y)|\lesssim_d\underbrace{c_Kr^{-d}\|f\|_{L^1}}_{<\infty}.
%	\end{equation}
%	
so that 
	\begin{align*}
 	T_c = 	T^{0,R}\varphi_x^{r,S} = \varphi^{0,R}\circ T_L\circ \varphi^{0,R}.
	\end{align*}
We view the commutator as
	\begin{align*}
		[b,T_c] &=  \varphi^{0,R}\circ[b, T_L] \circ \varphi^{0,R} \\ 
	&=  \varphi^{0,R}\circ[b-\ave{b}_Q, T_L] \circ \varphi^{0,R} \\
		&= \big((b-\ave{b}_Q)\varphi^{0,R}\circ T_L \circ \varphi^{0,R} \big)- \big(\varphi^{0,R}\circ  T_L \circ (b-\ave{b}_Q\big)\varphi^{0,R})
	\end{align*}
and show that both terms separately are compact. More precisely, in the diagrams
	\begin{equation}\label{eq:diag1}
		L^p_\mu\overset{\varphi^{0,R}(b-\ave{b}_{Q})}{\longrightarrow} L^1(Q)\overset{T_L}{\longrightarrow}C\big(Q;\Norm{\cdot}{\infty}\big)\overset{\varphi^{0,R} }{\longrightarrow} L^q_\lambda
	\end{equation}
and	
	\begin{equation}\label{eq:diag2}
		L^p_\mu\overset{\varphi^{0,R}}{\longrightarrow} L^1(Q)\overset{T_L}{\longrightarrow} C\big(Q;\Norm{\cdot}{\infty}\big)\overset{(b-\ave{b}_{Q})\varphi^{0,R} }{\longrightarrow} L^q_\lambda
	\end{equation}
we show that the first and the last maps are bounded and that the one in the middle is compact.  We only provide details for the diagram \eqref{eq:diag1} with \eqref{eq:diag2} being completely analogous. We first show the compactness of $T_L.$ Let there be a family $\{f_j\}_j$ such that $\sup_j\Norm{f_j}{L^1(Q)}\lesssim 1;$ then we need to show that the collection $\{T_Lf_j\}_j\subset C\big(Q;\Norm{\cdot}{\infty}\big)$ has a converging subsequence.
By the Arzela-Ascoli theorem, it is enough to check equicontinuity and equiboundedness of $\{T_Lf_j\}_j\subset C\big(Q;\Norm{\cdot}{\infty}\big).$ Equiboundedness is immediate by the simple estimate 
\begin{align*}
		\babs{T_Lf(x)} \leq \int_{Q} \abs{K(x,y)\varphi_x^{r,S}(y)}\abs{f(y)}\ud y\lesssim r^{-d}\Norm{f}{L^1(Q)}.
\end{align*}

For equicontinuity, we first check that $L(x,y) = \varphi_x^{r,S}(y)K(x,y)$ is a CZ-kernel, which can be seen as follows. Recall that $L(x,y) = \big(\varphi_x^{0,S}-\varphi_x^{0,r}\big)K(x,y)$ and combine this with the fact that $\varphi_x^{0,t}(y)K(x,y)$ is a CZ-kernel, for $t>0,$ for this see the estimate  \eqref{ghj} below, and that CZ-kernels are closed under summation.
Let  $x,x'\in Q$ be such that $\abs{x-x'} <  sr,$ for $0<s<\frac{1}{4}.$ Then, we have 
\begin{align*}
	\abs{T_Lf(x)-T_Lf(x')} \leq \sup_{y\in Q}\abs{L(x,y)-L(x',y)}\Norm{f}{L^1(Q)}
\end{align*}
and using the regularity estimate of $L$ for the factor in front,
\begin{align*}
	\abs{L(x,y)-L(x',y)} &= \abs{L(x,y)-L(x',y)}1_{B(x,r/2)^c}(y) \\ 
	&\lesssim \omega\big( \frac{\abs{x-x'}}{\abs{x-y}}\big)\abs{x-y}^{-d}1_{B(x,r/2)^c}(y) \lesssim \omega(s)r^{-d}.
\end{align*}
This shows equicontinuity.

	The rightmost map of diagram \eqref{eq:diag1} is bounded:
	\[
	\|\varphi^{0,R}f\|_{L_\lambda^q}\leq \big(\int_{Q} |f|^q\lambda^q\big)^{1/q}\leq \|f\|_{L^\infty} \big(\int_{Q} \lambda^q\big)^{1/q} \lesssim  \|f\|_{L^\infty}.
	\]
	For the first map, we have 
	\begin{align*}
		\Norm{\varphi^{0,R}(b-\ave{b}_{Q})f}{L^1(Q)} \leq \Norm{b-\ave{b}_Q}{L^{p'}_{\mu^{-1}}(Q)} \Norm{f}{L^{p}_{\mu}}
	\end{align*}
	and it remains to show the finiteness of the term in front. Using Lemma \ref{lem:aug} with a single cube $Q,$ then Lemma \ref{lem:Pyth} and the fact that $b\in\BMO_{\nu}^{\alpha} = \BMO_{\mu,\lambda}^{p,q},$ we find
	\begin{align*}
		\Norm{b-\ave{b}_Q}{L^{p'}_{\mu^{-1}}(Q)}  &\lesssim \Norm{\sum_{P\in\scrS(Q)}\ave{\abs{b-\ave{b}_P}}1_P}{L^{p'}_{\mu^{-1}}(Q)} \\  
		&\leq \Norm{b}{\BMO_{\mu,\lambda}^{p,q}}\Big(\sum_{P\in\scrS(Q)}\BNorm{\frac{\mu^p(P)^{\frac{1}{p}}\lambda^{-q'}(P)^{\frac{1}{q'}}}{\abs{P}}1_P}{L^{p'}_{\mu^{-1}}}^{p'} \Big)^{\frac{1}{p'}}.
	\end{align*}
	It remains to show that the inside sum is finite. For this, we have 
	\begin{align*}
		\sum_{P\in\scrS(Q)}\BNorm{\frac{\mu^p(P)^{\frac{1}{p}}\lambda^{-q'}(P)^{\frac{1}{q'}}}{\abs{P}}1_P}{L^{p'}_{\mu^{-1}}}^{p'} &\leq [\mu]_{A_{p,p}}^{p'}	\sum_{P\in\scrS(Q)}	\lambda^{-q'}(P)^{\frac{p'}{q'}} \\ 
		&\lesssim \Big(\sum_{P\in\scrS(Q)}	\lambda^{-q'}(P)\Big)^{\frac{p'}{q'}} \lesssim \lambda^{-q'}(Q)^{\frac{p'}{q'}} < \infty,
	\end{align*}
	which concludes the proof.
\end{proof}

The first step to checking the right-most claim on the line \eqref{eq:goal} is to prove the following Proposition \ref{prop:err:sprs}, which is interesting by itself.

\begin{prop}\label{prop:err:sprs}
	Let $1<p\leq q<\infty,$ $\mu\in A_{p,p},$ $\lambda\in A_{q,q}$ and $b\in\VMO_{\nu}^{\alpha}.$ Let  $\scrS$ be sparse and denote
	\begin{align}\label{eq:split:sparse}
			\scrS_{k} = \scrS\setminus \scrS_{k}^0 ,\qquad  \scrS_k^0 = \Big\{ Q\in\scrS: \ell(Q)\in [k^{-1},k] ,\ \dist(Q,0)\leq k \Big\},\qquad k>0.
	\end{align}
	Then, given any $\varepsilon>0,$ there exists a large $k=k_{\varepsilon}>0$ such that  
	$$
 		\bNorm{ \mathcal{A}_{b,\mathscr{S}_k}f}{L^q_{\lambda}} +	\bNorm{ \mathcal{A}^*_{b,\mathscr{S}_k}f}{L^q_{\lambda}}
% 		= \BNorm{ \sum_{Q\in\script{S}_k} \bave{\babs{b-\ave{b}_Q}f}_Q1_Q}{L^q_{\lambda}}\leq 
 		\leq\varepsilon\Norm{f}{L^p_{\mu}}.
	$$
%	\begin{equation}
%			\Norm{\mathcal{A}_{b,\mathscr{S}_k}^*}{L^p_{\mu}\to L^q_{\lambda}}\leq \varepsilon.
%	\end{equation}
%	\begin{equation*}
%		\BNorm{\sum_{Q\in\mathscr S_k}1_Q\fint_Q\abs{b-\ave{b}_Q}\abs{f}}{L^p(\lambda)}\lesssim\eps\Norm{f}{L^p(\mu)}
%	\end{equation*}
%	for any $f\in L^p(\mu)$.
\end{prop}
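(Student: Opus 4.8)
The plan is to reduce the norm estimate for the commutator-sparse operators $\mathcal{A}_{b,\mathscr{S}_k}$ and $\mathcal{A}^*_{b,\mathscr{S}_k}$ to the boundedness of the two-weight fractional sparse operator $\mathcal{A}^{p,q}_{\mu,\lambda}$, exactly as in the proof of Theorem \ref{thm:ub}, but now tracking that the relevant $\BMO^{p,q}_{\mu,\lambda}$-type constant is \emph{small} once we restrict to the sub-collection $\mathscr{S}_k$. Concretely, by duality and the relation $\mathcal{A}_{b,\mathscr{S}}=(\mathcal{A}^*_{b,\mathscr{S}})^*$ it suffices to handle $\mathcal{A}^*_{b,\mathscr{S}_k}$; then, repeating the chain \eqref{eq:sub:sharp1} with the augmented collection $\widehat{\mathscr{S}}_k$ from Lemma \ref{lem:aug} (applied only to cubes of $\mathscr{S}_k$), one gets
\[
\bave{\abs{b-\ave{b}_Q}\abs{f}}_Q \lesssim \Big(\sup_{P\in\mathscr{S}_k} \calO^\alpha_\nu(b;P)\Big)\,\Bave{\sum_{\substack{P\in\widehat{\mathscr{S}}_k\\ P\subset Q}}\frac{\mu^p(P)^{1/p}\lambda^{-q'}(P)^{1/q'}}{\abs{P}}\ave{\abs{f}}_P 1_P}_Q,
\]
and then the $L^p_\mu\to L^q_\lambda$ bound for $\mathcal{A}^{p,q}_{\mu,\lambda}$ proved inside Theorem \ref{thm:ub} closes the estimate with a constant $\lesssim \sup_{P\in\mathscr{S}_k}\calO^\alpha_\nu(b;P)$.

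So the whole matter comes down to the claim that, given $\varepsilon>0$, one can choose $k=k_\varepsilon$ so large that
\[
\sup_{Q\in\mathscr{S}_k}\calO^\alpha_\nu(b;Q)\leq \varepsilon.
\]
This is where the hypothesis $b\in\VMO^\alpha_\nu$ enters, and it is the only genuinely new point. By the definition of $\mathscr{S}_k$ in \eqref{eq:split:sparse}, a cube $Q\in\mathscr{S}_k$ satisfies either $\ell(Q)<k^{-1}$, or $\ell(Q)>k$, or $\dist(Q,0)>k$. The first two cases are controlled directly by \eqref{eq:VMOv1} and \eqref{eq:VMOv2}: choose $k$ large enough that $\sup_{\ell(Q)\leq k^{-1}}\calO^\alpha_\nu(b;Q)\leq\varepsilon$ and $\sup_{\ell(Q)\geq k}\calO^\alpha_\nu(b;Q)\leq\varepsilon$. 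For the third case, the cubes $Q$ with $\dist(Q,0)>k$ but $\ell(Q)\in[k^{-1},k]$ are handled by the uniform far-away vanishing: since $\nu\in A_\infty$ (Proposition \ref{prop:bloom1}) it is doubling, so Remark \ref{rem:vmo3eq} applies and \eqref{eq:VMOv3} upgrades to \eqref{eq:VMOv33}, i.e. $\sup_{\dist(Q,0)>k}\calO^\alpha_\nu(b;Q)\to 0$ as $k\to\infty$. Taking $k$ large enough to beat $\varepsilon$ in all three regimes simultaneously gives the claim.

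I expect the main obstacle to be purely bookkeeping: one must be careful that the augmentation step (Lemma \ref{lem:aug}) does not enlarge cubes in a way that pushes them out of the ``small oscillation'' regime — but this is fine because Lemma \ref{lem:aug} only adds \emph{subcubes} $P\subset Q$, and crucially the quantity being estimated, $\calO^\alpha_\nu(b;P)$, is controlled by $\sup_{P\subset Q}\calO^\alpha_\nu(b;P)$; since $P\subset Q$ and $Q\in\mathscr{S}_k$ we have $\ell(P)\leq\ell(Q)$ and $\dist(P,0)\geq\dist(Q,0)-\ell(Q)$, so $P$ is again small-oscillation up to a harmless adjustment of $k$ (absorbed by choosing $k$ a bit larger, using the doubling/adjacent-grid reasoning of Remark \ref{rem:vmo3eq}). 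A second small point is the dependence of the implicit constants on the weight characteristics $[\mu]_{A_{p,p}},[\lambda]_{A_{q,q}}$: these are fixed throughout, so they are absorbed into $\lesssim$, and only the $\BMO^\alpha_\nu$-type factor is made small; thus the final constant is $\lesssim \varepsilon$, and rescaling $\varepsilon$ yields the stated inequality.
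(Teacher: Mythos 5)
Your reduction is fine through the step where you invoke Lemma \ref{lem:aug} and the sparse chain \eqref{eq:sub:sharp1}: by duality it suffices to bound $\mathcal{A}^*_{b,\mathscr{S}_k}$, and after augmentation the estimate would indeed close if one knew that $\calO^\alpha_\nu(b;P)\leq\varepsilon$ for \emph{all} cubes $P\in\widehat{\mathscr{S}}_k$, i.e.\ for all subcubes $P\subset Q$ with $Q\in\mathscr{S}_k$. But that is precisely where the argument breaks down.

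The gap is in your claim that ``$P\subset Q$ and $Q\in\mathscr{S}_k$ implies $P$ is again small-oscillation up to a harmless adjustment of $k$.'' This is false for the big-cube regime. If $Q\in\mathscr{S}_k$ has $\ell(Q)>k$, then the bounds $\ell(P)\leq\ell(Q)$ and $\dist(P,0)\geq\dist(Q,0)-\ell(Q)$ give you nothing: $\ell(Q)$ is large, so $P$ can have any side length in $(0,\ell(Q)]$, and $\dist(Q,0)-\ell(Q)$ can be $\leq 0$, so $P$ can sit right at the origin. Such $P$ are in the ``bad'' region where $\calO^\alpha_\nu(b;P)$ is only bounded by $\|b\|_{\BMO^\alpha_\nu}$, not by $\varepsilon$. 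So for the collection $\scrS_k^b=\{Q\in\scrS:\ell(Q)>k\}$ the smallness you want to feed into \eqref{eq:sub:sharp1} is simply not available, and enlarging $k$ does not fix it (the offending subcubes live at scales independent of $k$). The cases $\ell(Q)<k^{-1}$ and $\dist(Q,0)>k$ are fine, because then every subcube $P\subset Q$ is also small or also far, and there your reasoning (and Remark \ref{rem:vmo3eq}) works verbatim.

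The paper spends the bulk of its proof on exactly the big-cube case. It fixes, once and for all, boundedly many cubes $R_1,\dots,R_{2^d}$ outside of which (and above whose scale) the oscillation is $\leq\varepsilon$, and then splits the augmented subcubes $P$ into three types: $P\supset R_j$ (then $\ell(P)\geq\ell(R_1)$, so small oscillation), $P\cap\bigcup_j R_j=\emptyset$ (then $P$ is far, so small oscillation), and the genuinely bad $P\subset R_j$. For this last type it does \emph{not} try to make $\calO^\alpha_\nu(b;P)$ small; instead it uses the full $\|b\|_{\BMO^\alpha_\nu}$ on those subcubes and extracts smallness from the outer sum over $Q\supset R_i$ with $Q\in\scrS_k^b$, via the pointwise bound $\sum_{Q\supset R_i,\,Q\in\scrS_k^b}\frac{|R_i|}{|Q|}1_Q\leq\min(\delta,M(1_{R_i}))$ for $k$ large (geometric series), followed by dominated convergence in $L^q_\lambda$ using $\lambda\in A_{q,q}$. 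That mechanism is the missing idea in your proposal, and without it the big-cube contribution is not controlled.
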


\begin{proof} 
We only show the claim for $\mathcal{A}^*_{b,\mathscr{S}_k}.$
	We cover $\scrS_k$ with the union of 
\begin{align*}
	\scrS_{k}^d   &=  \big\{ Q\in	\scrS : \dist(Q,0)> k \big\},\\
	\scrS_{k}^s   &=  \big\{ Q\in	\scrS : \ell(Q)<k \big\}, \\ 
	\scrS_{k}^b   &=  \big\{ Q\in	\scrS : \ell(Q)>k \big\}, 
\end{align*}
and estimate 
\begin{align*}
		\bNorm{ \mathcal{A}^*_{b,\mathscr{S}_k}f}{L^q_{\lambda}} \leq \sum_{w=s,b,d}\bNorm{ \mathcal{A}^*_{b,	\scrS_{k}^w}f}{L^q_{\lambda}}.
\end{align*}

We first consider $w\in\{s,d\}$.
By conditions \eqref{eq:VMOv1} and \eqref{eq:VMOv3} (also, see Remark \ref{rem:vmo3eq}), let $k$ be so large that if $\ell(Q) < k$ or  $\dist(Q,0)>k,$ then $\calO_{\nu}^{\alpha}(b;Q)\leq \varepsilon.$
Especially, if $P\subset Q\in \scrS_{k}^w,$ then either $\ell(P)<k$ or $\dist(P,0)> k,$ depending on $w= s,d.$ 
In any case, following the proof of Theorem \ref{thm:ub}, the factor $\Norm{b}{\BMO_{\nu}^{\alpha}}$ in the estimate \eqref{eq:sub:sharp1} can be replaced with $\varepsilon,$ giving  
	\begin{equation*}\label{eq:midpointclear}
	\begin{split}
		&\bave{\abs{b-\ave{b}_Q}\abs{f}}_Q \\ 
		&\lesssim\varepsilon\Bave{\sum_{\substack{ P\in \widehat{\scrS_{k}^w}\\ P\subset Q}}\frac{\mu^p(P)^{\frac{1}{p}}\lambda^{-q'}(P)^{\frac{1}{q'}}}{\abs{P}}\ave{\abs{f}}_P1_P}_Q \leq	\varepsilon\Bave{\sum_{\substack{ P\in \widehat{\scrS_{k}^w}}}\frac{\mu^p(P)^{\frac{1}{p}}\lambda^{-q'}(P)^{\frac{1}{q'}}}{\abs{P}}\ave{\abs{f}}_P1_P}_Q,
	\end{split}
\end{equation*}	
while the remaining estimate is identical, in total yielding:
\[
 \sum_{w=s,d}\bNorm{ \mathcal{A}^*_{b,	\scrS_{k}^w}f}{L^q_{\lambda}} \lesssim	\varepsilon\Norm{f}{L^p_{\mu}}.
\]

Then, we turn to the case $\scrS_k^b$ of big cubes. Without loss of generality we may assume that $\scrS\subset\scrD.$
By conditions \eqref{eq:VMOv2} and \eqref{eq:VMOv3}, we find cubes $R_i\in\scrD$, where $i=1,\dots,2^d$, such that if 
$\ell(P)\geq \ell(R_1),$ or  $P\subset \R^d\setminus \cup_{i=1}^{2^d} R_i,$ then  $\calO_{\nu}^{\alpha}(b;P)\leq \varepsilon.$
We consider these cubes henceforth fixed throughout the rest of the argument.

We cover
\begin{align}
	\scrS_k^b \subset \cup_{i=1}^{2^d}\scrS_{k,i}^b \cup \big( 	\scrS_k^b \setminus \cup_{i=1}^{2^d}\scrS_{k,i}^b  \big),\qquad \scrS_{k,i}^b = \big\{ Q\in 	\scrS_k^b : Q\cap R_i\not=\emptyset\big\}
\end{align}
and estimate
\begin{align*}
	\bNorm{ \mathcal{A}^*_{b,\mathscr{S}_{k,i}^b}f}{L^q_{\lambda}} \lesssim \sum_{i=1}^{2^d}\bNorm{ \mathcal{A}^*_{b,\scrS_{k,i}^b}f}{L^q_{\lambda}} + \bNorm{ \mathcal{A}^*_{b}\big(f;	\scrS_k^b \setminus \cup_{i=1}^{2^d}\scrS_{k,i}^b \big)}{L^q_{\lambda}}.
\end{align*}
The right-most term is handled as the case of $\scrS_k^d.$ 
For each fixed $i,$ we estimate 
\begin{align*}
	 \bNorm{ \mathcal{A}^*_{b,\mathscr{S}_{k,i}^b}f}{L^q_{\lambda}} \lesssim \BNorm{	\sum_{Q\in\scrS_{k,i}^b}\Bave{\sum_{\substack{P\in\widehat{\mathscr{S}_{k,i}^b}\\ P\subset Q}}\bave{\abs{b-\ave{b}_P}}_P\ave{\abs{f}}_P1_P}_Q1_Q}{L^q_{\lambda}}
\end{align*}
and further split and estimate the interior sum as 
\begin{align}
	\sum_{\substack{P\in\widehat{\mathscr{S}_{k,i}^b}\\ P\subset Q}} 
	\leq \sum_{j=1}^{2^d}\Big(\sum_{\substack{P\in\widehat{\mathscr{S}_{k,i}^b}\\ P \subset R_j }} 
	+ \sum_{\substack{P\in\widehat{\mathscr{S}_{k,i}^b}\\ P \supset R_j}}\Big) 
	+  \sum_{\substack{P\in\widehat{\mathscr{S}_{k,i}^b}\\ P\subset  \R^d\setminus \cup_{j=1}^{2^d} R_j }} = \sum_{j=1}^{2^d}\big(I_{i,j} + II_{i,j}\big) + III_i.
\end{align}
The term corresponding to the sum $III_i$ is estimated as in the case $\scrS_k^d$ of the distant cubes; while for $II_{i,j},$ we have $R_j\subset P,$ hence $\ell(P)\geq\ell(R_1),$ hence $\calO_{\nu}^{\alpha}(b;P)\leq \varepsilon$ and the rest of the estimate is as before.
It remains to handle the term corresponding to $I_{i,j},$ where we rewrite the condition $Q\cap R_i\neq\emptyset$ as $Q\supset R_i$ under the assumption that $Q\in\scrD_k^b$ is bigger than $R_i$:
\begin{align}\label{xx}
\BNorm{	\sum_{\substack{Q\in\scrS_{k}^b \\ Q\supset R_i}}\Bave{\sum_{\substack{P\in\widehat{\mathscr{S}_{k,i}^b}\\ P\subset R_j}}\bave{\abs{b-\ave{b}_P}}_P\ave{\abs{f}}_P1_P}_Q1_Q}{L^q_{\lambda}}.
\end{align}
We estimate the interior of \eqref{xx} as 
\begin{align}\label{xxx}	
\Bave{\sum_{\substack{P\in\widehat{\mathscr{S}_{k,i}^b}\\ P\subset R_j}}\bave{\abs{b-\ave{b}_P}}_P\ave{\abs{f}}_P1_P}_Q\leq \Norm{b}{\BMO_{\nu}^{\alpha}}\Bave{\sum_{\substack{P\in\widehat{\mathscr{S}_{k,i}^b}\\ P\subset R_j}}\frac{\mu^p(P)^{\frac{1}{p}}\lambda^{-q'}(P)^{\frac{1}{q'}}}{\abs{P}}\ave{\abs{f}}_P1_P}_Q,
\end{align}
and further
\begin{align*} 
	&\Bave{\sum_{\substack{P\in\widehat{\mathscr{S}_{k,i}^b}\\ P\subset R_j}}\frac{\mu^p(P)^{\frac{1}{p}}\lambda^{-q'}(P)^{\frac{1}{q'}}}{\abs{P}}\ave{\abs{f}}_P1_P}_Q \\ 
	&\qquad=\frac{\abs{R_j}}{\abs{Q}}\frac{1}{\Norm{1_{R_j}}{L^q_{\lambda}}}\BNorm{\mathcal{A}_{\{R_j\}}\big(\mathcal{A}_{\mu,\lambda}^{p,q}(f;\widehat{\scrS}_{k,i}^b)\big)}{L^{q}_{\lambda}} \lesssim \frac{\abs{R_j}}{\abs{Q}}\frac{\Norm{f}{L^{p}_{\mu}}}{\Norm{1_{R_j}}{L^q_{\lambda}}}\lesssim \frac{\abs{R_i}}{\abs{Q}}\Norm{f}{L^{p}_{\mu}},
\end{align*}
where in the last estimate we used the estimates $\ell(R_i)\sim\ell(R_j),$  and $\Norm{1_{R_j}}{L^q_{\lambda}}\sim 1,$ recalling that the boundedly many cubes $R_i$ are considered fixed, and hence dependence on them will be suppressed.
Substituting, we find
\begin{align}\label{xxxx}
	\eqref{xx} \lesssim \Norm{b}{\BMO_{\nu}^{\alpha}} \Norm{f}{L^{p}_{\mu}}\BNorm{\sum_{\substack{Q\in\scrS_{k}^b \\ Q\supset R_i}} \frac{\abs{R_i}}{\abs{Q}} 1_Q}{L^q_{\lambda}}. 
\end{align}
It remains to show that the last term can be made small.
Let $\delta>0.$ Provided that $k = k_{\delta}>>\ell(R_i)$ is taken sufficiently large, by summing a geometric series we find
\[
	\sum_{\substack{Q\in\scrS_{k}^b \\ Q\supset R_i}} \frac{1_Q}{\abs{Q}} \abs{R_i}\leq \delta.
\]
On the other hand
\[
	\sum_{\substack{Q\in\scrS_{k}^b \\ Q\supset R_i}} \frac{1_Q}{\abs{Q}}\abs{R_i} \leq \sum_{\substack{Q\in\scrD \\ Q \supset R_i}} \frac{\abs{R_i}}{\abs{Q}} 1_Q \lesssim M(1_{R_i}). 
\]
Hence, we find 
\begin{align}\label{xxxxx}
	\BNorm{\sum_{\substack{Q\in\scrS_{k}^b \\ Q\supset R_i}} \frac{1_Q}{\abs{Q}} \abs{R_j}}{L^q_{\lambda}} \lesssim  \bNorm{\min(\delta,M(1_{R_i}))}{L^q_{\lambda}}.
\end{align}
Since $M$ is a bounded operator on $L^q_{\lambda}$ (by $\lambda\in A_{q,q}$), by dominated convergence the right-hand side of \eqref{xxxxx} can be made smaller than $\varepsilon,$ by choosing $\delta$ small ($k_{\delta}$ large) enough.
\end{proof}

\begin{cor}	Let $1<p\leq q<\infty,$ $\mu\in A_{p,p},$ $\lambda\in A_{q,q}$ and $b\in\VMO_{\nu}^{\alpha}.$ Let $\scrS$ be sparse. Then, 
\begin{align*}
	[b,A_{\scrS}], A_{b,\scrS}\in\calK(L^p_{\mu},L^q_{\lambda}).
\end{align*}
\end{cor}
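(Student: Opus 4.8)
The plan is to use, for each large parameter $k$, the decomposition $\scrS=\scrS_k^0\cup\scrS_k$ from \eqref{eq:split:sparse}, treating the two pieces by entirely different means: the finite collection $\scrS_k^0$ will produce a finite-rank (hence compact) operator, while by Proposition \ref{prop:err:sprs} the operator attached to the tail $\scrS_k$ has operator norm at most $C\varepsilon$. Since $\calK(L^p_\mu,L^q_\lambda)$ is a closed subspace of $\bddlin(L^p_\mu,L^q_\lambda)$, letting $\varepsilon\to 0$ then yields the conclusion for $\mathcal{A}_{b,\scrS}$ and for $[b,\mathcal{A}_{\scrS}]$ simultaneously (and for $\mathcal{A}^*_{b,\scrS}$ by the same scheme).

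First I would record that $\scrS_k^0=\{Q\in\scrS:\ell(Q)\in[k^{-1},k],\ \dist(Q,0)\le k\}$ is finite: its cubes have side length $\ge k^{-1}$ and lie in a fixed bounded region, so their pairwise disjoint major subsets, of measure $>\gamma k^{-d}$, can be only finitely many. Then, fixing a cube $Q$, I would check that $f\mapsto\ave{f}_Q$ and $f\mapsto\ave{(b-\ave{b}_Q)f}_Q$ are bounded linear functionals on $L^p_\mu$ --- the first because $\mu^{-1}\in L^{p'}(Q)$, which is part of $\mu\in A_{p,p}$, and the second because $\Norm{b-\ave{b}_Q}{L^{p'}_{\mu^{-1}}(Q)}<\infty$, a finiteness already established in the proof of the sufficiency direction via Lemmas \ref{lem:aug} and \ref{lem:Pyth}. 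On the target side, $1_Q\in L^q_\lambda$ (as $\int_Q\lambda^q<\infty$ by $\lambda\in A_{q,q}$) and $(b-\ave{b}_Q)1_Q\in L^q_\lambda$; the latter follows by running the very same $\BMO^{p,q}_{\mu,\lambda}$-to-sparse estimate as in the sufficiency proof --- recall $b\in\BMO_\nu^\alpha=\BMO^{p,q}_{\mu,\lambda}$ by Proposition \ref{prop:bmoeq} --- now measured in $L^q_\lambda$ rather than in $L^{p'}_{\mu^{-1}}$, using $\Norm{\cdot}{\ell^q}\le\Norm{\cdot}{\ell^p}$ and $\lambda\in A_{q,q}$ in place of $\mu\in A_{p,p}$.

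With these facts in hand, each of $f\mapsto\ave{f}_Q(b-\ave{b}_Q)1_Q$, $f\mapsto\ave{(b-\ave{b}_Q)f}_Q1_Q$ and $f\mapsto\ave{f}_Q|b-\ave{b}_Q|1_Q$ is a bounded rank-one map $L^p_\mu\to L^q_\lambda$. Since $\mathcal{A}_{\scrS}f=\sum_Q\ave{f}_Q1_Q$ gives $[b,\mathcal{A}_{\scrS}]f=\sum_Q(\ave{f}_Q(b-\ave{b}_Q)-\ave{(b-\ave{b}_Q)f}_Q)1_Q$, and a short duality computation gives $\mathcal{A}_{b,\scrS}f=\sum_Q\ave{f}_Q|b-\ave{b}_Q|1_Q$, the restricted operators $[b,\mathcal{A}_{\scrS_k^0}]$ and $\mathcal{A}_{b,\scrS_k^0}$ are finite sums of such rank-one operators, hence compact.

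It remains to bound the tails in operator norm. For $\mathcal{A}_{b,\scrS_k}$ this is precisely Proposition \ref{prop:err:sprs}. For the commutator one uses the pointwise bound $|[b,\mathcal{A}_{\scrS_k}]f|\le\mathcal{A}_{b,\scrS_k}|f|+\mathcal{A}^*_{b,\scrS_k}|f|$, a consequence of $|b\ave{f}_Q-\ave{bf}_Q|\le|b-\ave{b}_Q|\ave{|f|}_Q+\ave{|b-\ave{b}_Q||f|}_Q$, whence $\Norm{[b,\mathcal{A}_{\scrS_k}]}{L^p_\mu\to L^q_\lambda}\lesssim\Norm{\mathcal{A}_{b,\scrS_k}}{L^p_\mu\to L^q_\lambda}+\Norm{\mathcal{A}^*_{b,\scrS_k}}{L^p_\mu\to L^q_\lambda}\le C\varepsilon$, again by Proposition \ref{prop:err:sprs}. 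Thus $\mathcal{A}_{b,\scrS}$ and $[b,\mathcal{A}_{\scrS}]$ each lie within $C\varepsilon$ of a compact operator for every $\varepsilon>0$, and closedness of $\calK(L^p_\mu,L^q_\lambda)$ finishes the proof. I expect the only genuinely delicate points --- and hence the main obstacle --- to be the local integrability inputs $b-\ave{b}_Q\in L^{p'}_{\mu^{-1}}(Q)$ and $(b-\ave{b}_Q)1_Q\in L^q_\lambda$; once these are in place, the rest is soft functional analysis resting on Proposition \ref{prop:err:sprs}.
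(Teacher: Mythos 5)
Your proposal is correct and takes precisely the route the paper intends: the paper's proof is the single sentence that both operators can be approximated to arbitrary precision by finite-rank operators via Proposition \ref{prop:err:sprs}, and your write-up is the honest expansion of that remark (finiteness of $\scrS_k^0$, boundedness of each rank-one piece via the local integrability $b-\ave{b}_Q\in L^{p'}_{\mu^{-1}}(Q)$ and $(b-\ave{b}_Q)1_Q\in L^q_\lambda$, the pointwise bound $|[b,\mathcal{A}_{\scrS_k}]f|\le\mathcal{A}_{b,\scrS_k}|f|+\mathcal{A}^*_{b,\scrS_k}|f|$, and closedness of $\calK$).
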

\begin{proof} By Proposition \ref{prop:err:sprs}, we can approximate both by finite rank operators to arbitrary precision.
\end{proof}

\begin{prop}Suppose that $b\in\VMO_{\nu}^{\alpha}.$ Then,
	\begin{align*}
		\lim_{\varepsilon\to 0}\bNorm{[b,	 T_{\varepsilon}]}{L^{p}_{\mu}\to L^q_{\lambda}} = 0.
	\end{align*}
\end{prop}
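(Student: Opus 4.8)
The plan is to estimate separately the two pieces of $[b,T_\varepsilon]=[b,T^{0,R}\varphi_x^{0,r}]+[b,T^{R,\infty}]$, showing that for any $\varepsilon'>0$ both have $L^p_\mu\to L^q_\lambda$ norm $\lesssim\varepsilon'$ once $\varepsilon=r=R^{-1}$ is taken small enough (depending on $\varepsilon'$ and on $b$); letting $\varepsilon'\downarrow0$ then finishes. In both pieces the mechanism is the same: a sparse domination of a commutator reduces matters to sparse operators $\mathcal{A}_{b,\mathscr{S}}$, $\mathcal{A}^*_{b,\mathscr{S}}$, and the localizing cut-offs force every contributing cube $Q$ into the ``extreme'' subfamily $\mathscr{S}_k$ of \eqref{eq:split:sparse} with $k=k_{\varepsilon'}$, so that Proposition \ref{prop:err:sprs} applies and supplies the gain $\varepsilon'$.

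\emph{The short-range piece.} First one checks that $T^{0,R}\varphi_x^{0,r}$ is a Calder\'on--Zygmund operator with $\Norm{T^{0,R}\varphi_x^{0,r}}{\CZO}\lesssim1$ uniformly in $R=r^{-1}\geq1$: its kernel $\varphi^{0,R}(x)\varphi^{0,R}(y)\varphi_x^{0,r}(y)K(x,y)$ obeys the size bound (all cut-offs are $\leq1$) and a Dini smoothness estimate with modulus $\lesssim\omega+\mathrm{id}$ (differentiating $\varphi_x^{0,r}$ costs only an extra $O(t)$, while the factors $\varphi^{0,R}$ vary at the harmless scale $R\geq r$), and $T^{0,R}\varphi_x^{0,r}=\varphi^{0,R}\circ(T-T^{\mathrm{sm}}_r)\circ\varphi^{0,R}$ is uniformly $L^2$-bounded, $T^{\mathrm{sm}}_r$ being a smooth truncation of $T$. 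The extra feature we exploit is that this kernel is supported in $\{\abs{x-y}<r\}$: partitioning $\R^d$ into cubes $\{Q_0^k\}$ of side $2r$ and using that for $x\in Q_0^k$ one has $[b,T^{0,R}\varphi_x^{0,r}]f(x)=[b,T^{0,R}\varphi_x^{0,r}](f1_{3Q_0^k})(x)$ (the complementary part vanishes, since $\dist(Q_0^k,\R^d\setminus 3Q_0^k)=2r>r$), we may apply Lemma \ref{lem:sdom} inside each $3Q_0^k$ and reassemble, obtaining $\babs{[b,T^{0,R}\varphi_x^{0,r}]f}\lesssim\sum_i\big(\mathcal{A}_{b,\mathscr{S}_i}\abs{f}+\mathcal{A}^*_{b,\mathscr{S}_i}\abs{f}\big)$ for boundedly many sparse collections $\mathscr{S}_i$, all of whose cubes have side $\lesssim r$. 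Choosing $r$ so small that every such cube $Q$ satisfies $\ell(Q)<k_{\varepsilon'}^{-1}$, we have $\mathscr{S}_i=(\mathscr{S}_i)_{k_{\varepsilon'}}$ in the notation of \eqref{eq:split:sparse}, and Proposition \ref{prop:err:sprs} gives $\Norm{[b,T^{0,R}\varphi_x^{0,r}]f}{L^q_\lambda}\lesssim\varepsilon'\Norm{f}{L^p_\mu}$.

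\emph{The long-range piece.} Since $b$, $\varphi^{0,R}$ and $\varphi^{R,\infty}$ are all multiplication operators, two of them commute, hence
\[
  [b,T^{R,\infty}]f=\varphi^{R,\infty}[b,T]\varphi^{R,\infty}f+\varphi^{R,\infty}[b,T]\varphi^{0,R}f+\varphi^{0,R}[b,T]\varphi^{R,\infty}f.
\]
To each term we apply the sparse domination of $[b,T]$ (Lemma \ref{lem:sdom}) to the inner function $g\in\{\varphi^{R,\infty}f,\varphi^{0,R}f\}$. In each term one of the cut-offs is $\varphi^{R,\infty}$, which either forces $\supp g\subset\{\abs{y}>R/2\}$ or forces the output to be supported in $\{\abs{x}>R/2\}$; in either case a cube $Q$ can contribute to $\mathcal{A}_{b,\mathscr{S}_j}\abs{g}$ or $\mathcal{A}^*_{b,\mathscr{S}_j}\abs{g}$ (where the sparse families $\mathscr{S}_j$ from Lemma \ref{lem:sdom} depend on $g$) only if it meets $\{\abs{\,\cdot\,}>R/2\}$. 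Once $R/2>(1+\sqrt d)\,k_{\varepsilon'}$, such a $Q$ cannot simultaneously have $\ell(Q)\in[k_{\varepsilon'}^{-1},k_{\varepsilon'}]$ and $\dist(Q,0)\leq k_{\varepsilon'}$, so $Q\in(\mathscr{S}_j)_{k_{\varepsilon'}}$. Bounding $\abs{g}\leq\abs{f}$ and invoking Proposition \ref{prop:err:sprs} gives $\Norm{[b,T^{R,\infty}]f}{L^q_\lambda}\lesssim\varepsilon'\Norm{f}{L^p_\mu}$.

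Adding the two bounds, $\Norm{[b,T_\varepsilon]}{L^p_\mu\to L^q_\lambda}\lesssim\varepsilon'$ for all sufficiently small $\varepsilon$, which proves the claim. I expect the short-range piece to be the main obstacle: one must verify that $T^{0,R}\varphi_x^{0,r}$ is genuinely a Calder\'on--Zygmund operator with constants independent of $\varepsilon$, and --- more importantly --- that its near-diagonal kernel support can be turned into a sparse domination using only small cubes, which is precisely what lets the small-scale condition \eqref{eq:VMOv1} (through Proposition \ref{prop:err:sprs}) close the argument. The long-range piece is softer, being essentially bookkeeping with Lemma \ref{lem:sdom} and the conditions \eqref{eq:VMOv2}, \eqref{eq:VMOv3} already built into Proposition \ref{prop:err:sprs}.
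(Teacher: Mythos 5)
Your proof is correct and follows essentially the same route as the paper: split $T_\varepsilon$ into short- and long-range pieces, verify that the near-diagonal truncation is a Calder\'on--Zygmund operator with constants uniform in $\varepsilon$, localize its sparse domination to cubes of side $\sim r$, observe that the cut-offs force the long-range sparse cubes away from the origin, and in both cases invoke Proposition \ref{prop:err:sprs}. The only cosmetic deviations from the paper's argument are that you absorb the $\varphi^{0,R}$ factors into the operator (the paper keeps them as multipliers on $f$ and $g$, estimating $\Norm{\widetilde T_r}{\CZO}=\Norm{T\varphi_x^{0,r}}{\CZO}\lesssim1$ instead) and the exact side length used in the partition.
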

\begin{proof} By Proposition \ref{prop:err:sprs} it is enough to show that for each $k>0,$ the following holds: for all $\varepsilon > 0$ small enough (depending on $k$) there exists a sparse collection $\scrS$ such that
	\begin{align}\label{eq:step1}
		\babs{\bave{[b,T_{\varepsilon}]f , g}}\lesssim 	\babs{ \bave{ \mathcal{A}_{b,\mathscr{S}}\abs{f} , \abs{g}}} + \babs{ \bave{  \mathcal{A}^*_{b,\mathscr{S}}\abs{f} , \abs{g}}},\qquad \scrS = \scrS_k,
	\end{align}
where $\scrS_k$ is as in line \eqref{eq:split:sparse}. Recall that $\varepsilon = r = R^{-1}.$ As 
\begin{align}\label{eq:step2}
	T_{\varepsilon} = 	\varphi^{0,R}T	\varphi^{0,R}\varphi_x^{0,r} + \Big[  \varphi^{R,\infty}T \varphi^{R,\infty}
	+ \varphi^{R,\infty}T \varphi^{0,R}
	+ \varphi^{0,R}T \varphi^{R,\infty} \Big],
\end{align}
it is enough to show that claim \eqref{eq:step1} is satisfied for each of the four pieces of $T_{\varepsilon}.$ Each of the three pieces inside the brackets is handled in the same way. For example, directly from the sparse domination of $[b,T]$ we acquire a sparse collection $\scrS$ such that
\begin{align}\label{step1}
	\babs{\bave{ [b, \varphi^{0,R}T \varphi^{R,\infty}]f ,g}} \lesssim  \babs{ \bave{ \mathcal{A}_{b,\mathscr{S}}\abs{f\varphi^{R,\infty}} , \abs{g\varphi^{0,R}}}} + \babs{ \bave{  \mathcal{A}^*_{b,\mathscr{S}}\abs{f\varphi^{R,\infty}} , \abs{g\varphi^{0,R}}}}.
\end{align}
Considering the first term on the right-hand side of \eqref{step1}, we have 
\begin{align*}
 \babs{ \bave{ \mathcal{A}_{b,\mathscr{S}}\abs{f\varphi^{R,\infty}} , \abs{g\varphi^{0,R}}}} \leq \babs{ \bave{ \mathcal{A}_{b}\big( \abs{f}; \mathscr{S}^{R,\infty}\big) , \abs{g}}},
\end{align*}
where $\mathscr{S}^{R,\infty} = \{ Q\in\scrS: Q\cap B(0,\frac{1}{2} R)^c \neq\emptyset \}.$ Clearly $\mathscr{S}^{R,\infty} = \scrS_k^{R,\infty}$ for sufficiently small $\varepsilon$ (large $R$).
The other bracketed pieces are handled identically. 

It remains to handle the term $\bave{ [b, \varphi^{0,R}T	\varphi^{0,R}\varphi_x^{0,r}]f ,g},$ which we write as 
\[
\bave{ [b, \varphi^{0,R}T	\varphi^{0,R}\varphi_x^{0,r}]f ,g} = \bave{ [b, \wt{T}_r]\widetilde{f} ,\widetilde{g}},\qquad  \widetilde{T}_r = T	\varphi_x^{0,r},\quad \wt{f} = \varphi^{0,R} f,\quad \wt{g} = \varphi^{0,R} g.
\]  
We express $\R^d = \bigcup_j Q_j$ as a disjoint union of cubes with $\diam(Q_j) = r.$
Then, we have 
\begin{align*}
\bave{ [b, \wt{T}_r]\widetilde{f} ,\widetilde{g}} =  \sum_j \bave{ 1_{Q_j}[b, \wt{T}_r]\widetilde{f} ,\widetilde{g}}
\end{align*}
and it is enough to show that each $\bave{ 1_{Q_j}[b, \wt{T}_r]\widetilde{f} ,\widetilde{g}}$ admits a sparse domination localized to the cube $Q_j.$ Using 
\[
1_{Q_j}(x)\wt{T}_rf(x) = 1_{Q_j}(x)\wt{T}_r(1_{Q_j^*}f)(x),\qquad Q_j^* = 3Q_j,
\]
we write
\begin{align*}
\bave{ 1_{Q_j}[b, \wt{T}_r]\widetilde{f} ,\widetilde{g}} = \bave{ 1_{Q_j}[b, \wt{T}_r](1_{Q_j^*}\widetilde{f}) ,\widetilde{g}}.
\end{align*}
Next we argue that $\widetilde{T}_r$ is a CZO with $\Norm{\wt{T}_r}{\CZO}\lesssim 1,$ with constant independent of $r>0.$ Since
\[
\wt{T}_r = T- T\varphi_x^{r,\infty},
\] 
for the uniform $L^2$-to-$L^2$-boundedness it is enough to show that the $T\varphi_x^{r,\infty}$ are uniformly $L^2$-to-$L^2$-bounded.  By Cotlar's inequality, the truncated operators $T_rf(x) = T(1_{B(x,r)^c}f)(x)$ are uniformly bounded and hence it is enough to give the following uniform bound for the difference:
\begin{align*}
	\babs{\big(T_r - T\varphi_x^{r,\infty}\big)f(x)} \leq \int_{B(x.r)\setminus B(x,\frac{r}{2})}\abs{K(x,y)}(1-\varphi_x^{r,\infty}(y))\abs{f(y)}\ud y \lesssim Mf(x).
\end{align*}
Next we check that $\varphi_x^{0,r}K(x,y)$ is a CZ-kernel with uniform constants. The size estimate is immediate by $\abs{\varphi_x^{0,r}K(x,y)}\leq \Norm{\varphi^{0,r}}{L^{\infty}}\abs{K(x,y)}.$  For the regularity estimate, provided that $\abs{x-x'}\leq \frac{1}{2}\abs{x-y},$ we have 
\begin{equation}\label{ghj}
	\begin{split}
		&\babs{\varphi_{x'}^{0,r}(y)K(x',y)-\varphi_x^{0,r}(y)K(x,y)} \\ 
	& \leq \babs{\varphi_{x'}^{0,r}(y)\big(K(x',y)-K(x,y)\big)}+ \babs{\big(\varphi_{x'}^{0,r}(y)-\varphi_x^{0,r}(y)\big)K(x,y)} \\ 
	& \lesssim \Norm{\varphi_{x'}^{0,r}}{L^{\infty}}\omega\big( \frac{\abs{x-x'}}{\abs{x-y}} \big)\abs{x-y}^{-d} + \babs{\varphi_{x'}^{0,r}(y)-\varphi_x^{0,r}(y)}\abs{x-y}^{-d}.
	\end{split}
\end{equation}
The remaining term we estimate, using $\abs{x-x'}\leq \frac{1}{2}\abs{x-y}$ in the  first identity, as 
\begin{align*}
	\abs{\varphi_{x'}^{0,r}(y)-\varphi_x^{0,r}(y)}& = 	\abs{\varphi_{x'}^{0,r}(y)-\varphi_x^{0,r}(y)}1_{B(x,2r)}(y) \\ 
	&\lesssim \Norm{\nabla\varphi^{0,r}}{L^{\infty}}\abs{x-x'}1_{B(x,2r)}(y)\sim \frac{1}{r}\abs{x-x'}1_{B(x,2r)}(y)\lesssim \frac{\abs{x-x'}}{\abs{x-y}}.
\end{align*}
 The other estimate is completely symmetric. We have shown that  $\Norm{\wt{T}_r}{\CZO}\lesssim 1,$ independently of $r>0.$ 

Now, by the standard proof of the sparse domination of the commutator, see e.g. \cite{LOR1}, we conclude that
\begin{align}\label{eq:step3}
	\babs{\bave{ 1_{Q_j}\big[b, \widetilde{T}_r\big](1_{Q_j^*}\widetilde{f}) ,\widetilde{g}}} \lesssim  \bave{ \mathcal{A}_{b}\big( \abs{f}; \mathscr{S}(Q_j) \big), \abs{g}} + \bave{  \mathcal{A}^*_{b}\big( \abs{f}; \mathscr{S}(Q_j) \big) , \abs{g}},
\end{align}
where $\scrS(Q_j)$ is a sparse collection inside $Q_j^*,$ and the implicit constant in \eqref{eq:step3} is independent of $r.$ All in all, we have shown that
\begin{align*}
	\babs{	\bave{ [b, \varphi^{0,R}T	\varphi^{0,R}\varphi_x^{0,r}]f ,g}} \lesssim \bave{ \mathcal{A}_{b}\big( \abs{f}; \mathscr{S} \big), \abs{g}} + \bave{  \mathcal{A}^*_{b}\big( \abs{f}; \mathscr{S} \big) , \abs{g}},
\end{align*}
where $\mathscr{S} =  \cup_j\mathscr{S}(Q_j).$ By $\ell(Q_j) = r,$ for all $j,$ it follows that $\mathscr{S} = \mathscr{S}_k$ for a choice of $\varepsilon = r$ sufficiently small. 
\end{proof}

	\subsection{Necessity} 
	Let $T$ be a non-degenerate singular integral. In this section we show that $[b,T]\in\calK(L^p_{\mu}, L^q_{\lambda})$ implies $b\in\VMO_{\nu}^{\alpha}.$
	
The following Lemma is originally implied in Uchiyama \cite{Uch1978} and also in several later works.
% for example in Lacey, Li \cite{LacLi2021}.
\begin{lem}\label{lem:seq}
	Let $1<p,q<\infty$, let $\mu$ and $\lambda$ be arbitrary weights, and let $U\colon L_\mu^p\to L_\lambda^q$ be a bounded linear operator. Then, there does not exist a sequence $\{u_i\}=\{u_i\}_{i=1}^\infty$ with the following properties:
	\begin{enumerate}[$(i)$]
		\item $\sup_{i\in\N}\|u_i\|_{L_\mu^p}\lesssim 1,$
%		\item $\supp(u_i)\cap\supp(u_j) = \emptyset,$ if $i\not=j,$
		\item $\{x\colon u_i(x)\neq 0\}\cap \{x\colon u_j(x)\neq 0\}=\emptyset$, whenever $i\neq j,$ and
%		\item there exists a limit $\|\Phi-U(u_i)\|_{L_\lambda^q}\overset{i\to\infty}{\longrightarrow}0$, where $\|\Phi\|_{L_\lambda^q}>0$.
		\item there exists $\Phi\in L^q_{\lambda}$ such that  $$\lim_{i\to\infty} \|\Phi-U(u_i)\|_{L_\lambda^q} = 0,\qquad\|\Phi\|_{L_\lambda^q}>0.$$
	\end{enumerate} 
\end{lem}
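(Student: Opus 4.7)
The plan is to derive a contradiction by constructing a sequence of normalized averages on which $U$ behaves incompatibly with its boundedness. The key mechanism is the interaction between pairwise disjoint supports (which make $L^p_\mu$-norms of sums essentially $\ell^p$-sums) and convergence of $U(u_i)$ in $L^q_\lambda$ (which is preserved under Cesaro averaging).

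Concretely, I would set $v_n = \frac{1}{n}\sum_{i=1}^n u_i$. By hypothesis (ii) the functions $u_i$ have disjoint supports, hence $\bigl|v_n\bigr|^p = \frac{1}{n^p}\sum_{i=1}^n |u_i|^p$ pointwise, and so
\begin{equation*}
\Norm{v_n}{L^p_\mu}^p = \frac{1}{n^p}\sum_{i=1}^n \Norm{u_i}{L^p_\mu}^p \leq \frac{C^p}{n^{p-1}},
\end{equation*}
where $C = \sup_i \Norm{u_i}{L^p_\mu}$ is finite by (i). Since $p>1$, we conclude that $\Norm{v_n}{L^p_\mu} \to 0$ as $n \to \infty$.

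On the other hand, by linearity $U(v_n) = \frac{1}{n}\sum_{i=1}^n U(u_i)$. Since $U(u_i) \to \Phi$ in $L^q_\lambda$ by (iii), a standard Banach space argument (the Cesaro mean of a convergent sequence converges to the same limit) gives $U(v_n) \to \Phi$ in $L^q_\lambda$, so $\Norm{U(v_n)}{L^q_\lambda} \to \Norm{\Phi}{L^q_\lambda} > 0$. Combining, we obtain
\begin{equation*}
\Norm{U}{L^p_\mu \to L^q_\lambda} \geq \frac{\Norm{U(v_n)}{L^q_\lambda}}{\Norm{v_n}{L^p_\mu}} \to \infty,
\end{equation*}
contradicting boundedness of $U$.

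There is really no substantive obstacle here; the argument is entirely soft. The only point worth double-checking is the justification of the Cesaro convergence, which just uses the triangle inequality: write $\frac{1}{n}\sum_{i=1}^n U(u_i) - \Phi = \frac{1}{n}\sum_{i=1}^n (U(u_i) - \Phi)$ and split the sum at a large index beyond which $\Norm{U(u_i)-\Phi}{L^q_\lambda} < \varepsilon$. The essential input from the hypotheses is the combination $p>1$ (to make $n/n^p \to 0$) together with disjointness of supports (to decouple the $L^p_\mu$-norm of the sum into a sum of $p$-th powers).
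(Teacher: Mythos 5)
Your proof is correct, and it takes a genuinely different (and leaner) route than the paper's. The paper passes to a subsequence so that $\|\Phi - U(u_i)\|_{L^q_\lambda} \leq 2^{-i}$, fixes a coefficient sequence $\{a_i\}\in(\ell^p\cap\ell^{q'})\setminus\ell^1$, shows $g_k=\sum_{i=1}^k a_i u_i$ is Cauchy in $L^p_\mu$ via disjointness, hence $U(g_k)$ converges, and then compares $U(g_k)$ against $h_k=\bigl(\sum_{i=1}^k a_i\bigr)\Phi$ whose $L^q_\lambda$-norm diverges because $\{a_i\}\notin\ell^1$. Your Cesaro-mean argument replaces all of this machinery: disjointness makes $\|v_n\|_{L^p_\mu}^p = n^{-p}\sum_{i\leq n}\|u_i\|_{L^p_\mu}^p\lesssim n^{1-p}\to 0$ (only $p>1$ is used), while Cesaro convergence forces $\|U(v_n)\|_{L^q_\lambda}\to\|\Phi\|_{L^q_\lambda}>0$, so $\|U(v_n)\|/\|v_n\|\to\infty$. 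This avoids passing to a subsequence, avoids introducing the auxiliary coefficient sequence, and never touches the exponent $q$ (beyond the generic Banach-space convergence). The one thing worth remarking is that since the $u_i$ have disjoint supports and $U(u_i)\to\Phi\neq 0$, only finitely many $u_i$ can vanish a.e., so $v_n\neq 0$ for all large $n$ and the ratio is well-defined. Both arguments exploit the same underlying tension (disjoint supports decouple the $L^p_\mu$-norm; convergence in $L^q_\lambda$ prevents the images from decaying), but yours is the cleaner instantiation.
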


\begin{proof} We show that the existence of such a sequence contradicts the boundedness of $U.$
	By the point $(iii),$ and by passing to a subsequence if needed, we can assume that $\|\Phi-U(u_i)\|_{L_\lambda^q}\leq 2^{-i}.$
	By $p,q'>1,$ let us choose a sequence $\{a_i\} \in \big(\ell^p\cap\ell^{q'}\big)\setminus \ell^1$ of positive numbers. We define $g_k=\sum_{i=1}^ka_iu_i$ and next show that $\{g_k\}$ is a Cauchy sequence in $L_\mu^p$. Suppose that $m\leq k.$ By assumptions $(i)$ and $(ii)$ we find that 
	\begin{align*}
		\|g_k-g_m\|_{L_\mu^p} =\big\|\sum_{i=m+1}^ka_iu_i\big\|_{L_\mu^p}
		=\bigg(\sum_{i=m+1}^k|a_i|^p\|u_i\|_{L_\mu^p}^p\bigg)^\frac{1}{p} \lesssim \bigg(\sum_{i=m+1}^k|a_i|^p\bigg)^\frac{1}{p},
	\end{align*}
	from which by $\{a_i\}_i\in\ell^p$ we see that $\{g_k\}$ is Cauchy. By completeness of $L_\mu^p,$ the sequence of functions $g_k$ converges in $L_\mu^p;$ and since $U$ is continuous, $U(g_k)$ converges in $L_\lambda^q.$  	
	Let us define $h_k:=\sum_{i=1}^ka_i\Phi.$ Then, we have 
	\begin{align*}
		\bNorm{h_k - U(g_k)}{L^q_{\lambda}} &\leq \sum_{i=1}^\infty|a_i|\|\Phi-U(u_i)\|_{L_\lambda^q} \\
		&\leq \|\{a_i\}\|_{\ell^{q'}}\big(\sum_{i=1}^\infty\|\Phi-U(u_i)\|_{L_\lambda^q}^q\big)^\frac{1}{q} \leq \|\{a_i\}\|_{\ell^{q'}}\big(\sum_{i=1}^\infty2^{-iq}\big)^\frac{1}{q} <\infty
	\end{align*}
uniformly in $k,$ and as $U(g_k)$ converges in $L^q_{\lambda},$ it follows
that $\sup_{k\in\N}\Norm{h_k}{L^q_{\lambda}} \lesssim 1$ and
\begin{align*}
	1\gtrsim \sup_{k\in\N}\Norm{h_k}{L^q_{\lambda}}  =\sup_{k\in\N}\sum_{i=1}^k|a_i|\|\Phi\|_{L_\lambda^q} \sim \Norm{\{a_i\}_i}{\ell^1},
\end{align*}
which contradicts $\{a_i\}\not\in\ell^1.$
\end{proof}

\begin{lem}\label{lem:comp:nec1} Let $\nu\in A_{\infty}$ and suppose that $b\in \BMO_{\nu}^{\alpha}\setminus\VMO_{\nu}^{\alpha}$ with $\alpha>-d.$ Then, there exists a sparse collection of cubes $\mathscr{S} = \{Q,E_Q\}_{Q\in\mathscr{S}}$ such that $\calO_{\nu}^{\alpha}(b;E_Q) \gtrsim 1,$ uniformly.
%		\begin{align*}
% \calO_{\nu}^{\beta}(b;E_Q) = \nu(E_Q)^{-\beta}\int_{E_Q}\abs{b-\ave{b}_{E_Q}} \gtrsim 1.
%		\end{align*}
	\end{lem}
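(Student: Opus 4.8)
The plan is to extract the desired collection directly from the \emph{failure} of the conditions defining $\VMO_\nu^\alpha$, using the weighted fractional John--Nirenberg inequality as the one non-elementary ingredient. Since $b\notin\VMO_\nu^\alpha$, at least one of \eqref{eq:VMOv1}, \eqref{eq:VMOv2}, \eqref{eq:VMOv3} must fail (by Remark \ref{rem:vmo3eq} the third may be read as \eqref{eq:VMOv33}), and each failure yields $\delta_0>0$ and cubes $\{Q_i\}_{i\geq 1}$ with $\calO_\nu^\alpha(b;Q_i)\geq\delta_0$ and, respectively, $\ell(Q_i)\to 0$, or $\ell(Q_i)\to\infty$, or $Q_i=Q_0+x_i$ with $|x_i|\to\infty$. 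Invoking the $\tfrac13$-trick and the doubling of $\nu$ exactly as in the proof of Remark \ref{rem:vmo3eq}, one replaces each $Q_i$ by a dyadic cube of comparable size containing it while keeping $\calO_\nu^\alpha(b;Q_i)\gtrsim\delta_0$, and then pigeonholes the finitely many adjacent systems so that all the $Q_i$ lie in one dyadic grid $\scrD$. Since any two dyadic cubes are nested or disjoint, the infinite Ramsey theorem produces a subsequence that is either pairwise disjoint or a strictly monotone chain for inclusion. In the disjoint case the family is already sparse with $E_Q=Q$, and $\calO_\nu^\alpha(b;E_Q)=\calO_\nu^\alpha(b;Q)\geq\delta_0$ finishes the proof; so one may assume a chain, which after a further thinning reads $Q_1\supsetneq Q_2\supsetneq\cdots$ or $Q_1\subsetneq Q_2\subsetneq\cdots$ with consecutive side lengths differing by as large a factor as desired (the size of each gap being fixed once the earlier cubes are chosen).

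The crux is then to shrink each $Q_i$ to a major subset $E_i$, disjoint from its chain neighbour (that neighbour being $Q_{i+1}$ in the decreasing case and $Q_{i-1}$ in the increasing case; call it $Z_i$), on which $\calO_\nu^\alpha(b;E_i)\gtrsim\delta_0$. I would take $E_i=Q_i\setminus(S_i\cup Z_i)$, where $S_i=\{x\in Q_i:|b(x)-\ave{b}_{Q_i}|>\tau_i\}$ and $\tau_i$ is chosen, by the weighted fractional John--Nirenberg inequality of Appendix \ref{sect:appB}, so large that $|S_i|<\tfrac12(1-\gamma)|Q_i|$; the thinning makes $|Z_i|<\tfrac12(1-\gamma)|Q_i|$ as well, so $|E_i|>\gamma|Q_i|$, for a fixed $\gamma=\gamma(\delta_0,\Norm{b}{\BMO_\nu^\alpha},d,\alpha,[\nu]_{A_\infty})$ close to $1$. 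John--Nirenberg controls $\int_{S_i}|b-\ave{b}_{Q_i}|$, and on $Q_i\setminus S_i\supseteq Z_i\setminus S_i$ one has $|b-\ave{b}_{Q_i}|\leq\tau_i$ with $\tau_i$ a fixed multiple of the natural scale; together these make $\int_{S_i\cup Z_i}|b-\ave{b}_{Q_i}|$ much smaller than $\delta_0\,\nu(Q_i)^{1+\alpha/d}$, so $\int_{E_i}|b-\ave{b}_{Q_i}|\gtrsim\delta_0\,\nu(Q_i)^{1+\alpha/d}$; the same two facts bound $|\ave{b}_{E_i}-\ave{b}_{Q_i}|$ by a small multiple of $\delta_0\,\nu(Q_i)^{1+\alpha/d}/|Q_i|$, whence $\int_{E_i}|b-\ave{b}_{E_i}|\geq\int_{E_i}|b-\ave{b}_{Q_i}|-|E_i|\,|\ave{b}_{E_i}-\ave{b}_{Q_i}|\gtrsim\delta_0\,\nu(Q_i)^{1+\alpha/d}$. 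Since $|E_i|\sim|Q_i|$ and $\nu$ is doubling, $\nu(E_i)\sim\nu(Q_i)$, and dividing by $\nu(E_i)^{1+\alpha/d}$ yields $\calO_\nu^\alpha(b;E_i)\gtrsim\delta_0$. As each $E_i$ avoids $Z_i$, the $E_i$ are pairwise disjoint, so $\mathscr{S}=\{Q_i,E_i\}_i$ is $\gamma$-sparse with the required uniform oscillation lower bound.

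The main difficulty is precisely this passage from a cube to a major subset: an $L^1$ mean oscillation can a priori concentrate on a set of arbitrarily small measure, so one genuinely needs quantitative higher integrability, and it is exactly the finiteness of $\Norm{b}{\BMO_\nu^\alpha}$ --- through the weighted fractional John--Nirenberg inequality --- that provides it; this is also why $\gamma$ and the implicit constant in $\calO_\nu^\alpha(b;E_i)\gtrsim\delta_0$ must be allowed to depend on $\delta_0/\Norm{b}{\BMO_\nu^\alpha}$. The hypothesis $\alpha>-d$, i.e.\ $1+\alpha/d>0$, is what makes $\nu(Q_i)^{1+\alpha/d}$ the right normalization against which the excised pieces $S_i$ and $Z_i$ are negligible (using also the doubling, and where needed the reverse doubling, of the $A_\infty$ weight $\nu$).
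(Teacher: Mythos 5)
Your route is correct in outline but genuinely different from the paper's, and it has one step that needs more care plus one real loss of generality.

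The paper does not invoke John--Nirenberg at all in this lemma. For the small-cubes failure of \eqref{eq:VMOv1}, it first extracts a subsequence with $\sum_j\abs{Q_j}<\infty$ and then proves, by dominated convergence, the \emph{per-cube} statement \eqref{eq:calOEhalf}: for each fixed $Q$ there is a $\theta_Q\in(0,\tfrac12)$ such that any $E\subset Q$ with $\abs{E}\geq(1-\theta_Q)\abs{Q}$ has $\calO^\alpha_\nu(b;E)>\tfrac12\calO^\alpha_\nu(b;Q)$. It then picks a further subsequence inductively so that the tail $\bigcup_{i>j}Q_i'$ has measure below $\theta_{Q_j'}\abs{Q_j'}$, and sets $E_j=Q_j'\setminus\bigcup_{i>j}Q_i'$; no Ramsey dichotomy and no quantitative oscillation estimate are needed. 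For the big-cubes failure of \eqref{eq:VMOv2}, the paper builds the collection iteratively and, when the new big cube $P_k$ swallows the previous one(s), shows $\calO_\nu^\alpha(b;P_k\setminus Q_0)\gtrsim 1$ by a telescoping chain of intermediate cubes $R_0\subset\cdots\subset R_m$ together with the $A_\infty$ small-measure estimate $\nu(E)/\nu(Q)\lesssim(\abs{E}/\abs{Q})^\delta$; again only the hypothesis $\alpha>-d$ (i.e.\ $\beta=1+\alpha/d>0$) is used, to sum the geometric series in \eqref{eq:seq2}. Your Ramsey reduction to a chain plus a single ``excise a small-measure piece'' lemma is a legitimate unification and, when it works, buys a \emph{uniform} sparseness constant $\gamma$ depending only on $\delta_0/\Norm{b}{\BMO_\nu^\alpha}$, $[\nu]_{A_\infty}$, $d$, $\alpha$, whereas the paper's $\theta_{Q}$ is allowed to vary with the cube. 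The paper trades that uniformity for elementarity and full generality in $\alpha$.

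Two concrete issues with your version. First, the claim that ``$\tau_i$ is chosen, by the weighted fractional John--Nirenberg inequality, so large that $\abs{S_i}<\tfrac12(1-\gamma)\abs{Q_i}$'' does not follow cleanly from Theorem \ref{appBthm0}: that theorem controls $\int_Q(\abs{b-\ave{b}_Q}/\nu)^r\,\nu$, and the level set $S_i=\{\abs{b-\ave{b}_{Q_i}}>\tau_i\}$ is not a level set of $\abs{b-\ave{b}_{Q_i}}/\nu$, so Chebyshev does not directly bound $\nu(S_i)$, let alone $\abs{S_i}$. The clean fix is to drop $S_i$ entirely and argue directly on $Z_i$: for any $E\subset Q$ one has, by H\"older and Theorem \ref{appBthm0},
\begin{align*}
	\int_E\abs{b-\ave{b}_Q}
	\leq\Big(\int_Q\big(\tfrac{\abs{b-\ave{b}_Q}}{\nu}\big)^r\,\nu\Big)^{1/r}\nu(E)^{1/r'}
	\lesssim \Norm{b}{\BMO_\nu^\alpha}\,\nu(Q)^{1+\alpha/d}\Big(\tfrac{\nu(E)}{\nu(Q)}\Big)^{1/r'},
\end{align*}
and then $\nu(E)/\nu(Q)\lesssim(\abs{E}/\abs{Q})^\varepsilon$ by $A_\infty$; taking $E=Z_i$ with $\abs{Z_i}/\abs{Q_i}$ small (achievable by thinning the chain) makes both $\int_{E_i}\abs{b-\ave{b}_{Q_i}}\gtrsim\delta_0\nu(Q_i)^{1+\alpha/d}$ and $\abs{\ave{b}_{E_i}-\ave{b}_{Q_i}}\leq\abs{E_i}^{-1}\int_{Z_i}\abs{b-\ave{b}_{Q_i}}$ suitably small, whence $\calO_\nu^\alpha(b;E_i)\gtrsim\delta_0$ exactly as you outline. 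Second, Theorem \ref{appBthm0} is proved in the appendix only for $\alpha\in[0,\infty)$ (the step $\sum_Q (w(Q)/w(Q_0))^{1+\alpha r/d}\lesssim 1$ uses $\alpha\geq0$), while the lemma is stated for all $\alpha>-d$ and Proposition \ref{prop:comp:nec} invokes it for general $1<p,q<\infty$, which allows $\alpha<0$. So as written your argument proves a strictly weaker statement than the lemma; the paper's DCT argument for small cubes and the telescoping-chain argument for big cubes avoid John--Nirenberg and only use $\beta=1+\alpha/d>0$, hence cover the full claimed range.

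Minor points: the Ramsey step is a $3$-colouring (disjoint, $Q_i\subsetneq Q_j$, $Q_j\subsetneq Q_i$), which infinite Ramsey handles; and for the translation case \eqref{eq:VMOv3} you do not need Remark \ref{rem:vmo3eq} at all, since the failing condition already hands you translates $Q_0+x_i$ of a fixed cube with $\abs{x_i}\to\infty$, which can be thinned to be pairwise disjoint.
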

	\begin{proof}  
		If the $\VMO_{\nu}^{\alpha}$ condition fails via \eqref{eq:VMOv3}, then the construction of $\mathscr{S}$ is immediate; we can guarantee that all the cubes are disjoint by choosing cubes ever farther away from the origin and we simply let $E_Q = Q.$
		
		For the duration of this proof let us denote $\beta = 1+\alpha/d>0.$
		If the $\VMO_{\nu}^{\alpha}$ condition fails via \eqref{eq:VMOv1}, then we are guaranteed a sequence of cubes $\{Q_j\}_j$ such that 
		\[
		\calO_{\nu}^{\alpha}(b;Q_j)\gtrsim 1,\qquad \lim_{j\to\infty}\ell(Q_j)=0.
		\]
		By passing to a subsequence, we may assume that $\sum_{j=1}^\infty\abs{Q_j}<\infty$.
		
In order to construct the disjoint subsets sets $E_j=E_{Q_j}$, we make the following observation: 
\begin{equation}\label{eq:calOEhalf}
  \forall Q\quad \exists \theta_Q\in(0,\frac12)\quad \forall E\subset Q:\quad \abs{E}\geq(1-\theta_Q)\abs{Q}\quad\Rightarrow\quad
  \calO_{\nu}^{\alpha}(b;E)>\frac12  \calO_{\nu}^{\alpha}(b;Q).
\end{equation}
Indeed, if not, then we can find a sequence of $E_n\subset Q$ with $\calO_{\nu}^{\alpha}(b;E_n)\leq\frac12\calO_{\nu}^{\alpha}(b;Q)$, while $\abs{E_n}\to\abs{Q}$, and a subsequence will satisfy $1_{E_n}\to 1_Q$ almost everywhere. Then dominated convergence shows that $\nu(E_n)=\int 1_{E_n}\nu\to \nu(Q)$ and $\ave{b}_{E_n}=\abs{E_n}^{-1}\int_Q 1_{E_n}b\to\ave{b}_Q$,
and finally
\begin{equation*}
  \int_{E_n}\abs{b-\ave{b}_{E_n}}=\int_Q 1_{E_n}\abs{b-\ave{b}_Q+(\ave{b}_Q-\ave{b}_{E_n})}\to \int_Q\abs{b-\ave{b}_Q},
\end{equation*}
so that $\calO_{\nu}^\alpha(b;E_n)\to \calO_{\nu}^\alpha(b;Q)$, contradicting $\calO_{\nu}^{\alpha}(b;E_n)\leq\frac12\calO_{\nu}^{\alpha}(b;Q)$, thus proving \eqref{eq:calOEhalf}.

We now pick a subsequence $\{Q_j'=Q_{k(j)}\}_j$ of the original sequence $\{Q_j\}_j$ and as follows. Let $Q_1'=Q_1$. Assuming that $Q_1',\ldots,Q_j'=Q_{k(j)}$ have already been chosen, we choose $k(j+1)>k(j)$ large enough so that
\begin{equation*}
  \sum_{i=k(j+1)}^\infty\abs{Q_i}\leq\theta_j\abs{Q_j'},\qquad\theta_j=\theta_{Q_j'},
\end{equation*}
and then set $Q_{j+1}'=Q_{k(j+1)}$. For the sequence thus constructed, we hence have
\begin{equation*}
  \sum_{i=j+1}^\infty\abs{Q_i'}\leq\sum_{i=k(j+1)}^\infty\abs{Q_i}\leq\theta_j\abs{Q_j'},
\end{equation*}
and thus $E_j=Q_j'\setminus\bigcup_{i=j+1}^\infty Q_i'$ satisfies $\abs{E_j}\geq(1-\theta_{Q_j'})\abs{Q_j}$. By \eqref{eq:calOEhalf}, this implies that $\calO_{\nu}^\alpha(b;E_j)\geq\frac12\calO_{\nu}^\alpha(b;Q_j')\gtrsim 1$. This gives the required sparse collection $\{Q,E_Q\}_{Q\in\mathscr S}=\{Q_j',E_j\}_{j=1}^\infty$ in this case.
		
%		By passing to a subsequence, we may assume that for any  prescribed sequence $\{\theta_j\}_{j=0}^{\infty}$ of positive numbers, there holds that
%		\begin{align}\label{eq:seq11}
%			\sum_{i=j+1}^{\infty}\abs{Q_i}\leq\theta_j\abs{Q_j}.
%		\end{align}
%		Now let us define $\mathscr{S}_{\theta} = \{Q_j, E_j\}_{j=0}^{\infty}$, where $E_j = Q_j\setminus \bigcup_{i=j+1}^{\infty}Q_i.$
%		
%		 It is immediate from \eqref{eq:seq11} that $\abs{E_j}\to \abs{Q_j}$ as $\theta_j\to 0$ and that the sets $E_j$ are disjoint and constitute major subsets. Especially, since $b\in L^1_{\loc}$ and $\nu\in A_{\infty},$  by the dominated convergence theorem (zero measurable sets are equivalent with respect to $\nu$ and the Lebesgue measure) we have 
%		\begin{align}\label{eq:seq12}
%			\lim_{\theta_j\to 0} \nu(E_j)^{-\beta}\int_{E_j}\abs{b-\ave{b}_{E_j}} = 	\lim_{\theta_j\to 0} 	 \nu(E_j)^{-\beta}\lim_{\theta_j\to 0}\int_{E_j}\abs{b-\ave{b}_{E_j}}= \calO_{\nu}^{\alpha}(b;Q_j)\gtrsim 1.
%		\end{align}
%	To conclude for each $j$ we choose $\theta_j$ sufficiently large to guarantee by \eqref{eq:seq12} that $\calO_{\nu}^{\alpha}(b;E_j)\gtrsim 1,$ and then $\mathscr{S}_{\theta}$ is the sought-after collection.

		Lastly, suppose that the $\VMO_{\nu}^{\alpha}$ condition fails via \eqref{eq:VMOv2}, i.e. we find a sequence of cubes $\{Q_j\}_{j=0}^{\infty}$ such that 
		\[
		\calO_{\nu}^{\alpha}(b;Q_j)\gtrsim 1,\qquad \lim_{j\to\infty}\ell(Q_j)=\infty.
		\]
	We only show first step of the iterative construction of $\mathscr{S},$ as the subsequent inductive steps are entirely analogous. We begin with setting $\mathscr{S}_0 = \{Q_0,E_{Q_0}\}$ with $E_{Q_0} = Q_0$ and show how to construct $\mathscr{S}_1\supset \mathscr{S}_0.$ Let us denote $P_k=Q_j$ for some $j$ such that $\ell(Q_j)\geq 2^k$ and $\ell(P_k)\geq \ell(Q_0).$ If $P_k\cap Q_0=\emptyset,$ we choose $Q_1 = P_k$ and $E_{Q_1} = Q_1$ and with $\mathscr{S}_1 = \{Q_i,E_{Q_i}\}_{i=0,1}$ we are done. If $P_k\cap Q_0\not=\emptyset,$ by $\ell(P_k)\geq \ell(Q_0)$ there exists a cube $\widehat{P}_k\supset P_k\cup Q_0$ such that $\ell(P_k)\sim\ell(\widehat{P}_k);$ and by $\nu$ being doubling, we know that $\calO_{\nu}^{\alpha}(\widehat{P}_k)\gtrsim \calO_{\nu}^{\alpha}(P_k) \gtrsim 1.$ Hence, without loss of generality, let us denote $P_k = \widehat{P}_k$ and assume that $Q_0\subset P_k.$ We will next show that if $k$ is sufficiently large, then $E_{P_k} = P_k\setminus Q_0$ satisfies  $\calO_{\nu}^{\alpha}(b;E_{P_k}) \gtrsim 1,$ and then we set $Q_1 = P_k$ and $E_{Q_1} = E_{P_k}.$
	
	We first estimate
	\begin{align}\label{eq:seq0}
		\int_{P_k}\abs{b-\ave{b}_{E_{P_k}}} \leq \int_{Q_0}\abs{b-\ave{b}_{Q_0}} +  \int_{Q_0}\abs{\ave{b}_{Q_0}-\ave{b}_{E_{P_k}}} +\int_{E_{P_k}}\abs{b-\ave{b}_{E_{P_k}}},
	\end{align}
where the last term is of the desired form. We will next show that 
\begin{align}\label{eq:seq1}
		\lim_{k\to\infty} 	\nu(P_k)^{-\beta}\int_{Q_0}\abs{b-\ave{b}_{Q_0}}=0,\qquad  	\lim_{k\to\infty}	\nu(P_k)^{-\beta}\int_{Q_0}\abs{\ave{b}_{Q_0}-\ave{b}_{E_{P_k}}} = 0.
\end{align}
The left claim on line \eqref{eq:seq1} is immediate from $b\in L^1_{\loc}$ and $\nu\in A_{\infty}$ and we provide details only for the right limit.
	Let $\{R_k\}_{k=0}^{m}$ (we have $m\sim\log_2(\abs{P_k}/\abs{Q_0})$ but the exact value of $m$ will not play a further role in the argument) be a sequence of cubes such that
	\[
	R_0 = Q_0,\qquad R_m = P_k,\qquad R_k\subset R_{k+1},\qquad \ell(R_{k+1})\sim \ell(R_k),
	\] 
also denote $R_{m+1} = E_{P_k}.$ Then, we estimate
	\begin{align}
		\abs{\ave{b}_{Q_0}-\ave{b}_{E_{P_k}}}\leq \sum_{j=0}^{m} \abs{\ave{b}_{R_{j+1}}-\ave{b}_{R_{j}}}\lesssim \Norm{b}{\BMO_{\nu}^{\alpha}} \sum_{j=0}^{m} \frac{\nu(R_j)^{\beta}}{\abs{R_j}}
	\end{align}
	and hence we find that 
	\begin{align}\label{eq:seq2}
	\nu(P_k)^{-\beta}\int_{Q_0}\abs{\ave{b}_{Q_0}-\ave{b}_{E_{P_k}}}\lesssim \Norm{b}{\BMO_{\nu}^{\alpha}}  \sum_{j=0}^{m} \frac{\abs{Q_0}}{\abs{R_j}}\Big(\frac{\nu(R_j)}{\nu(P_k)}\Big)^{\beta}.
	\end{align}
Next we show that the sum tends to zero as $k$ (equivalently $m$) tends to infinity.
Since $\nu\in A_{\infty}$ we know that for some $\delta>0$ there holds that  $\nu(E_Q)/\nu(Q)\lesssim \big(\abs{E_Q}/\abs{Q}\big)^{\delta}$ for all $E_Q\subset Q.$ Hence,
\begin{align*}
	 \sum_{j=0}^{m} \frac{\abs{Q_0}}{\abs{R_j}}\Big(\frac{\nu(R_j)}{\nu(P_k)}\Big)^{\beta} &\lesssim \sum_{j=0}^{m} \frac{\abs{Q_0}}{\abs{R_j}}\Big(\frac{\abs{R_j}}{\abs{P_k}}\Big)^{\delta\beta}\lesssim \sum_{j=0}^{m} 2^{-jd} 2^{-(m-j)d\delta\beta}\\ 
	 &\lesssim  \sum_{j=0}^{m} 2^{-\max\{j,m-j\} \min(1,\delta\beta)d}
\end{align*}
and clearly the right-hand side tends to zero by $\beta>0$; we conclude the right limit on line \eqref{eq:seq1}.
All in all, using
\begin{equation*}
  \int_{P_k}\abs{b-\ave{b}_{P_k}}
  =\int_{P_k}\abs{b-c-\ave{b-c}_{P_k}}
  \leq 2\int_{P_k}\abs{b-c}
\end{equation*}
for any constant $c$, and then lines \eqref{eq:seq0} and \eqref{eq:seq1}, we have shown that for any $\varepsilon>0$ there exists $k$ large enough so that
\begin{equation}\label{eq:seq5}
\begin{split}
		1&\lesssim 	\calO_{\nu}^{\alpha}(b;P_k) =\nu(P_k)^{-\beta}\int_{P_k}\abs{b-\ave{b}_{P_k}} \lesssim	\nu(P_k)^{-\beta}\int_{P_k}\abs{b-\ave{b}_{E_{P_k}}} \\ 
			&\lesssim \varepsilon + \nu(P_k)^{-\beta}\int_{E_{P_k}}\abs{b-\ave{b}_{E_{P_k}}} \\ 
			& \lesssim \varepsilon + 	\nu(E_{P_k})^{-\beta}\int_{E_{P_k}}\abs{b-\ave{b}_{E_{P_k}}} = \varepsilon + \calO_{\nu}^{\alpha}(b;E_{P_k}).
\end{split}
\end{equation}
Choosing $\varepsilon>0$ small enough, we conclude that $\calO_{\nu}^{\alpha}(b;E_{P_k}) \gtrsim 1.$
	\end{proof}

\begin{prop}\label{prop:comp:nec}
	Let $1<p,q<\infty,$ $\mu\in A_{p,p}$ and $\lambda\in A_{q,q}$. Let $\nu$ be the fractional Bloom weight of Definition \ref{defn:bloom:weight}, let $T$ be a non-degenerate SIO and $b\in L_\mathrm{loc}^1(\R^d;\C)$. Then, $[b,T]\in \mathcal{K}(L^p_{\mu}, L^q_{\lambda})$ implies that $b\in \VMO_{\nu}^\alpha(\R^d)$.
\end{prop}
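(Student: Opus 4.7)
The argument is by contradiction. Assume $[b,T]$ is compact but $b\notin \VMO_{\nu}^{\alpha}$. Since compact operators are bounded, the boundedness half of Theorem~\ref{thm:main1} (already proved earlier in the paper) yields $b\in\BMO_{\nu}^{\alpha}$. The Bloom weight $\nu$ is in $A_{\infty}$ by Proposition~\ref{prop:bloom1}, and $\alpha/d = 1/p-1/q > -1$, so Lemma~\ref{lem:comp:nec1} produces a sparse collection $\{Q_j,E_j\}_{j\geq 1}$ with $\calO_{\nu}^{\alpha}(b;E_j)\gtrsim 1$ uniformly. This converts the qualitative failure of $\VMO_\nu^\alpha$ into a concrete sequence of large-oscillation sets with disjoint major subsets $E_j$.

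For each $j$ we apply Proposition~\ref{prop:osc1} to the pair $(Q_j,E_j)$: this outputs a twin cube $\wt{Q}_j$ with $\dist(Q_j,\wt{Q}_j)\sim \ell(Q_j)=\ell(\wt{Q}_j)$, a $\gamma$-major subset $\wt{E}_j\subset\wt{Q}_j$, and functions $g_{\wt{E}_j}=1_{\wt{E}_j}$, $h_{E_j}$ (with $|h_{E_j}|\lesssim 1_{E_j}$) and their swapped counterparts satisfying \eqref{osc1}. By pigeonholing we restrict to a subsequence on which the first term $|\langle [b,T]g_{\wt{E}_j},h_{E_j}\rangle|$ dominates in \eqref{osc1} (the opposite case is symmetric, handled by using $h_{\wt{E}_j}$ as the test input instead). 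Thinning further, we may ensure that the twins $\{\wt{Q}_j\}_j$ are pairwise disjoint, which is possible because Lemma~\ref{lem:comp:nec1} gives us considerable freedom to select the $Q_j$ with sizes tending to $0$ or $\infty$, or with centres escaping to infinity.

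Set $u_j := g_{\wt{E}_j}/\mu^p(\wt{E}_j)^{1/p}$, so that $\Norm{u_j}{L^p_\mu}=1$ and the supports $\wt{E}_j$ are pairwise disjoint. From \eqref{osc1}, H\"older's inequality and $|h_{E_j}|\lesssim 1_{E_j}$ we get
\begin{equation*}
 \int_{E_j}\abs{b-\ave{b}_{E_j}}
 \lesssim \babs{\bave{[b,T]g_{\wt{E}_j},h_{E_j}}}
 \lesssim \Norm{[b,T]g_{\wt{E}_j}}{L^q_\lambda}\,\lambda^{-q'}(E_j)^{1/q'}
 = \mu^p(\wt{E}_j)^{1/p}\lambda^{-q'}(E_j)^{1/q'}\Norm{[b,T]u_j}{L^q_\lambda}.
\end{equation*}
On the other hand, using $\calO_{\nu}^{\alpha}(b;E_j)\gtrsim 1$, the identity $1+\alpha/d = 1/p+1/q'$, and Proposition~\ref{prop:bloom1},
\begin{equation*}
 \int_{E_j}\abs{b-\ave{b}_{E_j}} = \nu(E_j)^{1+\alpha/d}\calO_{\nu}^{\alpha}(b;E_j)
 \gtrsim \nu(E_j)^{1/p+1/q'} \sim \mu^p(E_j)^{1/p}\lambda^{-q'}(E_j)^{1/q'}.
\end{equation*}
Dividing, and noting $\mu^p(E_j)\sim \mu^p(Q_j)\sim \mu^p(\wt{Q}_j)\sim \mu^p(\wt{E}_j)$ (via $\mu^p\in A_p$ doubling and the major-subset property), we conclude $\Norm{[b,T]u_j}{L^q_\lambda}\gtrsim 1$.

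Now invoke compactness: some subsequence $[b,T]u_{j_k}$ converges in $L^q_\lambda$ to a function $\Phi$, and the uniform lower bound forces $\Norm{\Phi}{L^q_\lambda}\gtrsim 1>0$. Thus $\{u_{j_k}\}$ fulfils all three hypotheses of Lemma~\ref{lem:seq} for the bounded operator $U=[b,T]\colon L^p_\mu\to L^q_\lambda$, yielding the desired contradiction. The main technical hurdle is the geometric bookkeeping of the second paragraph, namely, thinning the sparse collection so that the twin cubes $\wt{Q}_j$ become pairwise disjoint while preserving the $A_{p,p}$/$A_{q,q}$ comparability of masses across $E_j,Q_j,\wt{Q}_j,\wt{E}_j$; everything else is a clean combination of the already-established non-degeneracy oscillation estimate, the fractional Bloom weight identities, and the disjoint-support obstruction of Lemma~\ref{lem:seq}.
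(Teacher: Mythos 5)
Your overall strategy matches the paper's: reduce the failure of $\VMO_\nu^\alpha$ to a sparse sequence $\{Q_j,E_j\}$ with $\calO_\nu^\alpha(b;E_j)\gtrsim 1$ via Lemma~\ref{lem:comp:nec1}, feed this into the non-degeneracy oscillation estimate of Proposition~\ref{prop:osc1}, rescale by $\mu^p$ to get a bounded sequence of disjointly supported inputs whose images under $[b,T]$ are uniformly bounded away from $0$, and conclude by the disjoint-support obstruction of Lemma~\ref{lem:seq}. The weight manipulations ($\nu(E_j)^{1+\alpha/d}\sim\mu^p(E_j)^{1/p}\lambda^{-q'}(E_j)^{1/q'}$ via Proposition~\ref{prop:bloom1}, and the $A_\infty$ comparability across $E_j,Q_j,\wt Q_j$) are correct.

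Where you deviate from the paper is in the choice of which side of the pairing carries the disjointly supported functions. You put the input to $[b,T]$ on the twin side, $u_j = 1_{\wt E_j}/\mu^p(\wt E_j)^{1/p}$, and then need the $\wt Q_j$ to be pairwise disjoint. Your justification of this (``Thinning further, we may ensure that the twins are pairwise disjoint'') is a genuine extra step and is not as immediate as you present it. Proposition~\ref{prop:osc1} gives you essentially no choice of $\wt Q_j$: it is pinned by the kernel's non-degeneracy to a cube of size $\ell(Q_j)$ at distance $\sim\ell(Q_j)$ from $Q_j$. In the failure mode \eqref{eq:VMOv2} of Lemma~\ref{lem:comp:nec1}, the $Q_j$ are nested with $\ell(Q_j)\to\infty$ and $E_j=Q_j\setminus Q_{j-1}$; you would then have to argue that after passing to a rapidly growing subsequence the $\wt Q_j$ land in disjoint annuli around the common base point, and similarly in the shrinking case \eqref{eq:VMOv1} after first forcing the centres to converge. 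This can be made to work, but it is a non-trivial geometric argument that you have only gestured at.

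The paper sidesteps this entirely by applying the oscillation estimate in the reverse orientation: it takes $\phi_{E_j}\in\{1_{E_j},h_{E_j}\}$, supported on $E_j$, as the \emph{input} to $[b,T]$, and tests against $\psi_{\wt Q_j}$, supported on $\wt Q_j$. Since the $E_j$ are already pairwise disjoint by the very construction of Lemma~\ref{lem:comp:nec1}, no thinning of the twins is needed; the $u_{E_j}=\mu^p(Q_j)^{-1/p}\phi_{E_j}$ are automatically disjointly supported and Lemma~\ref{lem:seq} applies at once. I'd recommend either flipping your application of Proposition~\ref{prop:osc1} so the inputs live on $E_j$ (cleanest), or, if you keep your orientation, writing out the geometric thinning argument for each of the three $\VMO$-failure cases --- as it stands, that step is the one real gap in your write-up.
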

\begin{proof}
As the commutator is compact, it is in particular bounded, and by Proposition \ref{prop:bdd:lb} we have $b\in \BMO_{\nu}^\alpha$.
	Let us assume for a contradiction that $b\notin\VMO_{\nu}^{\alpha}$,  i.e. $b\in\BMO_{\nu}^\alpha\setminus\VMO_{\nu}^{\alpha},$ and by Proposition \ref{lem:comp:nec1} find a sparse sequence $\{Q_j, E_j\}_{j=1}^\infty$ of cubes such that 
 $\mathcal{O}_\nu^\alpha(b;E_j) \gtrsim 1.$ By Proposition \ref{prop:osc1} applied with $g_{E_j} = 1_{E_j}$ and $g_{\wt{E}} = g_{\wt{Q}_j} = 1_{\wt{Q}_j}$ we have
	\begin{equation}\label{eq:necessarycompactnesswaf}
	\begin{split}
		\int_{E_j}|b-\langle b\rangle_{E_j}| 
		&\lesssim \big|\big\langle [b,T]g_{E_j}, h_{\widetilde{Q}_j}\big\rangle\big|+\big|\big\langle [b,T]h_{E_j},g_{\widetilde{Q}_j}\big\rangle\big| \\
		&\sim \max\Big\{\big|\langle [b,T]g_{E_j}, h_{\widetilde{Q}_j}\rangle\big|,\big|\big\langle [b,T]h_{E_j},g_{\widetilde{Q}_j}\big\rangle\big|\Big\} = \big|\big\langle [b,T]\phi_{E_j}, \psi_{\wt{Q}_j}\big\rangle\big|,
	\end{split}
	\end{equation}
	 for the choice of $\phi_{E_j} \in \{g_{E_j},h_{E_j}\}$ and $\psi_{\wt{Q}_j}\in \{ h_{\wt{Q}_j}, g_{\wt{Q}_j}\}$ that achieve the maximum. By Proposition \ref{prop:osc1} we have
	 \begin{align}
	\abs{ \phi_{E_j}}\lesssim 1_{E_j},\qquad \abs{\psi_{\wt{Q}_j}}\lesssim 1_{\wt{Q}_j}.
	 \end{align}
	Recall that the implicit constants above are independent of $j$ (as are the implicit constants later on in this proof as well).
	Combining the above information, we have
\begin{equation}\label{eq:uEj}
		\begin{split}
		1 &\lesssim \mathcal{O}_\nu^\alpha(b;E_j) = 	\frac{1}{\nu(E_j)^{1+\alpha/d}}\int_{E_j}|b-\langle b\rangle_{E_j}| \\ 
		&\lesssim \frac{1}{\nu(E_j)^{1+\alpha/d}}\big|\big\langle [b,T]\phi_{E_j}, \psi_{\wt{Q}_j}\big\rangle \big|
		\sim \frac{1}{\nu(Q_j)^{1+\alpha/d}}\big|\big\langle [b,T]\phi_{E_j}, \psi_{\wt{Q}_j}\big\rangle\big| \\
		&\lesssim \frac{1}{\mu^p(Q_j)^{1/p}\lambda^{-q'}(Q_j)^{1/q'}}\big|\big\langle [b,T]\phi_{E_j}, \psi_{\wt{Q}_j}\big\rangle\big| =\frac{1}{\lambda^{-q'}(Q_j)^{1/q'}}\big|\big\langle [b,T]u_{E_j}\lambda, \psi_{\wt{Q}_j}\lambda ^{-1}\big\rangle\big| \\
		&\leq \frac{1}{\lambda^{-q'}(Q_j)^{1/q'}}\|[b,T]u_{E_j}\|_{L_\lambda^q} \|\psi_{\wt{Q}_j}\|_{L_{\lambda^{-1}}^{q'}} \\ 
		&\lesssim \frac{\lambda^{-q'}(\widetilde{Q}_j)^\frac{1}{q'}}{\lambda^{-q'}(Q_j)^\frac{1}{q'}}\|[b,T]u_{E_j}\|_{L_\lambda^q} \sim 	\|[b,T]u_{E_j}\|_{L_\lambda^q}
	\end{split}
\end{equation}
	where we denote $u_{E_j}:=\mu^p(Q_j)^{-1/p}\phi_{E_j}.$
By the uniform bound
	\[
	\|u_{E_j}\|_{L_{\mu}^p}^p=\mu^p(Q_j)^{-1}\int_{Q_j}|\phi_{E_j}|^p\mu^p\leq\|\phi_{E_j}\|_{L^\infty}^p\lesssim 1,
	\]
the compactness of $[b,T]$ gives a subsequence of $\big\{[b,T]u_{E_j}\big\}_{j=1}^\infty$ with a limit $\Phi$ in $L_\lambda^q$, and by \eqref{eq:uEj} we have $\|\Phi\|_{L_\lambda^q}>0$. Furthermore, the functions $u_{E_{j_i}}$ are disjointly supported. Concluding, we have constructed a sequence of functions just as in Lemma \ref{lem:seq}, hence $[b,T]$ is not bounded,  a contradiction, and thus necessarily $b\in \VMO_{\nu}^\alpha(\R^d).$
\end{proof}

%
%
%\newpage 
%\section{Two-weight boundedness on the super-diagonal}
%\subsection{Upper bound}
%\subsection{Lower bound}
%
%\section{Two-weight compactness on the super-diagonal}	
%	\subsection{Sufficiency}
%	\subsection{Necessity}
%

\appendix

\section{Weighted fractional John-Nirenberg inequality for $\VMO$}\label{sect:appB}
Let us define the weighted $\BMO_w^{r,\alpha}$ and $\VMO_{w}^{r,\alpha}\subset \BMO_w^{r,\alpha}$ similarly as in Definition \ref{defn:VMOw} but with the oscillation
%\begin{align}
%	\Norm{f}{\BMO_w^{r,\alpha}} = \sup_{Q\in\calQ} \calO_w^{r,\alpha}(f;Q),
%\end{align}
%where
\begin{align}
	\calO_w^{r,\alpha}(f;Q) = w(Q)^{-\alpha/d}\left( \frac{1}{w(Q)}\int_Q\big(\abs{f-\ave{f}_Q}\frac{1}{w}\big)^{r}\ud  w\right)^{\frac{1}{r}},
\end{align}
note that $\calO_w^{\alpha}(f;Q)= 	\calO_w^{1,\alpha}(f;Q).$
%and through the conditions
%\begin{align}\label{eq:VMOw1}
%		\lim_{r\to 0}\sup_{\ell(Q)=r}\calO_w^{r,\alpha}(f;Q) = 0,
%\end{align}
%\begin{align}\label{eq:VMOw2}
%	\lim_{r\to \infty}\sup_{\ell(Q)=r}\calO_w^{r,\alpha}(f;Q) = 0,
%\end{align}
%\begin{align}\label{eq:VMOw3}
%	\sup_{Q\in\calQ}\lim_{\abs{x}\to \infty}\calO_w^{r,\alpha}(f;Q+x) = 0.
%\end{align}
When $\alpha = 0,$ the following Theorem \ref{appBthm0} is a classical result of Muckenhoupt and Wheeden \cite{MucWhe1976wbmo}.
\begin{thm}\label{appBthm0} Let $1\leq p <\infty$ and suppose that $w\in A_p.$ Let $\alpha\in[0,\infty).$ Then, there holds that 
	\begin{align}\label{appB:thm1}
		\Norm{b}{\BMO_w^{r,\alpha}}\sim \Norm{b}{\BMO_w^{1,\alpha}},
	\end{align}
	whenever $1\leq r \leq p'$ and $r<\infty.$
\end{thm}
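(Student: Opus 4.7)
The plan is to establish $\Norm{b}{\BMO_w^{r,\alpha}}\sim\Norm{b}{\BMO_w^{1,\alpha}}$ by proving the two directions separately. Since $\calO_w^{r,\alpha}(b;Q)=w(Q)^{-\alpha/d}\calO_w^{r,0}(b;Q)$ for every cube $Q$ and every $r$, the prefactor $w(Q)^{-\alpha/d}$ is a common multiplier on both sides of the claimed equivalence and plays no role. Thus the whole argument reduces instantly to the case $\alpha=0$, which is the classical weighted John-Nirenberg inequality of Muckenhoupt-Wheeden \cite{MucWhe1976wbmo}; the appendix essentially just records that result in the natural fractional scaling.

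For the easy direction $\Norm{b}{\BMO_w^{1,\alpha}}\leq\Norm{b}{\BMO_w^{r,\alpha}}$, I would work cube-by-cube and use H\"older's inequality in the probability measure $\ud w/w(Q)$ on $Q$, with exponents $r$ and $r'$. Noting that $\int_Q\abs{b-\ave{b}_Q}/w\,\ud w=\int_Q\abs{b-\ave{b}_Q}\,\ud x$, this gives
\begin{align*}
  \frac{1}{w(Q)}\int_Q\frac{\abs{b-\ave{b}_Q}}{w}\,\ud w\leq\Bigg(\frac{1}{w(Q)}\int_Q\Big(\frac{\abs{b-\ave{b}_Q}}{w}\Big)^{r}\ud w\Bigg)^{1/r},
\end{align*}
which after multiplying by the common factor $w(Q)^{-\alpha/d}$ is precisely $\calO_w^{1,\alpha}(b;Q)\leq\calO_w^{r,\alpha}(b;Q)$; taking the supremum over $Q$ concludes this direction.

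For the hard direction I reduce to $\alpha=0$ and normalize $\Norm{b}{\BMO_w^{1,0}}\leq 1$, and then the goal becomes $\int_Q(\abs{b-\ave{b}_Q}/w)^r\,\ud w\lesssim w(Q)$ uniformly in $Q$. The main technical ingredient is the weighted John-Nirenberg distributional estimate of Muckenhoupt-Wheeden \cite{MucWhe1976wbmo}: for $w\in A_\infty$ (which follows from $w\in A_p$) there exist constants $c_1,c_2>0$ such that, under this normalization,
\begin{align*}
  w\bigg(\Big\{x\in Q:\frac{\abs{b-\ave{b}_Q}(x)}{w(x)}>\lambda\Big\}\bigg)\leq c_1 w(Q)e^{-c_2\lambda},\qquad\lambda>0
\end{align*}
(in one of the standard equivalent forms). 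Granting this, a layer-cake computation in the $\ud w$-measure then gives
\begin{align*}
  \int_Q\Big(\frac{\abs{b-\ave{b}_Q}}{w}\Big)^{r}\ud w=r\int_0^\infty\lambda^{r-1}w\bigg(\Big\{\frac{\abs{b-\ave{b}_Q}}{w}>\lambda\Big\}\cap Q\bigg)\ud\lambda\lesssim w(Q),
\end{align*}
since $\int_0^\infty\lambda^{r-1}e^{-c_2\lambda}\ud\lambda<\infty$ for all $r<\infty$, and dividing by $w(Q)$ and taking $r$-th roots finishes the proof.

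The main obstacle would be proving the weighted John-Nirenberg distributional estimate from scratch, but since this is classical I would simply invoke it; its proof is the standard Calder\'on-Zygmund stopping-time iteration exploiting the doubling of $w\in A_\infty$. The allowed range $1\leq r\leq p'$ of the theorem is the natural one: the upper bound $r\leq p'$ guarantees that $w^{1-r}=w^{-(r-1)}$ is locally integrable (via $w^{-1/(p-1)}\in L^1_{\loc}$, which follows from $w\in A_p$, combined with H\"older), so that $\calO_w^{r,\alpha}(b;Q)$ is well-defined for bounded $b$; the lower bound $r\geq 1$ is needed for the H\"older step in the easy direction.
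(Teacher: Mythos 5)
Your easy direction is correct and coincides with the paper's: H\"older (equivalently Jensen) in the probability measure $\ud w/w(Q)$ gives $\calO_w^{1,\alpha}(b;Q)\leq\calO_w^{r,\alpha}(b;Q)$. Your observation that $w(Q)^{-\alpha/d}$ factors out of $\calO_w^{r,\alpha}(b;Q)$ uniformly in $r$, so that both norms reduce cube-by-cube to the $\alpha=0$ case, is also valid (the paper carries $\alpha$ explicitly, but your reduction is sound).

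The hard direction, however, rests on a distributional estimate that does not hold in the generality you assert, and this is a genuine gap. If one had
\begin{equation*}
  w\Bigl(\Bigl\{x\in Q:\frac{\abs{b-\ave{b}_Q}(x)}{w(x)}>\lambda\Bigr\}\Bigr)\leq c_1\,w(Q)\,e^{-c_2\lambda}
\end{equation*}
with $c_1,c_2$ depending only on $[w]_{A_p}$ and the normalization $\Norm{b}{\BMO_w^{1,0}}\leq 1$, then your layer-cake computation would yield $\calO_w^{r,0}(b;Q)\lesssim_r 1$ for \emph{every} finite $r$, making the hypothesis $r\leq p'$ vacuous. But that hypothesis is not decorative: $r\leq p'$ is exactly the condition $w^{1-r}\in A_r$ (since $w^{1-r}\in A_r\Leftrightarrow w\in A_{r'}$, and $w\in A_p\subset A_{r'}$ iff $r'\geq p$), and once $r$ exceeds this range the weight $w^{1-r}$ can fail to be locally integrable -- already for power weights near the boundary of $A_p$ -- so that $\calO_w^{r,\alpha}(b;Q)=\infty$ for perfectly harmless $b\in\BMO_w^{1,\alpha}$. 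What Muckenhoupt--Wheeden \cite{MucWhe1976wbmo} actually prove is the $L^1$--$L^{p'}$ norm equivalence for the weighted oscillation, not an exponential John--Nirenberg, and the ``standard CZ stopping-time iteration'' you appeal to gives exponential decay for $\abs{b-\ave{b}_Q}$ against a doubling measure, not for the $w$-renormalized quantity $\abs{b-\ave{b}_Q}/w$, whose self-similar structure is broken by the $1/w$ factor. The paper's proof instead goes through Proposition \ref{appBprop}: an iterated Calder\'on--Zygmund sparse decomposition dominating $\abs{b-\ave{b}_{Q_0}}1_{Q_0}$, followed by the almost-orthogonality estimate of Lemma \ref{lem:Pyth} applied in $L^r(w^{1-r})$ -- and it is precisely there that the membership $w^{1-r}\in A_r$, equivalently $r\leq p'$, is invoked. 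To repair your argument you would need to replace the claimed exponential estimate by such a sparse almost-orthogonality bound, or else cite the precise (non-exponential) statement from \cite{MucWhe1976wbmo}.
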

Here we give a short proof of Theorem \ref{appBthm0} and extend it to weighted fractional $\VMO$ as

\begin{thm}\label{appBthm1} Let $1\leq p <\infty$ and suppose that $w\in A_p.$ Let $\alpha\in[0,\infty).$ Then, there holds that 
	\begin{align}\label{appB:thm0}
	\VMO_w^{r,\alpha} = \VMO_w^{1,\alpha},
	\end{align}
	whenever $1\leq r \leq p'$ and $r<\infty.$ 
\end{thm}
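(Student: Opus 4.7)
The inclusion $\VMO_w^{r,\alpha} \subset \VMO_w^{1,\alpha}$ is immediate from H\"older's inequality: interpreting $w(Q)^{-1}\int_Q\,\ud w$ as averaging against a probability measure on $Q$ and applying H\"older with exponent $r\geq 1$ to the function $|b-\ave{b}_Q|/w$ yields the pointwise bound $\calO_w^{1,\alpha}(b;Q)\leq \calO_w^{r,\alpha}(b;Q)$ for every cube $Q$. Consequently, each of the conditions \eqref{eq:VMOv1}--\eqref{eq:VMOv3} for $\calO_w^{r,\alpha}$ implies the corresponding one for $\calO_w^{1,\alpha}$. For the converse inclusion, the plan is to reduce matters to a localized form of the weighted John--Nirenberg inequality that underlies Theorem \ref{appBthm0}.

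The key step is the following \emph{local} bound: for every cube $Q_0$,
\begin{equation}\label{eq:local_JN_plan}
  \calO_w^{r,\alpha}(b;Q_0)\lesssim \sup_{Q\subset Q_0}\calO_w^{1,\alpha}(b;Q),
\end{equation}
with implicit constant depending only on $d, p, r, \alpha$ and $[w]_{A_p}$. This is the cube-localized version of Theorem \ref{appBthm0}: the Muckenhoupt--Wheeden argument rests on a weighted Calder\'on--Zygmund stopping-time decomposition performed inside a fixed cube, which only invokes averages of $|b-\ave{b}_{Q_0}|$ over dyadic descendants of $Q_0$, together with the reverse H\"older inequality supplied by $w\in A_p$ (which is what forces the restriction $r\leq p'$). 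Thus \eqref{eq:local_JN_plan} follows by running exactly the same argument, simply without taking the supremum over $Q_0$ at the end.

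Granting \eqref{eq:local_JN_plan}, the three VMO conditions transfer directly. For instance, given $\vare>0$ and $b\in\VMO_w^{1,\alpha}$, pick $\rho>0$ so small that $\calO_w^{1,\alpha}(b;Q)<\vare$ whenever $\ell(Q)<\rho$. For any $Q_0$ with $\ell(Q_0)<\rho$, every $Q\subset Q_0$ also satisfies $\ell(Q)<\rho$, so \eqref{eq:local_JN_plan} yields $\calO_w^{r,\alpha}(b;Q_0)\lesssim \vare$, establishing \eqref{eq:VMOv1} for $\calO_w^{r,\alpha}$. Conditions \eqref{eq:VMOv2} and \eqref{eq:VMOv3} are obtained in the same way, since both ``$\ell(Q_0)$ large'' and ``$Q_0$ far from the origin'' are inherited by every subcube of $Q_0$.

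The main obstacle is executing the Calder\'on--Zygmund iteration with the fractional factor $w(Q_0)^{\alpha/d}$ in place while keeping all constants uniform in $Q_0$: one must track this factor through the stopping-time decomposition and relate $w(P)^{\alpha/d}$ for stopping cubes $P\subset Q_0$ back to $w(Q_0)^{\alpha/d}$ via the doubling of $w$ and a reverse H\"older inequality. This is precisely the content of the proof of Theorem \ref{appBthm0} that the authors indicate they will supply in the appendix, and which can be read off of \eqref{eq:local_JN_plan} by taking the supremum over $Q_0$.
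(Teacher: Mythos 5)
Your reduction of the two inclusions, your local estimate \eqref{eq:local_JN_plan} (which indeed follows from the paper's Proposition \ref{appBprop} combined with the sparsity bound $\sum_{Q\in\mathscr S(Q_0)}w(Q)^{1+\alpha r/d}\lesssim w(Q_0)^{1+\alpha r/d}$), and your treatment of conditions \eqref{eq:VMOv1} and \eqref{eq:VMOv3} all match the paper's argument.

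However, the sentence ``Conditions \eqref{eq:VMOv2} and \eqref{eq:VMOv3} are obtained in the same way, since both `$\ell(Q_0)$ large' and `$Q_0$ far from the origin' are inherited by every subcube of $Q_0$'' contains a genuine error: the property ``$\ell(Q_0)$ large'' is \emph{not} inherited by subcubes. Any large cube $Q_0$ contains subcubes of arbitrarily small side length, and for those the $\VMO_w^{1,\alpha}$ hypothesis gives you only the uniform $\BMO_w^{1,\alpha}$ bound, not smallness. Consequently the blanket bound \eqref{eq:local_JN_plan}, which throws away all information except $\sup_{Q\subset Q_0}\calO_w^{1,\alpha}(b;Q)$, cannot by itself establish condition \eqref{eq:VMOv2}, since that supremum is generically $\sim\Norm{b}{\BMO_w^{1,\alpha}}$ and does not go to zero as $\ell(Q_0)\to\infty$.

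What the paper does for \eqref{eq:VMOv2} is use the full, weighted form of Proposition \ref{appBprop}, i.e.\
\begin{equation*}
  \calO_w^{r,\alpha}(b;Q_0)^r \lesssim \frac{1}{w(Q_0)^{1+\frac{\alpha}{d}r}}\sum_{L\in\mathscr S(Q_0)}\calO_w^{1,\alpha}(b;L)^r\,w(L)^{1+\frac{\alpha}{d}r},
\end{equation*}
rather than the cruder bound \eqref{eq:local_JN_plan}. One fixes a cube $P_0$ so that (by \eqref{eq:VMOv2} and \eqref{eq:VMOv3}) every $L$ with $\ell(L)\geq\ell(P_0)$ or $L\cap P_0=\emptyset$ satisfies $\calO_w^{1,\alpha}(b;L)\leq\vare$, arranges $P_0\in\mathscr D(Q_0)$, and splits the sparse sum into cubes $L\subset P_0$ and cubes meeting $P_0^c$. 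The second group consists of cubes that are either large or disjoint from $P_0$, hence all satisfy $\calO_w^{1,\alpha}(b;L)\leq\vare$; this part is $\lesssim\vare^r w(Q_0)^{1+\alpha r/d}$. The first group is only controlled by the $\BMO$ norm, but its total mass is $\lesssim w(P_0)^{1+\alpha r/d}$, which is \emph{fixed}; dividing by $w(Q_0)^{1+\alpha r/d}$, which tends to infinity as $Q_0$ grows, kills that contribution. This gain from the weight factors on the right-hand side of Proposition \ref{appBprop} is exactly the ingredient your argument discards when you pass to the uniform estimate \eqref{eq:local_JN_plan}, and it is what makes the large-scale condition \eqref{eq:VMOv2} work.
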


\begin{rem} If $w\in A_p,$ then by the reverse Hölder property $w\in A_{p-\delta}$ for some $\delta>0.$ Hence, the conclusions \eqref{appB:thm0} and \eqref{appB:thm1} hold with $1\leq r < (p-\delta)',$ where $(p-\delta)' > p'.$
Restating, for each $w\in A_p$ there exists some $\varepsilon>0$ such that the conclusions \eqref{appB:thm0} and \eqref{appB:thm1} hold for all $1\leq r < p'+\varepsilon.$
\end{rem}

Both Theorems \ref{appBthm0} and \ref{appBthm1} follow almost immediately from the following
\begin{prop}\label{appBprop} Let $1\leq p <\infty$ and suppose that $w\in A_p,$ and let $\alpha\in[0,\infty).$  Then, for each $f\in L^1_{\loc}$ and a cube $Q_0,$ there exists a sparse collection $\mathscr{S}(Q_0)\subset \mathscr{D}(Q_0)$ 
%	(sparse constant independent of $f,Q_0$) 
	such that
	\begin{align}
		\calO_w^{r,\alpha}(f;Q_0)\lesssim \Big(\frac{1}{w(Q_0)^{1+\frac{\alpha}{d} r}}\sum_{Q\in\mathscr{S}(Q_0)}\calO_w^{1,\alpha}(f;Q)^rw(Q)^{1+\frac{\alpha}{d} r}\Big)^{\frac{1}{r}},
	\end{align}
	whenever $1\leq r \leq p'$ and $r<\infty.$ 
\end{prop}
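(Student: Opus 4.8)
The plan is to prove the claimed sparse bound by a local Calderón–Zygmund stopping-time argument on the fixed cube $Q_0$, applied to the function $f-\ave{f}_{Q_0}$. First I would set up the stopping cubes: starting from $Q_0$, select maximal dyadic subcubes $Q'\subset Q_0$ for which $\fint_{Q'}\abs{f-\ave{f}_{Q_0}}>C\fint_{Q_0}\abs{f-\ave{f}_{Q_0}}$ for a large dimensional constant $C$; iterate this inside each selected cube. Standard arguments show the resulting collection $\mathscr S(Q_0)$ (together with $Q_0$ itself) is sparse, since the measure of the union of the children selected inside any stopping cube is a fixed fraction $<1$ of the parent. The key pointwise fact is that on the complement of the union of the first-generation stopping cubes, $\abs{f-\ave{f}_{Q_0}}\lesssim \fint_{Q_0}\abs{f-\ave{f}_{Q_0}}$ a.e., and more generally, telescoping the averages along the stopping tree yields
\begin{align}\label{eq:planptwise}
  \abs{f-\ave{f}_{Q_0}}\lesssim \sum_{Q\in\mathscr S(Q_0)}\Big(\fint_Q\abs{f-\ave{f}_Q}\Big)1_Q \qquad\text{a.e. on }Q_0,
\end{align}
which is exactly the content already recorded (for a single cube) in Lemma \ref{lem:aug}; so in fact I can simply quote Lemma \ref{lem:aug} with $Q=Q_0$ to obtain \eqref{eq:planptwise} and skip the stopping-time construction altogether.

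Given \eqref{eq:planptwise}, the next step is to insert it into the definition of $\calO_w^{r,\alpha}(f;Q_0)$. Writing $a_Q=\fint_Q\abs{f-\ave{f}_Q}$, so that $\calO_w^{1,\alpha}(f;Q)=w(Q)^{-\alpha/d}\,a_Q\abs{Q}/w(Q)$ — equivalently $a_Q\abs{Q}=\calO_w^{1,\alpha}(f;Q)\,w(Q)^{1+\alpha/d}$ — I need to bound
\begin{align*}
  w(Q_0)^{-\alpha/d}\Big(\frac{1}{w(Q_0)}\int_{Q_0}\Big(\sum_{Q\in\mathscr S(Q_0)}a_Q 1_Q\cdot\frac1w\Big)^r\ud w\Big)^{1/r}.
\end{align*}
The quantity inside is $\Norm{\sum_Q a_Q 1_Q}{L^r(w^{1-r}\,\mathrm dx)}=\Norm{\sum_Q a_Q 1_Q}{L^r(w^{1-r})}$ up to the normalizing factors; note $w^{1-r}=\sigma$ is (comparable to) the dual weight when $r=p'$ and $w\in A_p$. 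The main estimate is therefore an $L^r$ bound for a sparse sum $\sum_{Q\in\mathscr S(Q_0)}a_Q 1_Q$ against the measure $w^{1-r}\mathrm dx$, and the natural tool is the sparse/Carleson-embedding inequality: for a sparse family and $1<r<\infty$,
\begin{align*}
  \Big\|\sum_{Q\in\mathscr S(Q_0)}c_Q 1_Q\Big\|_{L^r(\rho)}
  \lesssim\Big(\sum_{Q\in\mathscr S(Q_0)}c_Q^r\,\rho(Q)\Big)^{1/r}
\end{align*}
provided $\rho$ satisfies a suitable doubling/$A_\infty$-type condition relative to the sparse family — here $\rho=w^{1-r}\mathrm dx$, which lies in a Muckenhoupt class because $w\in A_p$ and $r\le p'$, so $w^{1-r}=w^{-(r-1)}$ is comparable to $\sigma=w^{-1/(p-1)}$-type weights and is in particular $A_\infty$. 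Applying this with $c_Q=a_Q/w(Q)\cdot\text{(const)}$ — more precisely matching powers so that $c_Q^r\rho(Q)$ produces the right-hand side — and then converting back via $a_Q\abs{Q}=\calO_w^{1,\alpha}(f;Q)w(Q)^{1+\alpha/d}$ and $w^{1-r}(Q)\lesssim w(Q)\abs{Q}^{-r}\cdot(\text{reverse Hölder})$, one collects exactly the exponent $1+\tfrac{\alpha}{d}r$ on $w(Q)$ claimed in the statement. I should be careful that when $r=1$ the sparse sum estimate is trivial (just the triangle inequality) and no Muckenhoupt hypothesis on the auxiliary measure is needed; the case $1<r\le p'$ is where $w\in A_p$ enters, ensuring $w^{1-r}$ has the reverse-Hölder / $A_\infty$ property that powers the Carleson embedding.

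The main obstacle I anticipate is the bookkeeping of weights and exponents: matching $\int_Q w^{1-r}$ to a power of $w(Q)$ requires the reverse Hölder inequality for $w\in A_p$ (which gives $\fint_Q w^{1+\eps}\lesssim(\fint_Q w)^{1+\eps}$ and its dual version for $w^{-1/(p-1)}$), and one must check these apply with the precise exponent $1-r$ for all $1\le r\le p'$, using $w\in A_p\Rightarrow w^{1-p'}\in A_{p'}$. A clean way to organize this: reduce to $r=p'$ first (the largest allowed exponent, hence the strongest statement, since $\Norm{\cdot}{\ell^{p'}}\le\Norm{\cdot}{\ell^r}$... actually the reverse, so one argues directly for general $r$), then apply the sparse-form/Carleson embedding theorem for the pair $(w\,\mathrm dx, w\,\mathrm dx)$ — which is precisely the two-weight testing condition that is automatic here because both "weights" coincide. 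In fact the slickest route is to recognize the desired inequality as dual to the boundedness of the sparse operator $\mathcal A_{\mathscr S}\colon L^r(w)\to L^r(w)$, which holds for any $w\in A_\infty$ (indeed for any weight, by the Carleson embedding, when the sparse family is adapted); since $w\in A_p\subset A_\infty$ this is immediate from the literature cited in the paper (e.g. \cite{Lerner2015}), and the fractional powers $w(Q)^{1+\alpha r/d}$ come out of the normalization without extra work. So the real content is \eqref{eq:planptwise} (= Lemma \ref{lem:aug}) plus a standard sparse $L^r(w)$ bound, and the rest is algebra.
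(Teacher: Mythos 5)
Your plan matches the paper's proof: quote Lemma \ref{lem:aug} for the pointwise sparse domination of $\abs{f-\ave{f}_{Q_0}}$, pass to the $L^r(w^{1-r})$ norm of the resulting sparse sum, apply an $\ell^r$ almost-orthogonality bound for the sparse family (this is exactly Lemma \ref{lem:Pyth}, applied with $w^{1-r}\in A_r$ in place of $w$, which is where $r\le p'$ is used via $w\in A_p\subset A_{r'}$), and close by algebra. Two small slips in your bookkeeping sketch are worth flagging: the inequality needed at the end is $\ave{w}_Q^{r-1}\ave{w^{1-r}}_Q\le[w]_{A_{r'}}^{1/(r'-1)}$, i.e.\ precisely the $A_{r'}$ characteristic of $w$ (not a reverse-H\"older estimate, and not $w^{1-r}(Q)\lesssim w(Q)\abs{Q}^{-r}$ as written, which has the wrong power of $w(Q)$); and the assertion that $\mathcal{A}_{\mathscr S}\colon L^r(w)\to L^r(w)$ is bounded for every $w\in A_\infty$ is too strong (that needs $w\in A_r$) --- what you actually invoke is the weaker sparse-form estimate $\bNorm{\sum_Q c_Q 1_Q}{L^r(\rho)}\lesssim\big(\sum_Q c_Q^r\rho(Q)\big)^{1/r}$, which does hold for $\rho\in A_\infty$ and is applied here to $\rho=w^{1-r}\in A_r\subset A_\infty$.
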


\begin{proof}[Proof of Theorem \ref{appBthm0}] Suppose first that $b\in \BMO_w^{r,\alpha}$ for some $r>1.$ Then, by H\"older's inequality we find 
	\begin{align*}
		\int_Q\abs{b-\ave{b}_Q} &\leq 	\left(\int_Q\abs{b-\ave{b}_Q}^r w^{-\frac{r}{r'}}\right)^{\frac{1}{r}}\Big(\int_Q w\Big)^{\frac{1}{r'}} \\ 
		&=	\left( \frac{1}{w(Q)}\int_Q\big(\abs{b-\ave{b}_Q}\frac{1}{w}\big)^{r}\ud  w\right)^{\frac{1}{r}}\Big(\int_Q w\Big)^{\frac{1}{r}}\Big(\int_Q w\Big)^{\frac{1}{r'}} \\ 
		&\leq  \Norm{b}{\BMO_w^{r,\alpha}} w(Q)^{\alpha/d}w(Q).
	\end{align*}
For the other direction, by Proposition \eqref{appBprop} we estimate
\begin{align*}
	\calO_w^{r,\alpha}(f;Q_0)&\lesssim \Big(\frac{1}{w(Q_0)^{1+\frac{\alpha}{d} r}}\sum_{Q\in\mathscr{S}(Q_0)}\calO_w^{1,\alpha}(f;Q)^rw(Q)^{1+\frac{\alpha}{d} r}\Big)^{\frac{1}{r}},\\ 
	&\leq 	\Norm{b}{\BMO_w^{1,\alpha}}  \Big(\frac{1}{w(Q_0)}\sum_{Q\in\mathscr{S}(Q_0)}w(Q)\Big)^{\frac{1}{r}}\lesssim 	\Norm{b}{\BMO_w^{1,\alpha}},
\end{align*}
where we used sparseness in the last estimate; recall that sparseness with respect to any measure in $A_{\infty}$ is equivalent with sparseness with respect to the Lebesgue measure.
\end{proof}

\begin{proof}[Proof of Theorem \ref{appBthm1}] Suppose that $b\in \VMO_{w}^{1,\alpha}$ and we will show that $b\in\VMO_w^{r,\alpha}.$ Let us consider the different kinds of cubes as in the conditions \eqref{eq:VMOv1}, \eqref{eq:VMOv2}, \eqref{eq:VMOv3}. If a cube is sufficiently small, or sufficiently far away from the origin, as respectively in the conditions  \eqref{eq:VMOv1} and  \eqref{eq:VMOv3},  then all the subcubes are also, and it is immediate from Proposition \ref{appBprop} that the conditions  \eqref{eq:VMOv1} and  \eqref{eq:VMOv3} hold with all $1\leq r\leq p'.$ Let us then turn to check the condition  \eqref{eq:VMOv2}. 
	
By conditions \eqref{eq:VMOv1} and \eqref{eq:VMOv3} there exists a cube $P_0$ such that if $\ell(Q)\geq \ell(P_0)$ or $Q\cap P_0\not=\emptyset,$ then $\calO_w^{1,\alpha}(b;Q)\leq \varepsilon,$ and let $Q$ be exactly such a cube. We show that $\calO_w^{r,\alpha}(b;Q)\lesssim \varepsilon,$ provided that $Q$ is taken sufficiently large. Let $Q_0\supset Q$ be a cube such that $\ell(Q_0)\lesssim \ell(Q)$ and $P_0\in\mathscr{D}(Q_0),$ notice that we can always arrange this with the implicit constant independent of $Q.$ By $w$ being doubling, we know that $\calO_w^{r,\alpha}(b;Q)\lesssim 	\calO_w^{r,\alpha}(b;Q_0).$ 
Let $\mathscr{S}(Q_0)\subset \mathscr{D}(Q_0)$ be the sparse collection as in Proposition \ref{appBprop} and we estimate
\begin{align*}
	w(Q_0)^{1+\frac{\alpha}{d}r}\calO_w^{r,\alpha}(b;Q_0)^r\lesssim\Big( \sum_{\substack{L\in\mathscr{S}(Q_0) \\ L\subset P_0}} + \sum_{\substack{L\in\mathscr{S}(Q_0) \\ L\cap P_0^c\not=\emptyset}}\Big)\calO_w^{1,\alpha}(b;L)^rw(L)^{1+\frac{\alpha}{d} r}
\end{align*}
The first sum on the right is controlled by the fact that $\VMO_w^{1,\alpha}\subset \BMO_w^{1,\alpha}$ as
\begin{align*}
	\sum_{\substack{L\in\mathscr{S}(Q_0) \\ L\subset P_0}} \calO_w^{1,\alpha}(b;L)^rw(L)^{1+\frac{\alpha}{d} r}\leq \Norm{b}{\BMO_w^{1,\alpha}}^r	\sum_{\substack{L\in\mathscr{S}(Q_0) \\ L\subset P_0}} w(L)^{1+\frac{\alpha}{d} r}\lesssim \Norm{b}{\BMO_w^{1,\alpha}}^rw(P_0)^{1+\frac{\alpha}{d} r},
\end{align*}
where we used that $\Norm{\cdot}{\ell^s}\leq \Norm{\cdot}{\ell^1},$ for $s\geq 1,$ and sparsity;
while for the second sum if $L\cap P_0^c\not=\emptyset,$ then by $L,P_0\in\mathscr{D}(Q_0)$ either $P_0\subset L$ or $L\subset P_0^c,$ and in both cases by the choice of $P_0$ we know that $\calO_w^{1,\alpha}(b;L)\leq \varepsilon.$ 
Hence, we find that
\begin{align*}
	w(Q_0)^{1+\frac{\alpha}{d} r}\calO_w^{r,\alpha}(b;Q_0)^r\lesssim \Norm{b}{\BMO_w^{1,\alpha}}^r w(P_0)^{1+\frac{\alpha}{d} r}+ \varepsilon^r w(Q_0)^{1+\frac{\alpha}{d} r},
\end{align*}
from which the claim follows by choosing $Q$ (hence $Q_0$) sufficiently large.
\end{proof}

In proving Proposition \ref{appBprop} we can use the following almost orthogonality principle.
\begin{lem}\label{lem:Pyth} Let $1<p<\infty$ and $w\in A_p,$ let $\mathscr{S}\subset \mathscr{D}$ be a sparse collection of cubes. For each $Q\in\mathscr{S},$ let $f_Q$ be a function supported on $Q$ that is constant on each $P\in \ch(Q),$
where $\ch(Q)$ denotes the maximal elements of $\{P\in\mathscr{S}:P\subsetneq Q\}$.
Then, there holds that 
	\begin{align}
		\bNorm{\sum_{Q\in\mathscr{S}} f_Q }{L^p(w)}\lesssim \big(\sum_{Q\in\mathscr{S}}\Norm{f_Q}{L^p(w)}^p\big)^{\frac{1}{p}}.
	\end{align} 
\end{lem}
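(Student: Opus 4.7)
The plan is to proceed by duality combined with the weighted Carleson embedding theorem for $A_\infty$ weights. By the duality $L^p(w\,dx)^* = L^{p'}(w\,dx)$, it is enough to bound $\sum_{Q\in\mathscr{S}}\int f_Q\, g\, w$ uniformly in $g$ with $\Norm{g}{L^{p'}(w)} \leq 1$. Since $\mathscr{S}\subset\mathscr{D}$ is sparse and, being a sub-collection of a dyadic grid, also nested, each $Q$ carries a pairwise disjoint ``own piece'' $E_Q = Q\setminus\bigcup_{P\in\ch(Q)}P$, and writing $c_{Q,P}$ for the constant value of $f_Q$ on each $P\in\ch(Q)$, I would decompose
\[
\int f_Q\, g\, w = \int_{E_Q} f_Q\, g\, w + \sum_{P\in\ch(Q)} c_{Q,P}\,\ave{g}_{P,w}\, w(P).
\]

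For the $E_Q$-piece, the disjointness of $\{E_Q\}_Q$ together with H\"older's inequality in sequences immediately gives
\[
\Babs{\sum_Q\int_{E_Q} f_Q\, g\, w}\leq \Big(\sum_Q \Norm{f_Q 1_{E_Q}}{L^p(w)}^p\Big)^{\!1/p}\Norm{g}{L^{p'}(w)}\leq \Big(\sum_Q \Norm{f_Q}{L^p(w)}^p\Big)^{\!1/p}.
\]
For the child-piece I would reindex by the child $P$, noting that each non-maximal $P\in\mathscr{S}$ has a unique sparse parent $\pi(P)\in\mathscr{S}$, and setting $c_P := c_{\pi(P),P}$; H\"older's inequality in $\ell^p$ then gives
\[
\Babs{\sum_P c_P\,\ave{g}_{P,w}\, w(P)}\leq \Big(\sum_P \abs{c_P}^p w(P)\Big)^{\!1/p}\Big(\sum_P \ave{g}_{P,w}^{p'} w(P)\Big)^{\!1/p'}.
\]
Since $f_{\pi(P)}$ is constant equal to $c_P$ on $P$, one has $\abs{c_P}^p w(P) = \int_P\abs{f_{\pi(P)}}^p w$, and the disjointness of $\ch(Q)$ for each fixed $Q$ then yields the desired bound $\sum_P \abs{c_P}^p w(P)\leq \sum_Q \Norm{f_Q}{L^p(w)}^p$.

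The main obstacle is to control the remaining factor $\sum_P \ave{g}_{P,w}^{p'} w(P)$, which I would handle by the weighted Carleson embedding theorem: the required Carleson condition $\sum_{P\in\mathscr{S},\, P\subseteq R}w(P)\lesssim w(R)$ follows at once from the sparseness of $\mathscr{S}$ together with $w\in A_\infty$, since the $\gamma$-major subsets $E_P\subset P$ are pairwise disjoint and $A_\infty$ upgrades the Lebesgue lower bound $\abs{E_P}\geq\gamma\abs{P}$ to $w(E_P)\gtrsim w(P)$. The standard Carleson embedding (obtained via the $L^{p'}(w)$-boundedness of the dyadic maximal function associated with $\mathscr{S}$ and the measure $w\,dx$) then delivers $\sum_P \ave{g}_{P,w}^{p'} w(P)\lesssim \Norm{g}{L^{p'}(w)}^{p'}\leq 1$, and combining everything with duality yields the claim.
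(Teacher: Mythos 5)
Your proof is correct and shares the paper's overall skeleton — split $f_Q$ over $E_Q=Q\setminus\bigcup_{P\in\ch(Q)}P$ and the children, handle $E_Q$ by disjointness, handle the children by duality and H\"older — but the last step is genuinely different. The paper keeps the Lebesgue averages $\ave{|f_Q|}_P$, $\ave{|g|}_P$, dualizes against the unweighted pairing $\int(\cdot)\,g$, uses the pointwise bound $|P|^{p'}w(P)^{-p'/p}\leq w^{-p'/p}(P)$, and then invokes Lemma~\ref{lem:prePyth} for the \emph{dual} weight $w^{-p'/p}\in A_{p'}$; this is where the $A_p$ hypothesis is actually consumed. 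You instead dualize against the $w$-weighted pairing, pass to $w$-averages $\ave{g}_{P,w}$, and close with the weighted Carleson embedding $\sum_P\ave{g}_{P,w}^{p'}w(P)\lesssim\|g\|_{L^{p'}(w)}^{p'}$, whose maximal-function ingredient (the Doob inequality for $M_w^{\mathscr D}$) holds for any weight and whose Carleson condition only uses sparseness together with $w\in A_\infty$. Thus your route is marginally more economical in what it extracts from the hypotheses (only $A_\infty$ is truly needed at that step), whereas the paper's route is self-contained given that Lemma~\ref{lem:prePyth} is already on hand. Both are standard and equally short; there is no gap.
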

Lemma \ref{lem:Pyth} is certainly known but we could not find a reference, hence we recall it here.
\begin{proof}
By $f_Q$ being constant on $\ch(Q)$ we can write
\begin{align}
		\bNorm{\sum_{Q\in\mathscr{S}} f_Q }{L^p(w)}\leq 	\bNorm{\sum_{Q\in\mathscr{S}} f_Q1_{E_Q} }{L^p(w)} + 	\bNorm{\sum_{Q\in\mathscr{S}}\sum_{P\in\ch(Q)} \ave{f_Q}_P1_P}{L^p(w)}
\end{align}
and by disjointness of $E_Q\subset Q$ the left sum on the right clearly satisfies the desired estimate. For the second one, by duality, it is enough to estimate as follows. We have
\begin{align*}
	\int \sum_{Q\in\mathscr{S}}\sum_{P\in\ch(Q)} &\ave{f_Q}_P1_P\cdot g = \sum_{Q\in\mathscr{S}}\sum_{P\in\ch(Q)} \ave{f_Q}_P w(P)^{\frac{1}{p}} \ave{g}_P \abs{P} w(P)^{-\frac{1}{p}} \\
	&\leq \Big( \sum_{Q\in\mathscr{S}}\sum_{P\in\ch(Q)} \ave{\abs{f_Q}}_P^p w(P)\Big)^{\frac{1}{p}}\Big( \sum_{Q\in\mathscr{S}}\sum_{P\in\ch(Q)} \ave{\abs{g}}_P^{p'} \abs{P}^{p'}w(P)^{-\frac{p'}{p}}\Big)^{\frac{1}{p'}}.
\end{align*}
By $f_Q$ being constant on $\ch(Q)$ we have 
\begin{align}
	 \Big( \sum_{Q\in\mathscr{S}}\sum_{P\in\ch(Q)} \ave{\abs{f_Q}}_P^p w(P)\Big)^{\frac{1}{p}} \leq \Big(\sum_{Q\in\mathscr{S}}\Norm{f_Q}{L^p(w)}^p\Big)^{\frac{1}{p}}
\end{align}
and it remains to estimate the term with the function $g.$ By Hölder's inequality, we know that
$\abs{P}^{p'}w(P)^{-\frac{p'}{p}} = \abs{P}\ave{w}_P^{-\frac{p'}{p}} \leq w^{-\frac{p'}{p}}(P),$
and hence 
\begin{align*}
	 \Big( \sum_{Q\in\mathscr{S}}\sum_{P\in\ch(Q)} \ave{\abs{g}}_P^{p'} \abs{P}^{p'}w(P)^{-\frac{p'}{p}}\Big)^{\frac{1}{p'}}&\leq \Big( \sum_{Q\in\mathscr{S}}\sum_{P\in\ch(Q)} \ave{\abs{g}}_P^{p'} w^{-\frac{p'}{p}}(P)\Big)^{\frac{1}{p'}} \\
	 &\lesssim [w^{-\frac{p'}{p}}]_{A_{p'}}^{\frac{p}{p'}}\Norm{g}{L^{p'}(w^{-\frac{p'}{p}})} \sim_{[w]_{A_p}}  \Norm{g}{L^{p'}(w^{-\frac{p'}{p}})},
\end{align*}
where in the second estimate we used Lemma \ref{lem:prePyth} below.
\end{proof}

\begin{lem}\label{lem:prePyth} Suppose that $1<p<\infty,$ that $w\in A_p$ and that $\mathscr{S}$ is sparse. Then, there holds that 
	\begin{align*}
		\Big(\sum_{Q\in\mathscr{S}} \ave{f}_Q^pw(Q)\Big)^{\frac{1}{p}}\lesssim \Norm{f}{L^p(w)}.
	\end{align*}
\end{lem}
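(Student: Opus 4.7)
The plan is to derive this as a routine consequence of the pointwise bound by the Hardy--Littlewood maximal function $M$ combined with its weighted $L^p(w)$-boundedness when $w \in A_p$ (Muckenhoupt's theorem). Without loss of generality we may assume $f \geq 0$.

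By sparseness, for each $Q \in \mathscr{S}$ there is a major subset $E_Q \subset Q$ with $\abs{E_Q} \geq \gamma \abs{Q}$, and the family $\{E_Q\}_{Q \in \mathscr{S}}$ is pairwise disjoint. Since $w \in A_p \subset A_{\infty}$, the $A_\infty$ property applied to $E_Q \subset Q$ yields $w(E_Q) \gtrsim w(Q)$ with an implicit constant depending only on $\gamma$ and $[w]_{A_p}$. On the other hand, for every $x \in E_Q \subset Q$ we have $\ave{f}_Q \leq Mf(x)$, and integrating this pointwise bound over $E_Q$ (using the $w$-measure) gives
\begin{align*}
	\ave{f}_Q^p w(Q) \lesssim \ave{f}_Q^p w(E_Q) \leq \int_{E_Q} (Mf)^p w.
\end{align*}

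Summing over $Q \in \mathscr{S}$ and invoking the disjointness of $\{E_Q\}_{Q \in \mathscr{S}}$,
\begin{align*}
	\sum_{Q \in \mathscr{S}} \ave{f}_Q^p w(Q) \lesssim \sum_{Q \in \mathscr{S}} \int_{E_Q} (Mf)^p w \leq \int_{\R^d} (Mf)^p w \lesssim \Norm{f}{L^p(w)}^p,
\end{align*}
where the last estimate is the classical Muckenhoupt weighted maximal inequality. Taking $p$-th roots gives the claim.

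There is no genuine obstacle; every ingredient is standard. The only point worth flagging is that sparseness in the paper is stated with respect to Lebesgue measure, while we need it with respect to $w$: this transfer is precisely what the $A_\infty$ property provides, which is why the hypothesis $w \in A_p$ suffices and nothing stronger is required.
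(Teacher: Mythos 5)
Your proof is correct and follows exactly the same route as the paper: replace $w(Q)$ by $w(E_Q)$ via the $A_\infty$ property of $w\in A_p$ for the disjoint major subsets $E_Q$, dominate $\ave{f}_Q 1_{E_Q}\leq Mf$ pointwise, and apply Muckenhoupt's weighted maximal theorem. No differences worth noting.
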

\begin{proof} By sparseness and the boundedness of the maximal function on $L^p(w)$, we find that
\begin{align*}
		\Big(\sum_{Q\in\mathscr{S}} \ave{f}_Q^pw(Q)\Big)^{\frac{1}{p}} \sim_{[w]_{A_p}} \Big(\int\big(\sum_{Q\in\mathscr{S}} \ave{f}_Q^p 1_{E_Q}\big)\ud w\Big)^{\frac{1}{p}} \leq \Big(\int (Mf)^p\ud w\Big)^{\frac{1}{p}} \lesssim_{[w]_{A_p}} \Norm{f}{L^p(w)},
\end{align*}
where we used the boundedness of the maximal operator in the last step.
\end{proof}

\begin{proof}[Proof of Proposition \ref{appBprop}] Given that $f\in L^1_{\loc},$ by an iterated Calderón-Zygmund decomposition, see e.g. \cite{LOR1} and this is also a special case of Lemma \ref{lem:aug}, there exists a sparse collection $\mathscr{S}(Q_0)\subset\mathscr{D}(Q_0)$ so that 
	\begin{align*}
		\abs{f-\ave{f}_{Q_0}}1_{Q_0}\lesssim \sum_{Q\in\mathscr{S}(Q_0)}\ave{\abs{f-\ave{f}_Q}}_Q1_Q.
	\end{align*}
Note that $p\leq r'$ (by $r\leq p'$) and hence $w^{1-r}\in A_r$ (by $w\in A_p\subset A_{r'}$). Hence, by  Lemma \ref{lem:Pyth} we estimate
\begin{align*}
	w(Q_0)^{1+\frac{\alpha}{d} r}\calO_w^{r,\alpha}(f;Q_0)^r &\lesssim 	\int_{Q_ 0}\Big(\sum_{Q\in\mathscr{S}(Q_ 0)} \bave{\abs{f-\ave{f}_Q}}_Q \frac{1_Q}{w}\Big)^r\ud w \\ 
	&= \BNorm{\sum_{Q\in\mathscr{S}(Q_0)} w(Q)^{\frac{\alpha}{d}}\ave{w}_Q\calO_w^{1,\alpha}(f;Q)1_Q}{L^r(w^{1-r})}^r \\ 
	&\lesssim [w^{1-r}]_{A_r}^r\sum_{Q\in\mathscr{S}(Q_0)}\BNorm{ w(Q)^{\frac{\alpha}{d}}\ave{w}_Q\calO_w^{1,\alpha}(f;Q)1_Q}{L^r(w^{1-r})}^r \\ 
	&\sim_{[w]_{A_{r'}}}\sum_{Q\in\mathscr{S}(Q_0)}  w(Q)^{\frac{\alpha}{d} r}\calO_w^{1,\alpha}(f;Q)^r\ave{w}_Q^rw^{1-r}(Q) \\ 
	&\lesssim_{[w]_{A_{r'}}}\sum_{Q\in\mathscr{S}(Q_0)}\calO_w^{1,\alpha}(f;Q)^r w(Q)^{1+\frac{\alpha}{d} r},
\end{align*}
	where in the last estimate we used that $\ave{w}_Q^rw^{1-r}(Q)\leq [w]_{A_{r'}}^{\frac{1}{r'-1}}w(Q).$ Now the proof is concluded after dividing with $w(Q_0)^{1+\frac{\alpha}{d} r}$ and taking  the $r$th root. 
%	and computing the exponent $ [w]_{A_{r'}}^{\frac{1}{r}(r(r-1)+\frac{1}{r'-1})} = [w]_{A_{r'}}^{\frac{r+1}{r'}}.$
%Note that $p\leq r'$ (by $r\leq p'$) and hence $w^{1-r}\in A_r$ (by $w\in A_p\subset A_{r'}$).
%	; moreover since $\mathscr{S}(Q_0)$ is sparse with respect to the Lebesgue measure, it it also sparse with respect to $w^{1-r}$ (sparseness with respect to the Lebesgue measure is equivalent with sparseness with respect to any measure in $A_{\infty}$).
\end{proof}

\subsection*{Acknowledgements} We would like to thank the anonymous referee for constructive comments that improved the presentation.

\bibliography{references}
\bibliographystyle{abbr}

\end{document}